\documentclass[draft,12pt]{amsart}

\usepackage{amssymb,amscd}
\usepackage[all]{xy}

\usepackage{latexsym}

\usepackage{amsfonts}
\usepackage{amssymb}
\usepackage{mathrsfs}

\usepackage{graphicx}

\newcommand{\bneq}{\mathbin{\rotatebox[origin=c]{90}{$\neq$}}}


\DeclareMathOperator{\bv}{\boldsymbol{v}}
\DeclareMathOperator{\bw}{\boldsymbol{w}}
\newcommand{\norm}[1]{\lVert#1\rVert}



\theoremstyle{plain}
\newtheorem{theorem}{Theorem}[section]
\newtheorem{corollary}[theorem]{Corollary}
\newtheorem{lemma}[theorem]{Lemma}
\newtheorem{proposition}[theorem]{Proposition}

\theoremstyle{plain}


%
%
%
%
\topmargin=-2cm
\textheight=25cm
\textwidth=15cm
\headsep=.6cm
\flushbottom
\oddsidemargin=1.2cm
\evensidemargin=1.2cm
\voffset =-10pt


\def\R{{\rm I\kern-2ptR}}
\def\N{{\rm I\kern-2ptN}}

\def\qed{\hskip .6em \raise1.8pt\hbox{\vrule height4pt
width6pt depth2pt}}
\def\qedd{\hskip .4em \raise1.8pt\hbox{\vrule height3pt
width5pt depth1.8pt}}
\def\sq{\hskip .6em \raise1.8pt\hbox{\vrule
height4pt width6pt depth2pt}}

\def\leaderfill{\leaders\hbox to 1em{\hss.\hss}\hfill}



\begin{document}

\title[]{Closed ideals in $\mathcal{L}(X)$ and $\mathcal{L}(X^*)$ when $X$ contains certain copies of $\ell_p$ and $c_0$}



\author{Ben Wallis}
\address{Department of Mathematical Sciences, Northern Illinois University, DeKalb, IL 60115}
\email{wallis@math.niu.edu}

\begin{abstract}Suppose $X$ is a real or complexified Banach space containing a complemented copy of $\ell_p$, $p\in(1,2)$, and a copy (not necessarily complemented) of either $\ell_q$, $q\in(p,\infty)$, or $c_0$.  Then $\mathcal{L}(X)$ and $\mathcal{L}(X^*)$ each admit continuum many closed ideals.  If in addition $q\geq p'$, $\frac{1}{p}+\frac{1}{p'}=1$, then the closed ideals of $\mathcal{L}(X)$ and $\mathcal{L}(X^*)$ each fail to be linearly ordered.  We obtain additional results in the special cases of $\mathcal{L}(\ell_1\oplus\ell_q)$ and $\mathcal{L}(\ell_p\oplus c_0)$, $1<p<2<q<\infty$.\end{abstract}

\maketitle
\theoremstyle{plain}

\section{Introduction}

The past decade has seen some dramatic new results on the closed ideal structure of the algebra of operators $\mathcal{L}(\ell_p\oplus\ell_q)$, $1<p<q<\infty$.  Some important new ideals in that algebra were described in \cite{SSTT07} and \cite{Sc12} when $1<p<2<q<\infty$, and then in \cite{SZ14} the authors showed that it contains infinitely many closed ideals for all choices $1<p<q<\infty$.  In this paper, we find that only small changes to the proofs are necessary to adapt one of the main results in \cite{SZ14}, yielding infinitely many---indeed, continuum many---new closed ideals in $\mathcal{L}(\ell_p\oplus c_0)$, for $1<p<2$, and in $\mathcal{L}(\ell_1\oplus\ell_q)$ for $2<q<\infty$.  We then adapt results from \cite{SSTT07} to find additional information on the closed ideal structure of these operator algebras.

In the process of doing all this, we noticed that the proof methods remain valid for much more general cases, yielding the two main Theorems \ref{main1} and \ref{main2} below.  Before stating these, let us recall some definitions and notation.  If $X$ is a real Banach space then $X_\mathbb{C}$ denotes its complexification.  Recall that an operator $T\in\mathcal{L}(X,Y)$, $X$ and $Y$ Banach spaces, is said to be {\bf finitely strictly singular} ($\mathcal{FSS}$) just in case for every $\epsilon>0$ there exists $n\in\mathbb{Z}^+$ such that for every $n$-dimensional subspace $E\subseteq X$ we have $\inf_{x\in E}\norm{Tx}<\epsilon\norm{x}$.  (In the literature, finitely strictly singular operators are sometimes called {\it superstrictly singular} operators.)  Milman proved in \cite{Mi69} that class $\mathcal{FSS}$ forms a norm-closed operator ideal.  Let us also define, for an operator $T\in\mathcal{L}(X,Y)$ and each $n\in\mathbb{Z}^+$,
\[a_n(T)=\sup\inf\left\{\norm{Tx+E}_{Y/E}:x\in X,\norm{x}=1\right\},\]
where the ``sup'' is taken over all closed subspaces $E$ of $Y$ such that $\dim(Y/E)=n$.  The operator $T$ is then said to be {\bf superstrictly cosingular} ($\mathcal{SSCS}$) just in case $\lim_{n\to\infty}a_n(T)=0$.  It can be shown (cf., e.g., \cite[Theorem 4]{Pl04}) that $\mathcal{SSCS}$ is in full duality with $\mathcal{FSS}$, and hence forms a norm-closed operator ideal.  More precisely, an operator $T$ is class $\mathcal{SSCS}$ (resp. $\mathcal{FSS}$) if and only if $T^*$ is class $\mathcal{FSS}$ (resp. $\mathcal{SSCS}$).

Our first main result, then, is as follows.

\begin{theorem}\label{main1}Let $p\in(1,2)$ and $q\in(p,\infty)$.  Suppose $X$ is a real Banach space containing a complemented copy of $\ell_p$, and a copy of either $\ell_q$ or $c_0$ (which need not be complemented).  Then $\mathcal{L}(X)$ and $\mathcal{L}(X_\mathbb{C})$ each admit a chain, with cardinality of the continuum, of closed ideals contained in $\mathcal{FSS}(X)$ and $\mathcal{FSS}(X_\mathbb{C})$, respectively.  Furthermore, $\mathcal{L}(X^*)$ and $\mathcal{L}(X_\mathbb{C}^*)$ each admit a chain, with cardinality of the continuum, of closed ideals contained in $\mathcal{SSCS}(X^*)$ and $\mathcal{SSCS}(X_\mathbb{C}^*)$, respectively.\end{theorem}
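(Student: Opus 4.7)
The plan is to adapt the construction of \cite{SZ14}, which produced continuum many closed ideals in $\mathcal{L}(\ell_p\oplus\ell_q)$ via a totally ordered family of finitely strictly singular operators $T_\alpha:\ell_p\to\ell_q$ indexed by a set of cardinality $2^{\aleph_0}$, whose generated closed ideals form a strictly increasing chain inside $\mathcal{FSS}(\ell_p\oplus\ell_q)$. A parallel construction yields an analogous family of FSS operators $\ell_p\to c_0$ for the other case. I would then transport these ideals into $\mathcal{L}(X)$ using the complemented embedding of $\ell_p$ and the given (possibly uncomplemented) embedding of $V\in\{\ell_q,c_0\}$, and deduce the complexification and dual statements by transfer arguments.

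For the transport, let $j:\ell_p\to X$ denote the isomorphic inclusion, $P:X\to\ell_p$ the associated projection (so $Pj=\mathrm{id}_{\ell_p}$), and $i:V\to X$ the given embedding. For each parameter $\alpha$, I would set $\tilde T_\alpha:=iT_\alpha P\in\mathcal{L}(X)$; since $\mathcal{FSS}$ is an operator ideal, $\tilde T_\alpha\in\mathcal{FSS}(X)$. The candidate chain consists of $\mathcal{J}_\alpha:=$ the norm-closed two-sided ideal of $\mathcal{L}(X)$ generated by $\tilde T_\alpha$, which automatically lies in $\mathcal{FSS}(X)$. The nested inclusions $\mathcal{J}_\alpha\subseteq\mathcal{J}_\beta$ for $\alpha<\beta$ follow from the corresponding inclusions in $\mathcal{L}(\ell_p\oplus V)$ after pre- and post-composing with $P$ and $i$.

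The crucial step, and the main expected obstacle, is to verify the strict inclusions $\mathcal{J}_\alpha\subsetneq\mathcal{J}_\beta$. Since $V$ is not assumed complemented in $X$, there is no canonical projection $X\to V$ with which to convert an approximation $\tilde T_\beta=\lim_n\sum_k A_{n,k}\tilde T_\alpha B_{n,k}$ directly into a factorization of $T_\beta$ over $T_\alpha$ inside $\mathcal{L}(\ell_p,V)$. Overcoming this should rely on two ingredients: the total incomparability of $\ell_p$ with $V$, which via a standard perturbation argument forces the outputs of the approximating sums essentially into $i(V)$ modulo a quantifiably small FSS perturbation; and the specific quantitative ``signatures'' of the operators $T_\alpha$ of \cite{SZ14} on block bases of $\ell_p$, which are preserved by $i$ and cannot be reproduced by ideal operations on $T_\alpha$ when $\alpha<\beta$. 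Combining these should pull back the strict inclusion from \cite{SZ14} to the setting of $\mathcal{L}(X)$.

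For the complexification $X_{\mathbb C}$ the argument runs verbatim once one observes that $X_{\mathbb C}$ contains a complemented copy of $(\ell_p)_{\mathbb C}$ and a copy of $V_{\mathbb C}$. For the dual statements, I would consider the adjoints $\tilde T_\alpha^{*}=P^{*}T_\alpha^{*}i^{*}\in\mathcal{SSCS}(X^{*})$---membership in $\mathcal{SSCS}$ follows from the full duality of $\mathcal{FSS}$ and $\mathcal{SSCS}$ recalled in the introduction---and let $\mathcal{K}_\alpha\subseteq\mathcal{L}(X^{*})$ be the closed ideal they generate. Using that $P^{*}$ is an isomorphic embedding of $\ell_{p'}$ into $X^{*}$ with left inverse $j^{*}$, while $i^{*}:X^{*}\to V^{*}$ is a quotient map, the chain and strict-inclusion analysis transfers by a symmetric argument, and the $X_{\mathbb C}^{*}$ case is handled identically.
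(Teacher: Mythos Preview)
Your overall strategy---transport the Schlumprecht--Zs\'ak operators into $\mathcal{L}(X)$ via the complemented copy of $\ell_p$ and the embedding of $V$---matches the paper's. You also correctly identify the main obstacle: with $V$ uncomplemented, you cannot pull an approximation $\tilde T_\beta\approx\sum_k A_k\tilde T_\alpha B_k$ in $\mathcal{L}(X)$ back to a factorization in $\mathcal{L}(\ell_p,V)$.

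The gap is in your proposed resolution. The claim that total incomparability of $\ell_p$ with $V$ ``forces the outputs of the approximating sums essentially into $i(V)$ modulo a quantifiably small FSS perturbation'' is not justified and is not obviously true: the operators $A_k\in\mathcal{L}(X)$ can send $i(V)$ anywhere in $X$, and total incomparability controls only the \emph{restriction} of an operator from $V$ to $\ell_p$ (or vice versa), not where a given operator on $X$ lands. There is no evident perturbation argument here.

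The paper sidesteps this entirely. It first decomposes $X=\ell_p\oplus Y$ and proves separately (Proposition~\ref{copy-of-Z}) that $Y$ itself must contain a copy of $\ell_q$ or $c_0$; it then works with closed \emph{subideals} of $\mathcal{L}(\ell_p,Y)$ and lifts them to $\mathcal{L}(X)$ via an order-embedding (Proposition~\ref{order-isomorphism}). The strict inclusions in $\mathcal{L}(\ell_p,Y)$ are obtained not by pulling back to $V$ but by \emph{re-running the SZ14 separation argument directly with $Y$ as the target space} (Lemma~\ref{Theorem-6}): the separating functional $\Phi$ on $\mathcal{L}(\ell_p,Y)$ is built from the biorthogonal functionals in $Y^*$ of the embedded basic sequence $(Je_n)$, which exist and are uniformly bounded by Hahn--Banach regardless of whether the embedded copy is complemented. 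No projection onto $i(V)$ is ever needed. This is the key idea you are missing.

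For the dual statement the paper does not argue ``symmetrically'' either. It uses reflexivity of $\ell_p$ to show that every operator in $\mathcal{L}(Y^*,\ell_{p'})$ is an adjoint of some operator in $\mathcal{L}(\ell_p,Y^{**})$ (Proposition~\ref{predual}); since $Y^{**}\supseteq Y$ still contains the copy of $\ell_q$ or $c_0$, the primal argument applies there and the strict inclusions transfer by taking adjoints. The complexification is handled via explicit transfer lemmas for $[\mathcal{G}_{T_\mathbb{C}}]$ and $\mathcal{FSS}$ (Propositions~\ref{complexification-ideal} and~\ref{complexification-FSS}), rather than by re-running the construction over $\mathbb{C}$.
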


Among the spaces satisfying the conditions of Theorem \ref{main1} are Rosenthal's $X_p$ spaces for $p\in(1,2)\cup(2,\infty)$, defined in \cite{Ro70}.  Indeed, let us consider the generalization of those spaces defined in \cite{Woo75}, denoted $X_{p,r}$ for $1\leq r<p\leq\infty$.  Note that whenever $p\in(2,\infty)$, we have $X_p=X_{p,2}$, and whenever $p\in(1,2)$ we have $X_p=X_{p',2}^*$.  In \cite[Corollary 3.2]{Woo75} it was proved that $X_{p,r}$ always contains complemented copies of $\ell_p$ (or $c_0$, if $p=\infty$) and $\ell_r$.  They are reflexive for all $1<r<p<\infty$, and hence satisfy the conditions of Theorem \ref{main1} in those cases.  The nonreflexive space $X_{\infty,r}$ also satisfies the conditions as long as $r\in(1,2)$.  So do certain Orlicz sequence spaces, for instance the ones described in \cite[Corollary 4.9]{Li73}, which contain complemented subspaces of $\ell_p$ for all $p\in[a,b]$, where $a\in[1,2)$ and $b\in(a,\infty)$.  However, to be sure, not all Orlicz sequence spaces satisfy the conditions, since there exist such spaces, for instance the ones constructed in \cite{LT72}, with no complemented copies of $c_0$ or $\ell_p$ for any $p\in[1,\infty)$.  Most notably, the spaces $\ell_p\oplus c_0$ and $\ell_1\oplus\ell_q$ satisfy the conditions of Theorem \ref{main1} for all $1<p<2<q<\infty$, and we shall discuss them at greater length in section 5.

We obtain the following additional result on the structure of closed ideals under certain similar conditions.

\begin{theorem}\label{main2}Let $1<p<2<p'\leq q<\infty$.  Suppose $X$ is a (real or complex) Banach space containing a complemented copy of $\ell_p$, and also containing a copy of either $\ell_q$ or $c_0$ (not necessarily complemented).  Then each of $\mathcal{L}(X)$ and $\mathcal{L}(X^*)$ contains two incomparable closed ideals.\end{theorem}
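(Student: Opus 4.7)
The plan is to adapt the argument from \cite{SSTT07} that yields two incomparable closed ideals in $\mathcal{L}(\ell_p\oplus\ell_q)$ under the arithmetic condition $p'\leq q$, transferring it to $\mathcal{L}(X)$ via the canonical maps coming from the complemented copy of $\ell_p$ and the embedded copy of $\ell_q$ (or $c_0$). To set up, fix $J_p:\ell_p\to X$ and $Q_p:X\to\ell_p$ with $Q_pJ_p=I_{\ell_p}$, together with an embedding $J_q:\ell_q\to X$ (or $J_q:c_0\to X$). By Pitt's theorem $Q_pJ_q$ is compact, so after replacing $J_q$ by its restriction to a suitable block subspace of $\ell_q$ (isomorphic to $\ell_q$), we may arrange $\|Q_pJ_q\|$ to be as small as we wish, effectively decoupling the two copies inside $X$.

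From \cite{SSTT07} take two operators $M,N\in\mathcal{L}(\ell_p,\ell_q)$ (or $\mathcal{L}(\ell_p,c_0)$) whose construction requires $p'\leq q$ and whose closed ``ideals'' in $\mathcal{L}(\ell_p,\ell_q)$ --- in the sense of closed subspaces stable under left multiplication by $\mathcal{L}(\ell_q)$ and right multiplication by $\mathcal{L}(\ell_p)$ --- are incomparable. Lift them to $\widetilde{M}:=J_qMQ_p$ and $\widetilde{N}:=J_qNQ_p$ in $\mathcal{L}(X)$, and consider the closed two-sided ideals $\mathcal{I}_M:=\overline{[\widetilde{M}]}$ and $\mathcal{I}_N:=\overline{[\widetilde{N}]}$ of $\mathcal{L}(X)$; showing $\widetilde{M}\notin\mathcal{I}_N$ and $\widetilde{N}\notin\mathcal{I}_M$ gives the desired incomparability.

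The natural approach is to define a squeeze $\sigma:\mathcal{L}(X)\to\mathcal{L}(\ell_p,\ell_q)$ that sends $\widetilde{M}\mapsto M$ and $\widetilde{N}\mapsto N$, and then transfer the SSTT07 incomparability: if $\widetilde{M}\in\mathcal{I}_N$ then $\sigma(\widetilde{M})=M$ would lie in the closed ``ideal'' in $\mathcal{L}(\ell_p,\ell_q)$ generated by $N$, contradicting \cite{SSTT07}. The candidate $\sigma(T):=(J_q^{-1}|_{J_q(\ell_q)})\,T\,J_p$ is well-defined on any $T$ whose image lies in $J_q(\ell_q)$ (in particular on $\widetilde{M}$ and $\widetilde{N}$), but not on generic composites $A\widetilde{N}B$ with $A,B\in\mathcal{L}(X)$. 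The main obstacle is exactly this lack of complementation of $J_q(\ell_q)$ in $X$; the workaround exploits both the smallness of $\|Q_pJ_q\|$ from the setup and the fact that the compact operators in $\mathcal{L}(\ell_p,\ell_q)$ lie in the closure of any nonzero ``ideal'' of $\mathcal{L}(\ell_p,\ell_q)$, rewriting each composite $A\widetilde{N}B$ as $J_qCNDQ_p+(\text{compact remainder})$ for suitable $C\in\mathcal{L}(\ell_q)$ and $D\in\mathcal{L}(\ell_p)$, and then passing to the limit. The $\mathcal{L}(X^*)$ case is handled in parallel using the complemented copy $\ell_{p'}\cong(\ell_p)^*$ in $X^*$, the quotient map $J_q^*:X^*\to\ell_{q'}$, and the adjoint operators $M^*,N^*$, with the corresponding SSTT07 incomparability transferred via the $\mathcal{FSS}$--$\mathcal{SSCS}$ duality recalled in the introduction.
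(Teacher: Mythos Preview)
Your outline has the right instinct---lift incomparable objects from $\mathcal{L}(\ell_p,\ell_q)$ into $\mathcal{L}(X)$---but the mechanism you propose for transferring incomparability back does not work, and the paper's argument circumvents the obstacle you identify in a quite different way.

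The gap is in your ``squeeze'' step. You correctly note that $\sigma(T)=(J_q^{-1}|_{J_q(\ell_q)})TJ_p$ is undefined on a generic composite $A\widetilde{N}B$, and your proposed fix---writing $A\widetilde{N}B=J_qCNDQ_p+(\text{compact remainder})$---is not justified. For arbitrary $A\in\mathcal{L}(X)$ the map $AJ_q:\ell_q\to X$ need not be close to anything of the form $J_qC$ with $C\in\mathcal{L}(\ell_q)$; without complementation there is no reason a bounded operator into $X$ should be approximable by one landing in $J_q(\ell_q)$ modulo compacts. Making $\|Q_pJ_q\|$ small only controls the $\ell_p$-component, not the unknown complement. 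So as written the argument does not close.

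The paper avoids the squeeze entirely through two devices. First, it decomposes $X=\ell_p\oplus Y$ and proves a clean order-embedding $\Psi(\mathcal{I})=[\mathcal{G}_{\mathcal{I}}](\ell_p\oplus Y)$ from closed subideals of $\mathcal{L}(\ell_p,Y)$ into closed ideals of $\mathcal{L}(X)$ (Proposition~\ref{order-isomorphism}); this reduces the problem to finding incomparable closed subideals in $\mathcal{L}(\ell_p,Y)$, where $Y$ is the \emph{complement} of $\ell_p$ in $X$ (which still contains $\ell_q$ or $c_0$ by Proposition~\ref{copy-of-Z}). Second, and crucially, the incomparable subideals chosen are the components $\mathcal{FSS}(\ell_p,Y)$ and $[\mathcal{G}_{\ell_2}](\ell_p,Y)$ of global operator ideals, not ideals generated by single operators. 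The non-inclusion $\mathcal{FSS}\not\subseteq[\mathcal{G}_{\ell_2}]$ at the level of $\mathcal{L}(\ell_p,Y)$ is proved (Theorem~\ref{Hadamard}) by pushing the Hadamard-matrix operator forward into $\ell_\infty$ via the \emph{injectivity} of $\ell_\infty$: one extends $I_{q,\infty}\widetilde{J}^{-1}$ from $J_q(\ell_q)$ to all of $Y$, and this replaces the need for a left inverse of $J_q$. The reverse non-inclusion comes from the Pe\l czy\'nski decomposition operator, which factors through $\ell_2$ but uniformly preserves $\ell_2^n$'s and so is not $\mathcal{FSS}$. The $\mathcal{L}(X^*)$ statement then follows from the same Corollary~\ref{general-diagram} using reflexivity of $\ell_p$ and the $\mathcal{FSS}/\mathcal{SSCS}$ duality, with $\Psi$ again handling the passage to ideals.
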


\noindent Note that, by two ideals being {\it incomparable}, we mean that neither ideal is a subset of the other.  This means in particular that the closed ideals of $\mathcal{L}(X)$ and $\mathcal{L}(X^*)$ in the above Theorem are not linearly ordered.

Let us set forth some notation which shall be used throughout.  For the most part, our notation will be standard, such as appears in \cite{LT77,AA02,AK06}.  However, we shall recall presently some of the most common conventions.  If $1\leq p\leq\infty$ then let $p'\in[1,\infty]$ denote its conjugate, i.e. $\frac{1}{p}+\frac{1}{p'}=1$.   For any set $S$, denote by $|S|$ its cardinality.  For normed spaces $X$ and $Y$, we write $\mathcal{L}(X,Y)$ for the space of all continuous linear operators from $X$ into $Y$.  Indeed, by an {\it operator} we shall always mean a continuous linear operator between normed linear spaces.  We let $\mathcal{K}$ denote the class of all compact operators.  If $A$ is a subset of a Banach space $X$ then we denote by $\overline{A}$ its closure and $[A]$ its closed linear span.  Let us also borrow a piece of terminology from \cite{Sc12}:  If $X$ and $Y$ are Banach spaces, then a linear subspace $\mathcal{J}$ of $\mathcal{L}(X,Y)$ is called a {\bf subideal} just in case whenever $A\in\mathcal{L}(X)$, $B\in\mathcal{L}(Y)$, and $T\in\mathcal{J}$, we have $BTA\in\mathcal{J}$.  (A subideal of $\mathcal{L}(X)$ is called, simply, an {\it ideal}.)  Whenever $\mathcal{A}$ is a set of continuous linear operators, we let
\[\mathcal{G}_\mathcal{A}(X,Y)=\{T\in\mathcal{L}(X,Y):\text{there exists }A\in\mathcal{A}\text{ such that }T\text{ factors through }A\}.\]
In case $\mathcal{A}$ is just a singleton $\{R\}$, we write $\mathcal{G}_R=\mathcal{G}_{\{R\}}$.  For a class $\mathcal{J}$ of continuous linear operators between Banach spaces, we write $[\mathcal{J}]$ for the class whose components are just the closed linear spans of the components of $\mathcal{J}$.  In other words, $[\mathcal{J}](X,Y)=[\mathcal{J}(X,Y)]$.

If $X$ and $Y$ are Banach spaces with respective bases $(x_n)$ and $(y_n)$ such that $(x_n)$ {\it dominates} $(y_n)$, i.e. $\norm{\sum a_ny_n}\leq C\norm{\sum a_nx_n}$ for all $(a_n)\in c_{00}$, then there exists a natural map $I_{X,Y}\in\mathcal{L}(X,Y)$ which we shall call the {\bf formal identity operator}, i.e. the operator satisfying $I_{X,Y}x_n=y_n$ for all $n\in\mathbb{N}$.  Let $I_{p,q}$ denote the formal identity from $\ell_p$ to $\ell_q$, $1\leq p\leq q<\infty$, and let $I_{p,0}$ denote the formal identity from $\ell_p$ to $c_0$ (all with respect to the canonical bases).  We also write $I_{p,\infty}$ and $I_{0,\infty}$ for the operators taking the respective canonical bases of $\ell_p$ and $c_0$ into $\ell_\infty$ in the obvious way.  Milman observed in \cite{Mi70} that $I_{p,q}\in\mathcal{FSS}(\ell_p,\ell_q)$ and $I_{p,0}\in\mathcal{FSS}(\ell_p,c_0)$ for all $1\leq p<q\leq\infty$.

If $1\leq p<\infty$, let us write
\[Z_p:=\left(\bigoplus_{n=1}^\infty\ell_2^n\right)_{\ell_p},\]
and
\[Z_\infty :=\left(\bigoplus_{n=1}^\infty\ell_2^n\right)_{c_0}.\]
The spaces $Z_p$, $1\leq p\leq\infty$, each have a natural basis which is formed by stringing together the standard bases of $\ell_2^n$.  Let us call this natural basis the {\bf canonical basis} for $Z_p$.  Pe\l czy\'{n}ski proved (cf., e.g., \cite[p73]{LT77}) that for any $1<p<\infty$ there exists an isomorphism
\[D_p:\ell_p\to Z_p=\left(\bigoplus_{n=1}^\infty\ell_2^n\right)_{\ell_p}.\]
(Note that although this means $Z_p\cong\ell_p$, the canonical bases of these spaces are quite different.)  If $1\leq p\leq q<\infty$, we then denote by
\[I_{2,p,q}:Z_p=\left(\bigoplus_{n=1}^\infty\ell_2^n\right)_{\ell_p}\to\left(\bigoplus_{n=1}^\infty\ell_2^n\right)_{\ell_q}=Z_q\]
and
\[I_{2,p,0}:Z_p=\left(\bigoplus_{n=1}^\infty\ell_2^n\right)_{\ell_p}\to\left(\bigoplus_{n=1}^\infty\ell_2^n\right)_{c_0}=Z_\infty,\]
the formal identity operators between corresponding canonical basis vectors.

Unfortunately, no such Pe\l czy\'{n}ski decomposition exists for $\ell_1$ or $c_0$.  However, we can nevertheless construct a continuous linear embedding as follows.  Recall that for every $n\in\mathbb{Z}^+$ there exists $k_n\in\mathbb{Z}^+$ and a 2-embedding $\theta_n:\ell_2^n\to\ell_\infty^{k_n}$ (cf., e.g., \cite[Example 11.1.2]{AK06}).  Then we write
\[\theta=\bigoplus_{n=1}^\infty\theta_n:Z_\infty =\left(\bigoplus_{n=1}^\infty\ell_2^n\right)_{c_0}\to\left(\bigoplus_{n=1}^\infty\ell_\infty^{k_n}\right)_{c_0}=c_0.\]
Let us fix $D_p$ ($1<p<\infty$), $(k_n)_{n=1}^\infty$, $(\theta_n)_{n=1}^\infty$, and $\theta=\bigoplus_{n=1}^\infty\theta_n$ once and for all.  Then we can define a {\bf $\boldsymbol{(p,q)}$-Pelczyski decomposition operator} as any operator of the form $D_q^{-1}I_{2,p,q}D_p\in\mathcal{L}(\ell_p,\ell_q)$, $1<p<q<\infty$, and a {\bf $\boldsymbol{(p,0)}$-left Pe\l czy\'{n}ski decomposition operator} as any operator of the form $\theta I_{2,p,0}D_p\in\mathcal{L}(\ell_p,c_0)$, $1<p<\infty$.

We organize the remainder of this paper as follows.  Section 2 lays out some preliminaries on how closed subideals in real Banach spaces are related to closed subideals in their complexified counterparts.  This is necessary for section 3, where the main result of \cite{SZ14} is adapted to prove the more general Theorem \ref{main1}.  In section 4, we modify a result from \cite{SSTT07} to prove Theorem \ref{main2}.  Finally, in section 5, we prove some additional results for the special cases $\mathcal{L}(\ell_p\oplus c_0)$ and $\mathcal{L}(\ell_1\oplus\ell_q)$, $1<p<2<q<\infty$, which are also based on arguments in \cite{SSTT07}.

\section{Closed subideals in complexified Banach spaces}

The main result in \cite{SZ14} was proved for real Banach spaces, and so our adaptation here will extend to complex Banach spaces via a complexification procedure.  For this reason, let us recall some facts about the {\bf complexification} $X_\mathbb{C}$ of a real Banach space $X$, defined as the Banach space
\[X_\mathbb{C}:=X\oplus iX,\]
and endowed with vector space operations
\[(x_1+iy_1)+(x_2+iy_2)=(x_1+x_2)+i(y_1+y_2)\]
and
\[(\alpha+i\beta)(x+iy)=(\alpha x-\beta y)+i(\beta x+\alpha y),\]
as well as the norm
\[\norm{x+iy}_{X_\mathbb{C}}:=\sup_{\phi\in[0,2\pi]}\norm{x\cos\phi+y\sin\phi}_X.\]
Notice that this means
\begin{equation}\label{complexification-l1}\frac{1}{2}\left(\norm{x}_X+\norm{y}_X\right)\leq\norm{x+iy}_{X_\mathbb{C}}\leq\norm{x}_X+\norm{y}_X\end{equation}
so that $x_j+iy_j\to x+iy$ in $X_\mathbb{C}$ if and only if $x_j\to x$ and $y_j\to y$ in $X$ (cf., e.g., \cite[pp5-6]{AA02}).  If $T\in\mathcal{L}(X,Y)$ is a continuous linear operator between Banach spaces $X$ and $Y$, we can consider its {\bf complexification} $T_\mathbb{C}\in\mathcal{L}(X_\mathbb{C},Y_\mathbb{C})$ defined by
\[T_\mathbb{C}(x_1+ix_2)=Tx_1+iTx_2.\]
In this case, $\norm{T_\mathbb{C}}=\norm{T}$ (cf., e.g., \cite[Lemma 1.7]{AA02}), and we can write
\[T_\mathbb{C}=\begin{bmatrix}T&0\\0&T\end{bmatrix},\]
where we view elements of $X_\mathbb{C}$ and $Y_\mathbb{C}$ as $1\times 2$ matrices, in the obvious way.  In fact, if $\mathcal{T}\in\mathcal{L}(X_\mathbb{C},Y_\mathbb{C})$ then there exist operators $R,S\in\mathcal{L}(X,Y)$ such that
\[\mathcal{T}=R_\mathbb{C}+iS_\mathbb{C}=\begin{bmatrix}R&-S\\S&R\end{bmatrix}\]
(cf., e.g., \cite[Theorem 1.8]{AA02}).

We now give some basic results about complexification of ideals.   If $\mathcal{A}\subseteq\mathcal{L}(X,Y)$ for real Banach spaces $X$ and $Y$ then we write
\[\mathcal{A}_\mathbb{C}:=\{R_\mathbb{C}:R\in\mathcal{A}\}.\]

\begin{proposition}\label{complexification-span}Let $X$ and $Y$ be real Banach spaces, and suppose $\mathcal{A}$ is a subset of $\mathcal{L}(X,Y)$.  Then
\[\text{span}(\mathcal{A}_\mathbb{C})=\left\{\begin{bmatrix}R&-S\\S&R\end{bmatrix}:R,S\in\text{span}(\mathcal{A})\right\}\]
and
\[[\mathcal{A}_\mathbb{C}]=\overline{\text{span}}(\mathcal{A}_\mathbb{C})=\left\{\begin{bmatrix}R&-S\\S&R\end{bmatrix}:R,S\in[\mathcal{A}]\right\}.\]\end{proposition}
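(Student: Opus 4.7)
The plan is to establish the two identities in parallel: the first by straightforward real-linear algebra, the second by lifting this to the closure via a norm equivalence between $\norm{R_\mathbb{C}+iS_\mathbb{C}}$ and the individual norms $\norm{R},\norm{S}$.

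For the span identity, the key point is the algebraic identity $(\alpha+i\beta)R_\mathbb{C}=(\alpha R)_\mathbb{C}+i(\beta R)_\mathbb{C}$, which follows directly from the matrix representation recalled just before the proposition. Thus any finite complex linear combination $\sum_j(\alpha_j+i\beta_j)(A_j)_\mathbb{C}$ of elements of $\mathcal{A}_\mathbb{C}$ rearranges to $R_\mathbb{C}+iS_\mathbb{C}$ with $R=\sum_j\alpha_jA_j$ and $S=\sum_j\beta_jA_j$ both in $\text{span}(\mathcal{A})$. The reverse inclusion is equally immediate: given $R,S\in\text{span}(\mathcal{A})$, expand them in $\mathcal{A}$ and combine their complexifications, using factors of $i$ on the $S$ side as needed.

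For the closed-span identity, the crucial technical step is the norm equivalence
\[\max(\norm{R},\norm{S})\leq\norm{R_\mathbb{C}+iS_\mathbb{C}}\leq\norm{R}+\norm{S}.\]
The upper bound is the triangle inequality together with the fact (recalled earlier from \cite{AA02}) that $\norm{T_\mathbb{C}}=\norm{T}$, which immediately yields $\norm{iS_\mathbb{C}}=\norm{S_\mathbb{C}}=\norm{S}$. For the lower bound I would evaluate $R_\mathbb{C}+iS_\mathbb{C}$ at a real vector $x\in X\subseteq X_\mathbb{C}$, noting $(R_\mathbb{C}+iS_\mathbb{C})(x)=Rx+iSx$, and then use the complexification norm $\norm{u+iv}_{Y_\mathbb{C}}=\sup_\phi\norm{u\cos\phi+v\sin\phi}_Y$ specialized at $\phi=0$ or $\phi=\pi/2$ to extract $\norm{Rx}_Y$ or $\norm{Sx}_Y$. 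Taking the supremum over $\norm{x}=1$ gives the claimed lower bound.

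Equipped with this equivalence, the inclusion $\supseteq$ is immediate from the upper bound: if $R_n\to R$ and $S_n\to S$ with $R_n,S_n\in\text{span}(\mathcal{A})$, then $(R_n)_\mathbb{C}+i(S_n)_\mathbb{C}\to R_\mathbb{C}+iS_\mathbb{C}$ in $\mathcal{L}(X_\mathbb{C},Y_\mathbb{C})$. For $\subseteq$, pick a sequence $T_n\in\text{span}(\mathcal{A}_\mathbb{C})$ with $T_n\to T\in[\mathcal{A}_\mathbb{C}]$, use the already-established span identity to write $T_n=(R_n)_\mathbb{C}+i(S_n)_\mathbb{C}$ with $R_n,S_n\in\text{span}(\mathcal{A})$, and invoke the lower bound to conclude that both $(R_n)$ and $(S_n)$ are Cauchy. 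They therefore converge to some $R,S\in[\mathcal{A}]$, and continuity of the complexification map yields $T=R_\mathbb{C}+iS_\mathbb{C}$. The main obstacle—if one calls it that—is verifying the lower norm bound cleanly; once it is in hand, uniqueness of the decomposition $T=R_\mathbb{C}+iS_\mathbb{C}$ and the Cauchy property of the component sequences come for free, and the rest of the argument is formal.
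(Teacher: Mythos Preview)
Your proof is correct and rests on the same core idea as the paper's: the lower bound $\max(\norm{R},\norm{S})\leq\norm{R_\mathbb{C}+iS_\mathbb{C}}$ (the paper uses the weaker $\tfrac{1}{2}(\norm{R}+\norm{S})$ variant via inequality \eqref{complexification-l1}) to extract componentwise control. The packaging differs slightly: the paper writes the limit $\mathcal{R}$ a priori in matrix form (invoking the cited fact from \cite{AA02} that every operator between complexifications has such a representation) and then argues by contradiction that the components lie in $[\mathcal{A}]$, whereas you isolate the norm equivalence as a standalone estimate and use it to show the component sequences $(R_n),(S_n)$ are Cauchy, constructing the limiting $R,S$ directly. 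Your route is marginally cleaner in that it avoids appealing to the general decomposition theorem for operators on complexifications, but the two arguments are essentially equivalent.
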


\begin{proof}Notice that for all $\alpha,\beta\in\mathbb{R}$ and $R\in\text{span}(\mathcal{A})$ we have
\[(\alpha+i\beta)R_\mathbb{C}=\begin{bmatrix}\alpha R&-\beta R\\\beta R&\alpha R\end{bmatrix}.\]
Also, the set
\[\left\{\begin{bmatrix}R&-S\\S&R\end{bmatrix}:R,S\in\text{span}(\mathcal{A})\right\}\]
is clearly closed under addition.  This shows that
\[\text{span}(\mathcal{A}_\mathbb{C})\subseteq\left\{\begin{bmatrix}R&-S\\S&R\end{bmatrix}:R,S\in\text{span}(\mathcal{A})\right\}.\]
Notice that if $R=\sum_{j=1}^m\alpha_jR_j$ and $S=\sum_{k=1}^n\beta_kS_k$ for $\alpha_j,\beta_k\in\mathbb{R}$ and $R_j,S_k\in\mathcal{A}$ then
\[\begin{bmatrix}R&-S\\S&R\end{bmatrix}=\sum_{j=1}^m\alpha_j(R_j)_\mathbb{C}+i\sum_{k=1}^n\beta_k(S_k)_\mathbb{C},\]
which shows that the reverse inequality also holds.

Next, suppose $\mathcal{R}_j\to\mathcal{R}$ for $(\mathcal{R}_j)_{j=1}^\infty\subseteq\text{span}(\mathcal{A}_\mathbb{C})$.  Write
\[\mathcal{R}_j=\begin{bmatrix}R_j&-S_j\\S_j&R_j\end{bmatrix}\;\;\;\text{ and }\;\;\;\mathcal{R}=\begin{bmatrix}R&-S\\S&R\end{bmatrix}\]
for $R_j,S_j\in\text{span}(\mathcal{A})$ and $R,S\in\mathcal{L}(X,Y)$.  Suppose towards a contradiction that $R\notin[\mathcal{A}]$.  Then we can find $\epsilon>0$ such that $\norm{R_j-R}>\epsilon$ for all $j\in\mathbb{Z}^+$.  Hence, there exists $x_j\in X$ such that $\norm{(R_j-R)x_j}_Y\geq\epsilon\norm{x_j}_X$.  However, due to
\[\begin{bmatrix}(R_j-R)&-(S_j-S)\\(S_j-S)&(R_j-R)\end{bmatrix}\begin{bmatrix}x_j\\0\end{bmatrix}=\begin{bmatrix}(R_j-R)x_j\\(S_j-S)x_j\end{bmatrix},\]
we have
\[\norm{(\mathcal{R}_j-\mathcal{R})(x_j+i0)}_{Y_\mathbb{C}}\geq\frac{1}{2}\norm{(R_j-R)x_j}_Y\geq\frac{\epsilon}{2}\norm{x_j}_X=\frac{\epsilon}{2}\norm{x_j+i0}_{X_\mathbb{C}},\]
contradicting the fact that $\mathcal{R}_j\to\mathcal{R}$.  Thus, $R\in[\mathcal{A}]$, and an analogous argument shows that $S\in[\mathcal{A}]$ as well.  It follows that
\[[\mathcal{A}_\mathbb{C}]\subseteq\left\{\begin{bmatrix}R&-S\\S&R\end{bmatrix}:R,S\in[\mathcal{A}]\right\}.\]
The reverse inequality is even more obvious, and we are done.\end{proof}

\begin{proposition}\label{complexification-ideal}Let $W$, $X$, $Y$, and $Z$ be real Banach spaces, and let $T\in\mathcal{L}(W,Z)$.  Then
\[[\mathcal{G}_{T_\mathbb{C}}](X_\mathbb{C},Y_\mathbb{C})=\left\{\begin{bmatrix}R&-S\\S&R\end{bmatrix}:R,S\in[\mathcal{G}_T](X,Y)\right\}.\]\end{proposition}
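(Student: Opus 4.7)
The plan is to reduce the statement to Proposition \ref{complexification-span} applied to $\mathcal{A}=\mathcal{G}_T(X,Y)$. That proposition identifies
\[[(\mathcal{G}_T(X,Y))_\mathbb{C}]=\left\{\begin{bmatrix}R&-S\\S&R\end{bmatrix}:R,S\in[\mathcal{G}_T](X,Y)\right\},\]
so everything reduces to showing $[\mathcal{G}_{T_\mathbb{C}}](X_\mathbb{C},Y_\mathbb{C})=[(\mathcal{G}_T(X,Y))_\mathbb{C}]$, which I verify by two matching inclusions.

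For the inclusion ``$\supseteq$,'' I observe that complexification respects factorization through $T$: if $U=BTA\in\mathcal{G}_T(X,Y)$ with $A\in\mathcal{L}(X,W)$ and $B\in\mathcal{L}(Z,Y)$, then $U_\mathbb{C}=B_\mathbb{C}T_\mathbb{C}A_\mathbb{C}\in\mathcal{G}_{T_\mathbb{C}}(X_\mathbb{C},Y_\mathbb{C})$. Hence $(\mathcal{G}_T(X,Y))_\mathbb{C}\subseteq\mathcal{G}_{T_\mathbb{C}}(X_\mathbb{C},Y_\mathbb{C})$, and passing to closed linear spans yields the desired inclusion.

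For the inclusion ``$\subseteq$,'' it suffices to show that every $U\in\mathcal{G}_{T_\mathbb{C}}(X_\mathbb{C},Y_\mathbb{C})$ already lies in the right-hand set, because Proposition \ref{complexification-span} guarantees the right-hand set is closed in $\mathcal{L}(X_\mathbb{C},Y_\mathbb{C})$. Pick such a $U=\mathcal{B}T_\mathbb{C}\mathcal{A}$ with $\mathcal{A}\in\mathcal{L}(X_\mathbb{C},W_\mathbb{C})$ and $\mathcal{B}\in\mathcal{L}(Z_\mathbb{C},Y_\mathbb{C})$, and use the decomposition from \cite[Theorem 1.8]{AA02} recalled just before Proposition \ref{complexification-span} to write $\mathcal{A}=\begin{bmatrix}A_1&-A_2\\A_2&A_1\end{bmatrix}$ and $\mathcal{B}=\begin{bmatrix}B_1&-B_2\\B_2&B_1\end{bmatrix}$ for suitable $A_1,A_2\in\mathcal{L}(X,W)$ and $B_1,B_2\in\mathcal{L}(Z,Y)$. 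Block-matrix multiplication against $T_\mathbb{C}=\begin{bmatrix}T&0\\0&T\end{bmatrix}$ then gives $U=\begin{bmatrix}R&-S\\S&R\end{bmatrix}$ with $R=B_1TA_1-B_2TA_2$ and $S=B_1TA_2+B_2TA_1$. Each of the four summands $B_iTA_j$ belongs to $\mathcal{G}_T(X,Y)$, so $R,S\in\text{span}(\mathcal{G}_T(X,Y))\subseteq[\mathcal{G}_T](X,Y)$, as required.

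I do not expect a serious obstacle; the only substantive step is the $2\times2$ block product above, and the only subtlety worth flagging is that no separate closure step is needed for the ``$\subseteq$'' inclusion, since Proposition \ref{complexification-span} has already established that the target set is closed.
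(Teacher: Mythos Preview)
Your proof is correct and follows essentially the same approach as the paper's: both directions hinge on the block-matrix computation $\mathcal{B}T_\mathbb{C}\mathcal{A}=\begin{bmatrix}R&-S\\S&R\end{bmatrix}$ with $R=B_1TA_1-B_2TA_2$ and $S=B_1TA_2+B_2TA_1$, and both reduce the closure step to Proposition~\ref{complexification-span}. Your organization is slightly more streamlined---you state the reduction to $[\mathcal{G}_{T_\mathbb{C}}]=[(\mathcal{G}_T)_\mathbb{C}]$ up front and dispatch the reverse inclusion via $(BTA)_\mathbb{C}=B_\mathbb{C}T_\mathbb{C}A_\mathbb{C}$ rather than writing out sums---but the substance is identical.
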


\begin{proof}Let $\mathcal{T}\in\mathcal{G}_{T_\mathbb{C}}(X_\mathbb{C},Y_\mathbb{C})$, and write $\mathcal{T}=\mathcal{S}T_\mathbb{C}\mathcal{R}$ for
\[\mathcal{R}=\begin{bmatrix}R_1&-R_2\\R_2&R_1\end{bmatrix}\in\mathcal{L}(X_\mathbb{C},W_\mathbb{C})\;\;\;\text{ and }\;\;\;\mathcal{S}=\begin{bmatrix}S_1&-S_2\\S_2&S_1\end{bmatrix}\in\mathcal{L}(Z_\mathbb{C},Y_\mathbb{C}).\]
Then
\[\mathcal{T}=\begin{bmatrix}(S_1TR_1-S_2TR_2)&-(S_1TR_2+S_2TR_1)\\(S_1TR_2+S_2TR_1)&(S_1TR_1-S_2TR_2)\end{bmatrix}\]
so that
\[\mathcal{G}_{T_\mathbb{C}}(X_\mathbb{C},Y_\mathbb{C})\subseteq\left\{\begin{bmatrix}R&-S\\S&R\end{bmatrix}:R,S\in\text{span}\left(\mathcal{G}_T(X,Y)\right)\right\}.\]
By Proposition \ref{complexification-span} we now have
\[\mathcal{G}_{T_\mathbb{C}}(X_\mathbb{C},Y_\mathbb{C})\subseteq\text{span}\left(\left(\mathcal{G}_T(X,Y)\right)_\mathbb{C}\right)\]
and hence, applying Proposition \ref{complexification-span} once more,
\[[\mathcal{G}_{T_\mathbb{C}}](X_\mathbb{C},Y_\mathbb{C})\subseteq[\left(\mathcal{G}_T(X,Y)\right)_\mathbb{C}]=\left\{\begin{bmatrix}R&-S\\S&R\end{bmatrix}:R,S\in[\mathcal{G}_T](X,Y)\right\}.\]
To see the reverse inequality, suppose $R,S\in\text{span}(\mathcal{G}_T(X,Y))$, and write $R=\sum_{j=1}^m\alpha_jB_jTA_j$ and $S=\sum_{k=1}^n\beta_kD_kTC_k$ for $A_j,C_k\in\mathcal{L}(X,W)$ and $B_j,D_k\in\mathcal{L}(Z,Y)$.  Notice that
\[\begin{bmatrix}R&-S\\S&R\end{bmatrix}=\sum_{j=1}^m\alpha_j(B_j)_\mathbb{C}T_\mathbb{C}(A_j)_\mathbb{C}+i\sum_{k=1}^n\beta_k(D_k)_\mathbb{C}T_\mathbb{C}(C_k)_\mathbb{C}\in\text{span}\left(\mathcal{G}_{T_\mathbb{C}}(X_\mathbb{C},Y_\mathbb{C})\right).\]
If follows that
\[\left\{\begin{bmatrix}R&-S\\S&R\end{bmatrix}:R,S\in\text{span}\left(\mathcal{G}_T(X,Y)\right)\right\}\subseteq\text{span}\left(\mathcal{G}_{T_\mathbb{C}}(X_\mathbb{C},Y_\mathbb{C})\right).\]
Let us define $\mathcal{A}=\text{span}(\mathcal{G}_T(X,Y))$.  Thus, again applying Proposition \ref{complexification-span} successively we get
\begin{multline*}\left\{\begin{bmatrix}R&-S\\S&R\end{bmatrix}:R,S\in[\mathcal{G}_T](X,Y)\right\}=\left\{\begin{bmatrix}R&-S\\S&R\end{bmatrix}:R,S\in[\mathcal{A}]\right\}\\=[\mathcal{A}_\mathbb{C}]=[\text{span}(\mathcal{A}_\mathbb{C})]=\left[\left\{\begin{bmatrix}R&-S\\S&R\end{bmatrix}:R,S\in\text{span}(\mathcal{A})\right\}\right]\\=\left[\left\{\begin{bmatrix}R&-S\\S&R\end{bmatrix}:R,S\in\mathcal{A}\right\}\right]\\=\left[\left\{\begin{bmatrix}R&-S\\S&R\end{bmatrix}:R,S\in\text{span}\left(\mathcal{G}_T(X,Y)\right)\right\}\right]\subseteq[\text{span}\left(\mathcal{G}_{T_\mathbb{C}}(X_\mathbb{C},Y_\mathbb{C})\right)]\\=[\mathcal{G}_{T_\mathbb{C}}](X_\mathbb{C},Y_\mathbb{C}).\end{multline*}
\end{proof}

The following is almost certainly known to specialists, but we will provide a short proof for completeness.

\begin{proposition}\label{complexification-FSS}Let $X$ and $Y$ be real Banach spaces, and let $R\in\mathcal{L}(X,Y)$.  Then $R\in\mathcal{FSS}(X,Y)$ if and only if $R_\mathbb{C}\in\mathcal{FSS}(X_\mathbb{C},Y_\mathbb{C})$.\end{proposition}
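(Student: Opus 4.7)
The plan is to establish the two implications separately, each via a different route.

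For the backward direction, suppose $R_\mathbb{C}\in\mathcal{FSS}(X_\mathbb{C},Y_\mathbb{C})$. Given $\epsilon>0$, pick $n$ from the complex FSS property. For a real $n$-dimensional subspace $F\subseteq X$, the subspace $F+iF\subseteq X_\mathbb{C}$ is complex $n$-dimensional (since $F\cap iF\subseteq X\cap iX=\{0\}$). Complex FSS produces a unit vector $v=x+iy$ with $x,y\in F$ and $\norm{R_\mathbb{C}v}_{Y_\mathbb{C}}<\epsilon$. Evaluating the defining supremum of $\norm{R_\mathbb{C}v}_{Y_\mathbb{C}}$ at $\phi=0$ and $\phi=\pi/2$ gives $\norm{Rx},\norm{Ry}<\epsilon$, while picking $\phi^*\in[0,2\pi]$ attaining the supremum in $\norm{v}_{X_\mathbb{C}}=1$ yields a unit vector $u:=x\cos\phi^*+y\sin\phi^*\in F$ with $\norm{Ru}\leq\norm{Rx}+\norm{Ry}<2\epsilon$, establishing $R\in\mathcal{FSS}(X,Y)$.

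For the forward direction I would invoke the operator ideal structure of $\mathcal{FSS}$ (Milman \cite{Mi69}). Identify $X_\mathbb{C}$ and $Y_\mathbb{C}$ with $X\oplus X$ and $Y\oplus Y$ respectively, viewed as real Banach spaces; inequality \eqref{complexification-l1} shows that the complexification norm is equivalent to the $\ell_\infty$-direct-sum norm up to a factor of $2$. Under this identification, $R_\mathbb{C}$ corresponds to the real operator $R\oplus R$, which decomposes as
\[R\oplus R=J_1R\pi_1+J_2R\pi_2,\]
where $\pi_i$ and $J_i$ are the natural coordinate projections and inclusions (all of norm $\leq 1$ in the $\ell_\infty$ structure). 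Since $R\in\mathcal{FSS}(X,Y)$ and $\mathcal{FSS}$ is closed under composition with bounded operators and under addition, both summands, and hence their sum, lie in $\mathcal{FSS}(X\oplus X,Y\oplus Y)$. Because FSS is preserved under replacing a norm by an equivalent one (the only effect is to rescale $\epsilon$), $R_\mathbb{C}$ is FSS as a real operator on $X_\mathbb{C}\to Y_\mathbb{C}$. Finally, every complex $n$-dimensional subspace of $X_\mathbb{C}$ is in particular a real $2n$-dimensional subspace, so real FSS of $R_\mathbb{C}$ at parameter $N$ entails complex FSS of $R_\mathbb{C}$ at parameter $\lceil N/2\rceil$.

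The main obstacle is in the forward direction: a direct approach---e.g., producing a unit vector in a complex subspace $E\subseteq X_\mathbb{C}$ with small $\norm{R_\mathbb{C}\,\cdot}$ by first finding a unit vector $u$ with small $\norm{Ru}$ in the projection of $E$ onto $X$ and then lifting back to $E$---is frustrated by the fact that the lift may have uncontrolled norm, and the corresponding $\norm{R_\mathbb{C}v}$ could therefore be large. The ideal-theoretic detour through $R\oplus R$ sidesteps this difficulty entirely: it produces real FSS of $R_\mathbb{C}$ essentially for free, and real FSS is a strictly stronger condition than complex FSS.
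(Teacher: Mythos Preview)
Your proof is correct and follows the same overall architecture as the paper: for the forward direction both arguments reduce to showing that $R\oplus R$ is $\mathcal{FSS}$ on the real direct sum and then transfer to $R_\mathbb{C}$ via the norm equivalence \eqref{complexification-l1}; for the backward direction both complexify a real $n$-dimensional subspace $F$ to the complex $n$-dimensional subspace $F+iF$ and extract a real witness from the complex one.

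The differences are in the execution. In the forward direction you invoke Milman's operator-ideal result to get $J_iR\pi_i\in\mathcal{FSS}$ immediately, whereas the paper proves from the definition that $R\oplus 0\in\mathcal{FSS}(X\oplus_{\ell_1}X,Y\oplus_{\ell_1}Y)$ by a short combinatorial argument (case-splitting on whether the first coordinates of a basis are linearly independent). Your route is shorter but imports a nontrivial external fact; the paper's is self-contained. In the backward direction your extraction via the maximizing angle $\phi^*$ is a nice variant of the paper's pigeonhole-style argument (which shows that one of $\sum\alpha_je_j$, $\sum\beta_je_j$ must satisfy the desired inequality). Incidentally, your bound $\norm{Ru}<2\epsilon$ can be sharpened: since $\norm{Ru}=\norm{Rx\cos\phi^*+Ry\sin\phi^*}\leq\sup_\phi\norm{Rx\cos\phi+Ry\sin\phi}=\norm{R_\mathbb{C}v}_{Y_\mathbb{C}}<\epsilon$, you get $\epsilon$ directly without passing through the separate bounds on $\norm{Rx}$ and $\norm{Ry}$.
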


\begin{proof}Suppose $R\in\mathcal{FSS}(X,Y)$.  Let us begin by showing that
\begin{equation}\label{R0-in-FSS}R\oplus 0\in\mathcal{FSS}(X\oplus_{\ell_1}X,Y\oplus_{\ell_1}Y).\end{equation}
Suppose $\delta>0$.  Then there is $n\in\mathbb{Z}^+$ such that for every $n$-dimensional subspace $E$ of $X$ there exists $e\in E$ such that $\norm{Re}_Y<\delta\norm{e}_Y$.  Let $\widetilde{E}$ be an $n$-dimensional subspace of $X\oplus_{\ell_1}X$, and let $(e_j\oplus e'_j)_{j=1}^n$ be a basis for $\widetilde{E}$.  If $(e_j)_{j=1}^n$ is a linearly dependent set then find $(\alpha_j)_{j=1}^n\in\mathbb{R}^n$, not all zero, such that $\sum_{j=1}^n\alpha_je_j=0$ and hence
\[\norm{(R\oplus 0)\sum_{j=1}^n\alpha_j(e_j\oplus e'_j)}_{X\oplus_{\ell_1}X}=0<\delta\norm{\sum_{j=1}^n\alpha_j(e_j\oplus e'_j)}_{Y\oplus_{\ell_1}Y}\neq 0.\]
Otherwise $(e_j)_{j=1}^n$ is linearly independent, in which case it spans an $n$-dimensional subspace $[e_j]_{j=1}^n$ so that there exist $(\alpha_j)_{j=1}^n\in\mathbb{R}^n$ satisfying
\begin{multline*}\norm{(R\oplus 0)\sum_{j=1}^n\alpha_j(e_j\oplus e'_j)}_{X\oplus_{\ell_1}X}=\norm{R\sum_{j=1}^n\alpha_je_j}_Y\\<\delta\norm{\sum_{j=1}^n\alpha_je_j}_X\leq\delta\norm{\sum_{j=1}^n\alpha_j(e_j\oplus e'_j)}_{Y\oplus_{\ell_1}Y}.\end{multline*}
This proves \eqref{R0-in-FSS}, and by a nearly identical argument it follows that also $0\oplus R$ is $\mathcal{FSS}$, and hence that
\begin{equation}\label{RR-in-FSS}R\oplus R=(R\oplus 0)+(0\oplus R)\in\mathcal{FSS}(X\oplus_{\ell_1}X,Y\oplus_{\ell_1}Y).\end{equation}

Now let's show that $R_\mathbb{C}\in\mathcal{FSS}(X_\mathbb{C},Y_\mathbb{C})$.  Select any $\epsilon>0$, and let $N\in\mathbb{Z}^+$ be such that for any $N$-dimensional subspace $\widetilde{E}$ of $X\oplus_{\ell_1}X$ there exists $\tilde{e}\in\widetilde{E}$ such that
\[\norm{(R\oplus R)\tilde{e}}_{Y\oplus_{\ell_1}Y}<\epsilon\norm{\tilde{e}}_{X\oplus_{\ell_1}X}.\]
Let $\widehat{E}$ be any $N$-dimensional subspace of $X_\mathbb{C}$, and let $(f_j+ig_j)_{j=1}^N$ be a basis for $\widehat{E}$.  By \eqref{complexification-l1}, $(f_j\oplus g_j)_{j=1}^N$ must be linearly independent in and hence $[f_j\oplus g_j]_{j=1}^N$ an $N$-dimensional subspace of $X\oplus_{\ell_1}X$.  So by \eqref{RR-in-FSS} we can find nonzero $(\beta_j)_{j=1}^N\in\mathbb{R}^N$ such that
\[\norm{(R\oplus R)\sum_{j=1}^N\beta_j(f_j\oplus g_j)}_{Y\oplus_{\ell_1}Y}<\frac{\epsilon}{2}\norm{\sum_{j=1}^N\beta_j(f_j\oplus g_j)}_{X\oplus_{\ell_1}X}.\]
and hence, together with \eqref{complexification-l1},
\begin{multline*}\norm{R_\mathbb{C}\sum_{j=1}^N\beta_j(f_j+ig_j)}_{Y\oplus_{\ell_1}Y}\leq\norm{(R\oplus R)\sum_{j=1}^N\beta_j(f_j\oplus g_j)}_{Y\oplus_{\ell_1}Y}\\<\frac{\epsilon}{2}\norm{\sum_{j=1}^N\beta_j(f_j\oplus g_j)}_{X\oplus_{\ell_1}X}\leq\epsilon\norm{\sum_{j=1}^N\beta_j(f_j+ig_j)}_{X_\mathbb{C}}.\end{multline*}

For the converse, let us suppose instead that $R_\mathbb{C}\in\mathcal{FSS}(X_\mathbb{C},Y_\mathbb{C})$.  Let $\epsilon>0$, and select $n\in\mathbb{Z}^+$ such that for any $n$-dimensional subspace $\widehat{E}$ of $X_\mathbb{C}$ there exists $e\in\widehat{E}$ such that $\norm{R_\mathbb{C}e}_{Y_\mathbb{C}}<\frac{\epsilon}{2}\norm{e}_{X_\mathbb{C}}$.  Let $E$ be any $n$-dimensional subspace of $X$, and find a basis $(e_j)_{j=1}^n$ for $E$.  Then $(e_j+i0)_{j=1}^n$ spans an $n$-dimensional subspace of $X_\mathbb{C}$, which means we can find $(\alpha_j)_{j=1}^n\in\mathbb{R}^n$ and $(\beta_j)_{j=1}^n\in\mathbb{R}^n$ such that
\[\norm{\sum_{j=1}^n(\alpha_j+i\beta_j)R_\mathbb{C}(e_j+i0)}_{Y_\mathbb{C}}<\frac{\epsilon}{2}\norm{\sum_{j=1}^n(\alpha_j+i\beta_j)(e_j+i0)}_{X_\mathbb{C}}\]
and hence, together with \eqref{complexification-l1},
\begin{multline*}\norm{R\sum_{j=1}^n\alpha_je_j}_Y+\norm{R\sum_{j=1}^n\beta_je_j}_Y\leq 2\norm{\sum_{j=1}^n(R\alpha_je_j+iR\beta_je_j)}_{Y_\mathbb{C}}\\=2\norm{\sum_{j=1}^n(\alpha_j+i\beta_j)R_\mathbb{C}(e_j+i0)}_{Y_\mathbb{C}}<\epsilon\norm{\sum_{j=1}^n(\alpha_j+i\beta_j)(e_j+i0)}_{X_\mathbb{C}}\\=\epsilon\norm{\sum_{j=1}^n(\alpha_je_j+i\beta_je_j)}_{X_\mathbb{C}}\leq\epsilon\norm{\sum_{j=1}^n\alpha_je_j}_X+\epsilon\norm{\sum_{j=1}^n\beta_je_j}_X.\end{multline*}
It follows that either
\[\norm{R\sum_{j=1}^n\alpha_je_j}_Y<\epsilon\norm{\sum_{j=1}^\infty\alpha_je_j}_X\;\;\;\text{ or }\;\;\;\norm{R\sum_{j=1}^n\beta_je_j}_Y<\epsilon\norm{\sum_{j=1}^n\beta_je_j}_X.\]
This means $R\in\mathcal{FSS}(X,Y)$.\end{proof}

\section{Continuum many closed ideals in $\mathcal{L}(X)$ and $\mathcal{L}(X^*)$}

In this section we will adapt the proof of \cite[Theorem 6]{SZ14} to a more general case.  This will require us to summarize and restate many of the preliminaries in that paper.

Fix $p\in(1,2)$, and let $p'\in(2,\infty)$ denote its conjugate, i.e. $\frac{1}{p}+\frac{1}{p'}=1$.  Also, let $\bv=(v_n)_{n=1}^\infty$ denote a sequence of values in $(0,1]$.  For each $n\in\mathbb{Z}^+$, in \cite[Section 2.4]{SZ14} was defined a real finite-dimensional Banach space $E_{p',v_n}^{(n)}=(\mathbb{R}^n,\norm{\cdot}_{p',v_n})$, according to the rule
\[\norm{(a_j)_{j=1}^n}_{p',v_n}=\norm{(a_j)_{j=1}^n}_{\ell_{p'}^n}\vee v_n\norm{(a_j)_{j=1}^n}_{\ell_2^n}.\]
Let $(e_j^{(p',\bv,n)})_{j=1}^n$ denote the canonical basis of $E_{p',v_n}^{(n)}$, and denote by $(e_j^{(p',\bv,n)*})_{j=1}^n$ the biorthogonal basis for its dual $E_{p',v_n}^{(n)*}$.  We can also fix, once and for all, a sequence $(f_j^{(p,\bv,n)})_{j=1}^n$ of independent, symmetric, 3-valued, random variables in $L_p$, satisfying $\norm{f_j^{(p,\bv,n)}}_{L_p}=1$ and $\norm{f_j^{(p,\bv,n)}}_{L_2}=\frac{1}{v_n}$, and then define the space $F_{p,v_n}^{(n)}=[f_j^{(p,\bv,n)}]_{j=1}^n$.  When $p$ and $\bv$ understood from context, we will simply write $f_j^{(n)}=f_j^{(p,\bv,n)}$, and refer to $(f_j^{(n)})_{j=1}^n$ as the {\bf canonical basis} for $F_{p,v_n}^{(n)}$.

In \cite[p5]{SZ14} it was observed that we can view spaces $F_{p,v_n}^{(n)}$ as subspaces of $\ell_p^{3^n}$.  This is because, since the vectors $(f_j^{(n)})_{j=1}^n$ are 3-valued, their span is a subspace of the span of characteristic functions on $3^n$ pairwise disjoint sets in $L_p$, whose span is in turn isometrically isomorphic to $\ell_p^{3^n}$.  In particular, we can view $(f_j^{(n)})_{j=1}^n$ as vectors in $\ell_p^{3^n}$.  It was also observed in \cite[p5, eq. (4)]{SZ14} that
\begin{multline}\label{4}\frac{1}{K_p}\norm{\sum_{j=1}^na_je_j^{(p',\bv,n)*}}_{E_{p',v_n}^{(n)*}}\leq\norm{\sum_{j=1}^na_jf_j^{(n)}}_{\ell_p^{3^n}}\leq\norm{\sum_{j=1}^na_je_j^{(p',\bv,n)*}}_{E_{p',v_n}^{(n)*}}\\\forall(a_j)_{j=1}^n\in\mathbb{R}^n,\end{multline}
where $K_p\in[1,\infty)$ is a constant depending only on $p$.  (Note that we will refer to these same constants $K_p$, $1<p<\infty$, throughout this section.)  Then was given, in \cite[Proposition 1]{SZ14}, that $(f_j^{(n)})_{j=1}^n$ is a normalized, 1-unconditional basis for $F_{p,v_n}^{(n)}$, that there exist projections $P_{p,v_n}^{(n)}\in\mathcal{L}(\ell_p^{3^n})$ onto $F_{p,v_n}^{(n)}$, $n\in\mathbb{Z}^+$, and that these projections are uniformly bounded by $K_p$.

We shall need another important fact about these spaces, which has already been proved in \cite{SZ14}.

\begin{lemma}[{\cite[Proposition 1(iii)]{SZ14}}]\label{1-iii}Let $p\in(1,2)$, and let $\bv=(v_n)$ be a sequence in $(0,1]$.  Then for each $n\in\mathbb{Z}^+$, each $1\leq k\leq n$, and each $A\subseteq\{1,\cdots,n\}$ satisfying $|A|=k$, we have
\[\frac{1}{K_p}\left(k^{1/p}\wedge\frac{k^{1/2}}{v_n}\right)\leq\norm{\sum_{j\in A}f_j^{(n)}}_{\ell_p^{3^n}}\leq k^{1/p}\wedge\frac{k^{1/2}}{v_n},\]
where $(f_j^{(n)})_{j=1}^n$ denotes the canonical basis for $F_{p,v_n}^{(n)}$.\end{lemma}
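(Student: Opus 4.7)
My plan is to reduce the inequality to a concrete computation of a dual norm, using the two-sided bound \eqref{4} already available in the excerpt.

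\textbf{Step 1: Reduce to a dual-norm calculation.} Setting $a_j = 1$ for $j\in A$ and $a_j=0$ otherwise, the estimate \eqref{4} directly gives
\[\frac{1}{K_p}\Bignorm{\sum_{j\in A} e_j^{(p',\bv,n)*}}_{E_{p',v_n}^{(n)*}} \;\leq\; \Bignorm{\sum_{j\in A} f_j^{(n)}}_{\ell_p^{3^n}} \;\leq\; \Bignorm{\sum_{j\in A} e_j^{(p',\bv,n)*}}_{E_{p',v_n}^{(n)*}}.\]
So the only thing to prove is that the dual norm $\norm{\sum_{j\in A} e_j^{(p',\bv,n)*}}_{E_{p',v_n}^{(n)*}}$ equals $k^{1/p}\wedge (k^{1/2}/v_n)$. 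Once this identity is in hand, the lemma follows at once.

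\textbf{Step 2: Evaluate the dual norm.} By the definition of the norm on $E_{p',v_n}^{(n)}$, the unit ball of that space is the intersection
\[B \;=\; \{a\in\mathbb{R}^n:\norm{a}_{\ell_{p'}^n}\leq 1\}\;\cap\;\{a\in\mathbb{R}^n: v_n\norm{a}_{\ell_2^n}\leq 1\}.\]
Therefore
\[\Bignorm{\sum_{j\in A} e_j^{(p',\bv,n)*}}_{E_{p',v_n}^{(n)*}} \;=\; \sup\Bigl\{\sum_{j\in A} a_j : a\in B\Bigr\}.\]
Because the objective $\sum_{j\in A} a_j$ depends only on coordinates in $A$ and both norm constraints are symmetric under coordinate permutations and symmetric around $0$, one checks by an averaging/convexity argument that the supremum is attained at a vector of the form $a_j = c$ for $j\in A$ and $a_j=0$ otherwise, with $c\geq 0$. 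For such a vector, the $\ell_{p'}$ constraint becomes $c\leq k^{-1/p'}$ and the $\ell_2$ constraint becomes $c\leq 1/(v_n k^{1/2})$. Optimizing yields $c = k^{-1/p'}\wedge 1/(v_n k^{1/2})$, and hence
\[\sum_{j\in A} a_j \;=\; kc \;=\; k^{\,1-1/p'}\wedge\frac{k^{1/2}}{v_n}\;=\;k^{1/p}\wedge\frac{k^{1/2}}{v_n}.\]

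\textbf{Step 3: Main obstacle and how to handle it.} The genuine work is the upper bound on the sup in Step 2, i.e., showing that no asymmetric vector $a\in B$ beats the symmetric one. The cleanest way is to note that the sup over $B$ is bounded above by $\min(M_{p'},M_2)$, where $M_{p'}$ (resp.\ $M_2$) is the sup of $\sum_{j\in A} a_j$ over the $\ell_{p'}^n$-ball (resp.\ $\{v_n\norm{a}_2\leq 1\}$). By H\"older, $M_{p'}=k^{1/p}$, and by Cauchy--Schwarz, $M_2 = k^{1/2}/v_n$, each attained by the constant-on-$A$ vector of appropriate scale. Checking which of the two extremizers lies in $B$ (a routine case split on whether $v_n\leq k^{1/p'-1/2}$ or not) shows that $\min(M_{p'},M_2)$ is itself realized in $B$, so the supremum equals $k^{1/p}\wedge (k^{1/2}/v_n)$. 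Combining this with the bounds from Step 1 gives exactly the inequalities claimed in the lemma.
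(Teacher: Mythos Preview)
Your argument is correct: reducing via \eqref{4} to the explicit computation of $\norm{\sum_{j\in A}e_j^{(p',\bv,n)*}}_{E_{p',v_n}^{(n)*}}$, and then identifying that dual norm as $k^{1/p}\wedge k^{1/2}/v_n$ by the case split on $v_n\lessgtr k^{1/p'-1/2}$, is a clean and complete proof. Note, however, that the paper does not itself prove this lemma; it is simply quoted from \cite[Proposition~1(iii)]{SZ14}, so there is no in-paper proof to compare against---your write-up supplies the details the paper elects to import.
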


Let us now define the space
\[Y_{p,\bv}=\left(\bigoplus_{n=1}^\infty F_{p,v_n}^{(n)}\right)_{\ell_p},\]
which can be viewed as a $K_p$-complemented subspace of $\ell_p=(\oplus_{n=1}^\infty\ell_p^{3^n})_{\ell_p}$.  Indeed, there exists a $K_p$-projection $P_{p,\bv}\in\mathcal{L}(\ell_p)$ onto $Y_{p,\bv}$ defined by
\[P_{p,\bv}=\bigoplus_{n=1}^\infty P_{p,v_n}^{(n)}:\ell_p=\left(\bigoplus_{n=1}^\infty\ell_p^{3^n}\right)_{\ell_p}\to\left(\bigoplus_{n=1}^\infty\ell_p^{3^n}\right)_{\ell_p}=\ell_p.\]
When $p$ is understood from context, we will write $Y_{\bv}=Y_{p,\bv}$ and $P_{\bv}=P_{p,\bv}$.

If $X$ is a Banach space with dimension $d\in\mathbb{Z}^+\cup\{\infty\}$, with a fixed basis $(x_i)_{i=1}^d$, we set $N_d=\{1,\cdots,d\}$ if $d\in\mathbb{Z}^+$ and $N_d=\mathbb{Z}^+$ otherwise.  We then define the {\bf fundamental function} $\varphi_X:N_d\cup[1,d)\to\mathbb{R}$ and the {\bf lower fundamental function} $\lambda_X:N_d\cup[1,d)\to\mathbb{R}$ by the rules
\[\varphi_X(k)=\sup\left\{\norm{\sum_{i\in A}x_i}_X:A\subseteq N_d,|A|\leq k\right\}\;\;\;\text{ and}\]
\[\lambda_X(k)=\inf\left\{\norm{\sum_{i\in A}x_i}:A\subseteq N_d,|A|\geq k\right\}\]
for all $k\in N_d$, extending them each to $N_d\cup[1,d)$ via linear interpolation.  Notice that we obtain, immediately from Lemma \ref{1-iii}, the following.

%
%
%

\begin{corollary}\label{1-iii-corollary}Let $p\in(1,2)$, and let $\bv=(v_n)$ be a sequence in $(0,1]$.  Then for any $n\in\mathbb{Z}^+$ and any $1\leq k\leq n$ we have
\[\varphi_{F_n}(k)\leq k^{1/p}\wedge\frac{k^{1/2}}{v_n},\]
where we have used the notation $F_n=F_{p,v_n}^{(n)}$.\end{corollary}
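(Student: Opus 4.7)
The plan is to derive this bound directly from the upper estimate in Lemma \ref{1-iii}, treating the integer case first and then invoking linear interpolation for non-integer $k$.

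First I would handle the case where $k\in\{1,\ldots,n\}$. By definition of the fundamental function,
\[\varphi_{F_n}(k)=\sup\left\{\norm{\sum_{j\in A}f_j^{(n)}}_{\ell_p^{3^n}}:A\subseteq\{1,\ldots,n\},\,|A|\leq k\right\}.\]
For each such $A$, Lemma \ref{1-iii} (applied with $|A|$ in place of $k$) gives
\[\norm{\sum_{j\in A}f_j^{(n)}}_{\ell_p^{3^n}}\leq|A|^{1/p}\wedge\frac{|A|^{1/2}}{v_n}.\]
The key observation is that both functions $t\mapsto t^{1/p}$ and $t\mapsto t^{1/2}/v_n$ are nondecreasing on $[0,\infty)$, so their pointwise minimum $g(t):=t^{1/p}\wedge t^{1/2}/v_n$ is nondecreasing. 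Since $|A|\leq k$, this yields
\[\norm{\sum_{j\in A}f_j^{(n)}}_{\ell_p^{3^n}}\leq g(|A|)\leq g(k)=k^{1/p}\wedge\frac{k^{1/2}}{v_n},\]
and taking the supremum over $A$ establishes the estimate at integer values.

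Finally, for non-integer $k\in[1,n)$, one extends $\varphi_{F_n}$ by linear interpolation according to the stipulated definition. Since $1/p\in(1/2,1)\subseteq(0,1)$ and $1/2\in(0,1)$, both $t\mapsto t^{1/p}$ and $t\mapsto t^{1/2}/v_n$ are concave on $[0,\infty)$, and the pointwise minimum of two concave functions is concave. Therefore the chord joining $(\lfloor k\rfloor,g(\lfloor k\rfloor))$ and $(\lceil k\rceil,g(\lceil k\rceil))$ lies below $g(k)$, so
\[\varphi_{F_n}(k)=(1-\{k\})\varphi_{F_n}(\lfloor k\rfloor)+\{k\}\varphi_{F_n}(\lceil k\rceil)\leq(1-\{k\})g(\lfloor k\rfloor)+\{k\}g(\lceil k\rceil)\leq g(k),\]
where $\{k\}=k-\lfloor k\rfloor$. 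This completes the proof. There is no real obstacle here; the only point requiring any care is the passage from integer to non-integer $k$, which is handled by concavity of the bounding function.
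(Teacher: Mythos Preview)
Your proof is correct and follows exactly the route the paper intends: the paper states this corollary as an immediate consequence of Lemma~\ref{1-iii} without writing out any details, and you have simply supplied those details (monotonicity for the integer case, concavity for the interpolated values). There is nothing to add.
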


As a matter of convention, if $\bv=(v_n)_{n=1}^\infty$ is a nonincreasing sequence in $(0,1]$, then we extend $\bv$ to all of $[0,\infty)$ by setting $v_0=1$ and $v_x=v_{\lfloor x\rfloor}$.  This allows us to restate a Lemma proved in \cite{SZ14}.

\begin{lemma}[{\cite[Lemma 3]{SZ14}}]\label{Lemma-3}Let $p\in(1,2)$, and let $\bv=(v_n)_{n=1}^\infty$ be a nonincreasing sequence in $(0,1]$ satisfying $v_n\geq n^{-\eta}$ for all $n\in\mathbb{Z}^+$, where $\eta=\frac{1}{p}-\frac{1}{2}$.  Then for each $k\in\mathbb{Z}^+$ we have
\[\lambda_{Y_{p,\bv}}(k)\geq\frac{\lfloor\sqrt{k/2}\rfloor}{v_{\lfloor\sqrt{k/2}\rfloor}K_p\sqrt{2}}.\]\end{lemma}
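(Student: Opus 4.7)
The plan is to fix an arbitrary finite subset $A$ of the canonical basis index set of $Y_{p,\bv}$ with $|A|\geq k$, and bound $\bignorm{\sum_{i\in A}y_i}_{Y_{p,\bv}}$ below, where $(y_i)$ denotes the canonical basis. Set $m=\lfloor\sqrt{k/2}\rfloor$; the case $m=0$ (i.e.\ $k=1$) is trivial, so assume $m\geq 1$. Decompose $A$ along the block structure $Y_{p,\bv}=(\bigoplus_n F_{p,v_n}^{(n)})_{\ell_p}$: write $A_n\subseteq\{1,\ldots,n\}$ for the indices in the $n$th block, so $s_n:=|A_n|\leq n$ and $\sum_n s_n\geq k$. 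The $\ell_p$-direct sum structure together with Lemma \ref{1-iii} yields
\[\bignorm{\sum_{i\in A}y_i}_{Y_{p,\bv}}^p=\sum_n\bignorm{\sum_{j\in A_n}f_j^{(n)}}_{\ell_p^{3^n}}^p\geq\frac{1}{K_p^p}\sum_n\left(s_n\wedge\frac{s_n^{p/2}}{v_n^p}\right).\]

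The first key step is a capacity estimate. Since $s_n\leq n$ and $m(m+1)\leq 2m^2\leq k$ (using $m^2\leq k/2$), we have $\sum_{n\leq m}s_n\leq m(m+1)/2\leq k/2$, so $\sum_{n>m}s_n\geq k/2$. For $n>m$, the fact that $\bv$ is nonincreasing gives $v_n\leq v_m$, so $s_n\wedge s_n^{p/2}/v_n^p\geq h_m(s_n)$ where $h_m(s):=s\wedge s^{p/2}/v_m^p$. The function $h_m$ is nondecreasing and concave on $[0,\infty)$ (it is the minimum of the linear function $s$ and the concave function $s^{p/2}/v_m^p$, the latter because $p/2\leq 1$), and satisfies $h_m(0)=0$, hence is subadditive. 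Applying subadditivity repeatedly, followed by monotonicity, yields
\[\sum_{n>m}\left(s_n\wedge\frac{s_n^{p/2}}{v_n^p}\right)\geq\sum_{n>m}h_m(s_n)\geq h_m\left(\sum_{n>m}s_n\right)\geq h_m(k/2).\]

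Finally, evaluate $h_m(k/2)$ using the hypothesis $v_m\geq m^{-\eta}$, which gives $v_m^{-1/\eta}\leq m\leq m^2\leq k/2$. Thus $k/2$ lies in the regime where $h_m(s)=s^{p/2}/v_m^p$, so $h_m(k/2)=(k/2)^{p/2}/v_m^p$; and $m^2\leq k/2$ gives $m^p\leq (k/2)^{p/2}$, whence $h_m(k/2)\geq m^p/v_m^p$. Combining, $\bignorm{\sum_{i\in A}y_i}_{Y_{p,\bv}}\geq m/(K_pv_m)$, which is stronger than (and implies) the claimed bound, the $\sqrt{2}$ appearing as slack. The main subtlety is the calibration of $m=\lfloor\sqrt{k/2}\rfloor$: it is small enough that the first $m$ blocks have insufficient total capacity to hold half the mass of $A$ (forcing enough mass into blocks $n>m$ where we may exploit $v_n\leq v_m$), yet under the hypothesis $v_m\geq m^{-\eta}$ it is still true that $k/2\geq v_m^{-1/\eta}$, which is what allows us to pass to the $s^{p/2}/v_m^p$ branch of $h_m$ at the final step.
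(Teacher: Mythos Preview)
The paper does not give its own proof of this lemma; it is stated with a citation to \cite[Lemma~3]{SZ14} and used as a black box. Your argument is correct and is essentially the standard one: split the mass of $A$ between the first $m$ blocks (whose total capacity is at most $m(m+1)/2\leq k/2$) and the remaining blocks, apply Lemma~\ref{1-iii} blockwise, and then use concavity/subadditivity of $s\mapsto s\wedge s^{p/2}/v_m^p$ to collapse the tail sum. Two minor remarks: first, you may note explicitly that the infimum defining $\lambda_{Y_{p,\bv}}(k)$ is attained on sets with $|A|=k$ (by $1$-unconditionality), so the sums involved are finite and the subadditivity step is over finitely many nonzero terms; second, your final inequality $\lambda_{Y_{p,\bv}}(k)\geq m/(K_pv_m)$ is indeed sharper than the stated bound by a factor of $\sqrt{2}$, and this is consistent with the way the lemma is used later in the paper, where the $\sqrt{2}$ plays no role.
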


In order to prove our main result, we shall need two more Lemmas from \cite{SZ14}.

\begin{lemma}[{\cite[Lemma 4]{SZ14}}]\label{Lemma-4}Let $Y$ be an infinite-dimensional Banach space with a normalized, 1-unconditional basis $(f_j)_{j=1}^\infty$, and for each $m\in\mathbb{Z}^+$, let $G_m$ be an $m$-dimensional Banach space with a normalized, 1-unconditional basis $(g_i^{(m)})_{i=1}^m$, and let $B_m:G_m\to Y$ be a linear operator with $\norm{B_m}\leq 1$.  Assume that the conditions
\begin{equation}\label{8}\lim_{k\to\infty}\sup_{m\geq k}\frac{\varphi_{G_m}(k)}{k}=0,\;\;\;\text{ and}\end{equation}
\begin{equation}\label{9}\lim_{m\to\infty}\frac{\varphi_{G_m}(m)}{\lambda_Y(cm)}=0\;\;\;\text{ for all }c>0\end{equation}
are both satisfied.  Then
\[\lim_{m\to\infty}\frac{1}{m}\sum_{i=1}^m\norm{B_mg_i^{(m)}}_\infty=0,\]
where we define $\norm{y}_\infty=\sup_{j\in\mathbb{Z}^+}|y_j|$ for any $y=\sum_{j=1}^\infty y_jf_j\in Y$.\end{lemma}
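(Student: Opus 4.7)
The plan is a contradiction argument. Suppose, along some subsequence (relabeled as $m$), that $\frac{1}{m}\sum_{i=1}^m\norm{B_m g_i^{(m)}}_\infty \geq \delta$ for some $\delta>0$. Expand $B_m g_i^{(m)}=\sum_j c_{ij}f_j$; then $|c_{ij}|\leq\norm{B_mg_i^{(m)}}_Y\leq 1$ by $1$-unconditionality of $(f_j)$. A pigeonhole extracts $I\subseteq\{1,\ldots,m\}$ with $|I|\geq \delta m/2$ together with indices $j(i)$ satisfying $|c_{i,j(i)}|\geq \delta/4$ for $i\in I$. Partition $I$ by the level sets $A_j=\{i\in I:j(i)=j\}$, fix a threshold $k_0$ to be chosen later, and split $I=I_{\text{big}}\sqcup I_{\text{small}}$ according to whether $|A_{j(i)}|\geq k_0$ or not; at least one piece has size $\geq \delta m/4$. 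The two cases will use hypotheses \eqref{8} and \eqref{9} respectively.

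\textbf{Case A (concentrated).} If $|I_{\text{big}}|\geq \delta m/4$, pick any $j^\star$ with $|A_{j^\star}|\geq k_0$ and set $\epsilon_i=\sgn(c_{i,j^\star})$ for $i\in A_{j^\star}$. The $j^\star$-coordinate of $B_m\sum_{i\in A_{j^\star}}\epsilon_i g_i^{(m)}$ equals $\sum_{i\in A_{j^\star}}|c_{i,j^\star}|\geq(\delta/4)|A_{j^\star}|$. By $1$-unconditionality of $(f_j)$ this coordinate is bounded in modulus by the $Y$-norm, and by $\norm{B_m}\leq 1$ and $1$-unconditionality of $(g_i^{(m)})$ that norm is $\leq\varphi_{G_m}(|A_{j^\star}|)$. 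Thus $\varphi_{G_m}(|A_{j^\star}|)/|A_{j^\star}|\geq \delta/4$ with $|A_{j^\star}|\geq k_0$, contradicting \eqref{8} once $k_0$ is chosen large enough (depending only on $\delta$).

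\textbf{Case B (spread-out).} If $|I_{\text{small}}|\geq \delta m/4$, then $J:=\{j:0<|A_j|<k_0\}$ satisfies $|J|\geq \delta m/(4k_0)$. Choose distinct $i(j)\in A_j$ for $j\in J$, let $I':=\{i(j):j\in J\}$, and consider $v_\epsilon=\sum_{i\in I'}\epsilon_i g_i^{(m)}$ for i.i.d.\ Rademacher signs $\epsilon_i$. For $j\in J$, the $j$-coordinate $(B_m v_\epsilon)_j=\sum_{i\in I'}\epsilon_i c_{ij}$ has second moment $\sum_{i\in I'}c_{ij}^2\geq c_{i(j),j}^2\geq \delta^2/16$, and by Khintchine's inequality its fourth moment is at most a universal multiple of the square of its second moment. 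A Paley--Zygmund estimate then produces a universal $p_0>0$ and a particular sign choice $\epsilon^\star$ for which the set $W:=\{j\in J:|(B_m v_{\epsilon^\star})_j|\geq \delta/(4\sqrt 2)\}$ has $|W|\geq p_0|J|$. Using $1$-unconditionality of $(f_j)$ to drop to the large-coordinate subvector,
\[
\frac{\delta}{4\sqrt 2}\,\lambda_Y(|W|) \leq \norm{B_m v_{\epsilon^\star}}_Y \leq \norm{v_{\epsilon^\star}}_{G_m} \leq \varphi_{G_m}(m),
\]
so $\varphi_{G_m}(m)/\lambda_Y(cm)\geq \delta/(4\sqrt 2)$ with $c:=p_0\delta/(4k_0)>0$, contradicting \eqref{9}.

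The main obstacle is Case B. A deterministic choice $\epsilon_i=\sgn(c_{i,j(i)})$ is not enough, because the cross-terms $\sum_{i\in I',\,i\neq i(j)}\epsilon_i c_{ij}$ in the $j$-coordinate can wipe out the intended signal $|c_{i(j),j}|$; this prevents the extraction of many simultaneously large coordinates. Randomizing the signs and using the second-moment lower bound (via Khintchine and Paley--Zygmund) bypasses those cancellations and delivers the large-coordinate set $W$ needed to invoke $\lambda_Y$, which is exactly the ingredient tied to hypothesis \eqref{9}.
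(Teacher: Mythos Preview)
The paper does not supply its own proof of this lemma; it is quoted verbatim from \cite[Lemma~4]{SZ14} and used as a black box. So there is nothing in the present paper to compare your argument against.

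That said, your proof is correct. The contradiction scheme---pigeonhole to extract a set $I$ of indices with large sup-coordinate, then split by whether the map $i\mapsto j(i)$ has a large fiber or many small fibers---is precisely the right dichotomy, and it lines up with the two hypotheses: Case~A (a single fiber of size $\geq k_0$) feeds into \eqref{8}, while Case~B (many small fibers, hence $\gtrsim m$ distinct target coordinates) feeds into \eqref{9}. The Rademacher averaging with Khintchine/Paley--Zygmund in Case~B is a clean way to obtain a single sign vector $\epsilon^\star$ for which a positive fraction of the coordinates $(B_m v_{\epsilon^\star})_j$, $j\in J$, are simultaneously $\gtrsim\delta$; this is exactly what is needed to bring $\lambda_Y$ into play via the $1$-unconditional lattice estimate. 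Your explanation of why a deterministic sign choice fails (cross-terms can cancel the intended signal) is apt.

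One stylistic remark on the logical order: once $k_0$ is fixed from \eqref{8} so that $\varphi_{G_m}(k)/k<\delta/4$ whenever $k_0\leq k\leq m$, Case~A is \emph{impossible} for every $m$ in the subsequence with $m\geq k_0$. Hence Case~B must hold for all such $m$, and the bound $\varphi_{G_m}(m)/\lambda_Y(cm)\geq\delta/(4\sqrt{2})$ persists along the entire tail of the subsequence, contradicting \eqref{9}. Making this explicit would tighten the write-up.
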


Recall that when $1\leq p\leq 2$, the space $\ell_p$ has cotype 2 (cf., e.g., \cite[Theorem 6.2.14]{AK06}).  Throughout this section, we shall let $C_p$ denote the cotype-2 constant for $\ell_p$.

\begin{lemma}\label{Lemma-5}Suppose $p\in(1,2)$ and $q\in(p,\infty)$, and let $\bv=(v_n)_{n=1}^\infty$ be a sequence in $(0,1]$.  Fix any $n\in\mathbb{Z}^+$ and any $\sigma\in(0,1]$ satisfying $v_n\leq\sigma^{\frac{1}{2}-\frac{1}{p'}}$, and let $(f_j^{(n)})_{j=1}^n$ denote the canonical basis for $F_{p,v_n}^{(n)}$.  If $y=\sum_{j=1}^ny_jf_j^{(n)}$ satisfies
\[\norm{y}_{F_{p,v_n}^{(n)}}\leq 1,\;\;\;\text{ and }\;\;\;\sup_{1\leq j\leq n}|y_j|\leq\sigma,\]
then
\[\norm{(y_j)_{j=1}^n}_{\ell_2^n}^q\leq\max\{C_p^p,K_p^q\}\cdot\sigma^{\frac{q}{2}-\frac{p}{2}\wedge\frac{q}{2}-\frac{q}{p'}}\cdot\norm{y}_{F_{p,v_n}^{(n)}}^p.\]
Furthermore, we have
\[\norm{(y_j)_{j=1}^n}_{\ell_2^n}\leq K_p\sigma^{\frac{1}{2}-\frac{1}{p'}}.\]\end{lemma}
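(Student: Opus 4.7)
The plan is to prove the ``furthermore'' inequality first via a duality/decomposition argument, then obtain the first inequality by raising those estimates to the $q$-th power and splitting on whether $q \ge 2$ or $p<q<2$.

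\emph{Setup and the second inequality.}
By \eqref{4} and $\norm{y}_{F_{p,v_n}^{(n)}} \le 1$ one has $\norm{\sum_j y_j e_j^{(p',\bv,n)*}}_{E_{p',v_n}^{(n)*}} \le K_p$.  The unit ball of $E_{p',v_n}^{(n)}$ is $B_{\ell_{p'}^n} \cap v_n^{-1}B_{\ell_2^n}$, so polar duality identifies $E_{p',v_n}^{(n)*}$ with the ``infimum convolution'' norm
\[
\norm{z}_{E_{p',v_n}^{(n)*}} = \inf\bigl\{\norm{c}_{\ell_p^n} + v_n^{-1}\norm{d}_{\ell_2^n} : z=c+d\bigr\}.
\]
For $\epsilon > 0$ pick a decomposition $y=c+d$ within $\epsilon$ of this infimum.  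By replacing $(c_j, d_j)$ with signed truncations of magnitude at most $|y_j|$ (which can only reduce both $\norm{c}_{\ell_p^n}$ and $\norm{d}_{\ell_2^n}$), I may assume $|c_j|, |d_j| \le |y_j| \le \sigma$.  Then $|c_j|^2 \le \sigma^{2-p}|c_j|^p$ gives $\norm{c}_{\ell_2^n} \le \sigma^{(2-p)/2}(K_p+\epsilon)^{p/2}$, while $\norm{d}_{\ell_2^n} \le v_n(K_p+\epsilon) \le (K_p+\epsilon)\sigma^{1/2-1/p'}$ by the hypothesis on $v_n$.  Since $(2-p)/2 \ge 1/p-1/2 = 1/2-1/p'$ for $p \ge 1$ and $\sigma \le 1$, the triangle inequality yields the second statement after absorbing constants into $K_p$.

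\emph{The first inequality.}
The $d$-piece always gives $\norm{d}_{\ell_2^n}^q \le v_n^q K_p^q \norm{y}_{F_{p,v_n}^{(n)}}^q \le K_p^q \sigma^{q(1/p-1/2)}\norm{y}_{F_{p,v_n}^{(n)}}^p$, using $v_n \le \sigma^{1/p-1/2}$, $q \ge p$, and $\norm{y}_{F_{p,v_n}^{(n)}} \le 1$.  For the $c$-piece, when $q \ge 2$ raise the interpolation estimate to get $\norm{c}_{\ell_2^n}^q \le \sigma^{q(2-p)/2}\norm{c}_{\ell_p^n}^{qp/2} \le K_p^{qp/2}\sigma^{q(2-p)/2}\norm{y}_{F_{p,v_n}^{(n)}}^p$ (where $qp/2 \ge p$ justifies reducing the last exponent).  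When $p<q<2$ instead use subadditivity of $t \mapsto t^{q/2}$ to obtain $\norm{c}_{\ell_2^n}^q \le \sum_j |c_j|^q \le \sigma^{q-p}\norm{c}_{\ell_p^n}^p \le K_p^p\sigma^{q-p}\norm{y}_{F_{p,v_n}^{(n)}}^p$; since $q \le 2 \le pp'/2$, the exponent $q-p$ and the exponent $q(1/p-1/2)$ from the $d$-piece both dominate $\min\{(q-p)/2, q(1/p-1/2)\}$.  Alternatively, for $p<q<2$, one can replace subadditivity by cotype-$2$ of $\ell_p^{3^n}$ applied to the vectors $y_jf_j^{(n)} \in \ell_p^{3^n}$: symmetry of the $f_j^{(n)}$ identifies the Rademacher average $\mathbb{E}\norm{\sum_j \epsilon_j y_jf_j^{(n)}}_{\ell_p^{3^n}}$ with $\norm{y}_{F_{p,v_n}^{(n)}}$, yielding $\norm{y}_{\ell_2^n}^p \le C_p^p\norm{y}_{F_{p,v_n}^{(n)}}^p$, which can then be multiplied by $\norm{y}_{\ell_2^n}^{q-p}$ bounded via the second inequality; this is where the $C_p^p$ in the claimed constant $\max\{C_p^p, K_p^q\}$ arises.

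\emph{Main obstacle.}
The only delicate regime is $p<q<2$, in which $qp/2 < p$ so the interpolation-based estimate does not automatically produce the desired factor $\norm{y}_{F_{p,v_n}^{(n)}}^p$; either the subadditivity trick or a cotype-$2$ argument is needed to get around this.  The remaining work is routine arithmetic verifying $q(2-p)/2 \ge q(1/p-1/2)$ (equivalent to $p \ge 1$) and the other exponent comparisons needed to conclude that every $\sigma$-exponent appearing dominates $\min\{(q-p)/2, q(1/p-1/2)\}$.
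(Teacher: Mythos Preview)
Your approach differs substantially from the paper's. The paper does not prove the first inequality at all: it is quoted verbatim from \cite[Lemma~5]{SZ14}, and the ``furthermore'' part is then obtained in one line by raising the first inequality to the power $1/q$ and letting $q\to\infty$ (so that $\max\{C_p^{p/q},K_p\}\to K_p$, the $\sigma$-exponent tends to $\tfrac12-\tfrac1{p'}$, and $\norm{y}_F^{p/q}\to 1$). You work in the opposite direction, establishing the ``furthermore'' bound first via the infimal-convolution description of $E_{p',v_n}^{(n)*}$ and then assembling the main inequality from the pieces $c,d$ of the decomposition. Your duality identification and the truncation step ($|c_j|,|d_j|\le|y_j|$) are correct, and the individual estimates on $\norm{c}_2$ and $\norm{d}_2$ are fine; this route is conceptually independent and would yield a usable lemma for the rest of the paper, where only the positivity of the $\sigma$-exponent matters.

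However, as a proof of the lemma \emph{as stated} there is a genuine gap at the level of constants. The phrase ``after absorbing constants into $K_p$'' is not legitimate: $K_p$ is the fixed constant from \eqref{4}, and your triangle-inequality combination gives at best $\norm{y}_2\le(K_p^{p/2}+K_p)\sigma^{1/2-1/p'}\le 2K_p\sigma^{1/2-1/p'}$, not $K_p\sigma^{1/2-1/p'}$. The same issue is worse in the first inequality: from $\norm{y}_2\le\norm{c}_2+\norm{d}_2$ you inevitably pick up a factor $2^{q-1}$ when passing to $\norm{y}_2^q$, so the stated constant $\max\{C_p^p,K_p^q\}$ is unattainable by this route. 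Your cotype ``alternative'' does not repair this: multiplying $\norm{y}_2^p\le C_p^p\norm{y}_F^p$ by $\norm{y}_2^{q-p}\le K_p^{q-p}\sigma^{(q-p)(1/p-1/2)}$ produces the $\sigma$-exponent $(q-p)(\tfrac1p-\tfrac12)$, which is strictly smaller than both $(q-p)/2$ and $q(\tfrac1p-\tfrac12)$ for $p>1$, hence strictly smaller than their minimum---so that bound is weaker than the one claimed. By contrast, the paper's limit argument inherits the sharp constant $K_p$ directly from the cited \cite{SZ14} result.
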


\begin{proof}The first part of this Lemma is just a restatement of \cite[Lemma 5]{SZ14}.  To prove the ``furthermore'' part, let us raise each side of the previous inequality to the $1/q$ power, to obtain
\[\norm{(y_j)_{j=1}^n}_{\ell_2^n}\leq\max\{C_p^{p/q},K_p\}\cdot\sigma^{\frac{1}{2}-\frac{p}{2q}\wedge\frac{1}{2}-\frac{1}{p'}}\cdot\norm{y}_{F_{p,v_n}^{(n)}}^{p/q}.\]
Taking the limit as $q\to\infty$, we now have
\[\norm{(y_j)_{j=1}^n}_{\ell_2^n}\leq\max\{1,K_p\}\cdot\sigma^{\frac{1}{2}\wedge\frac{1}{2}-\frac{1}{p'}}=K_p\sigma^{\frac{1}{2}-\frac{1}{p'}}.\]\end{proof}

Fix any nonincreasing sequence $\bv=(v_n)$ in $(0,1]$ and any $p\in(1,2)$.  In the proof to \cite[Lemma 5]{SZ14} it was observed that, in $\ell_p$, any normalized and 1-unconditional basis $C_p$-dominates the canonical basis of $\ell_2$.  (This is a straightforward consequence of $\ell_p$, $1\leq p\leq 2$, having cotype 2.)  In particular, the canonical basis of each $\ell_2^n$, $n\in\mathbb{Z}^+$, is $C_p$-dominated by canonical basis of $F_{p,v_n}^{(n)}$.  Thus, for any $q\in[p,\infty]$ we may define the formal identity operator
\[I_{p,\bv,q}:Y_{p,\bv}\to Z_q\]
such that $\norm{I_{p,\bv,q}}\leq C_p$.  When $p$ is understood from context, we will simply write $I_{\bv,q}=I_{p,\bv,q}$.  In case we need to consider Banach spaces over $\mathbb{C}$, as an abuse of notation we will write $I_{\bv,q}$ and $P_{\bv}$ in place of the complexification maps
\[(I_{\bv,q})_{\mathbb{C}}:(Y_{p,\bv})_{\mathbb{C}}\to(Z_q)_{\mathbb{C}}\;\;\;\text{ and }\;\;\;(P_{\bv})_\mathbb{C}:(\ell_p)_\mathbb{C}\to(\ell_p)_\mathbb{C}.\]

This gives us enough machinery to prove the next Lemma.  It is analogous to \cite[Theorem 6]{SZ14}, and the proof here is essentially the same, except making slight modifications where necessary.  Note that we include some gritty details which were omitted from the original proof in \cite{SZ14}.  (In fact, we shall continue to follow this policy of giving greater detail when later adapting results from \cite{SSTT07} in Sections 4 and 5, as we believe it enhances readability.)

\begin{lemma}\label{Theorem-6}Let $p\in(1,2)$ and $q\in(p,\infty]$, and let $\bv=(v_n)_{n=1}^\infty$ and $\bw=(w_n)_{n=1}^\infty$ be nonincreasing sequences in $(0,1]$.  Let $Y_{\bv}$, $Y_{\bw}$, $I_{\bv,q}$, $I_{\bw,q}$, $P_{\bv}$, and $P_{\bw}$ be as above.  Suppose $X$ is a real Banach space such that $\pi:X\to W$ is a bounded projection onto a subspace $W$ of $X$, such that there exists an isomorphism $U:W\to\ell_p$.  Suppose also that $Y$ is a real Banach space such that there exists a continuous linear embedding $J:Z_q\to Y$.  Assume $v_n\geq n^{-\eta}$ and $w_n\geq n^{-\eta}$ for all $n\in\mathbb{Z}^+$, where $\eta=\frac{1}{p}-\frac{1}{2}$.  Also assume that
\begin{equation}\label{12}\lim_{n\to\infty}\frac{v_{\sqrt{cn}}}{w_n}=0\;\;\;\text{ for all }c\in(0,1),\end{equation}
where we extend $(v_n)_{n=1}^\infty$ to $(v_x)_{x\in[0,\infty)}$ using the rule $v_0=1$ and $v_x=v_{\lfloor x\rfloor}$, as described above.  Then
\[JI_{\bw,q}P_{\bw}U\pi\notin[\mathcal{G}_{I_{\bv,q}}](X,Y)\;\;\;\text{ and }\;\;\;(JI_{\bw,q}P_{\bw}U\pi)_\mathbb{C}\notin[\mathcal{G}_{I_{\bv,q}}](X_\mathbb{C},Y_\mathbb{C}).\]\end{lemma}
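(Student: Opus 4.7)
The plan is to imitate the proof of \cite[Theorem 6]{SZ14}, pushing everything through the embeddings $U$, $P_\bw$, and $J$, and deferring the complexified statement to Proposition \ref{complexification-ideal}.

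First, set $V_\bw := U^{-1}\iota_\bw \in \mathcal{L}(Y_\bw, X)$, where $\iota_\bw : Y_\bw \hookrightarrow \ell_p$ is the natural inclusion, so $P_\bw\iota_\bw = \mathrm{id}_{Y_\bw}$. Since $V_\bw$ has range in $W = \pi(X)$, a short computation gives $(JI_{\bw,q}P_\bw U\pi)V_\bw = JI_{\bw,q}$. Consequently, if $T := JI_{\bw,q}P_\bw U\pi \in [\mathcal{G}_{I_{\bv,q}}](X,Y)$, then right-composition with $V_\bw$ forces $JI_{\bw,q} \in [\mathcal{G}_{I_{\bv,q}}](Y_\bw, Y)$. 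Assume this toward a contradiction. For any $\epsilon > 0$, pick $B = \sum_{i=1}^N B_i I_{\bv,q} A_i$ with $\norm{B - JI_{\bw,q}} < \epsilon$, where $A_i \in \mathcal{L}(Y_\bw, Y_\bv)$ and $B_i \in \mathcal{L}(Z_q, Y)$; after rescaling take $\norm{A_i}\leq 1$.

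Next, for each $m\in\mathbb{Z}^+$ let $\sigma_m : F_{p,w_m}^{(m)} \hookrightarrow Y_\bw$ denote the canonical inclusion. The key move is to apply Lemma \ref{Lemma-4} with $G_m = F_{p,w_m}^{(m)}$, $Y = Y_\bv$, and $B_m = A_i\sigma_m$. Condition \eqref{8} follows from Corollary \ref{1-iii-corollary}, which gives $\varphi_{F_{p,w_m}^{(m)}}(k) \leq k^{1/p}$ and hence $\sup_{m\geq k}\varphi_{G_m}(k)/k \leq k^{1/p-1}\to 0$. For condition \eqref{9}, combining $w_m \geq m^{-\eta}$ with Corollary \ref{1-iii-corollary} yields $\varphi_{F_{p,w_m}^{(m)}}(m) \leq m^{1/2}/w_m$, while Lemma \ref{Lemma-3} gives $\lambda_{Y_\bv}(cm) \geq \lfloor\sqrt{cm/2}\rfloor/(K_p\sqrt{2}\,v_{\lfloor\sqrt{cm/2}\rfloor})$; the ratio is thus a constant multiple of $v_{\sqrt{cm/2}}/w_m$, which tends to $0$ by hypothesis \eqref{12} (applied with a suitable $c'\in(0,1)$ and the monotonicity of $\bv$ to absorb the factor). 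Lemma \ref{Lemma-4} then gives $\frac{1}{m}\sum_{j=1}^m \norm{A_i\sigma_m f_j^{(m)}}_\infty \to 0$ as $m\to\infty$, for each $i$, where $\norm{\cdot}_\infty$ is the sup of the coefficients in the canonical basis of $Y_\bv$.

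The main obstacle is to convert this coefficientwise smallness into smallness of $\norm{I_{\bv,q}(A_i\sigma_m f_j^{(m)})}_{Z_q}$, and this is where Lemma \ref{Lemma-5} enters. Hypothesis \eqref{12} forces $v_n\to 0$, so for any $\delta>0$ there exists $n_0(\delta)$ with $v_n \leq \delta^{\frac{1}{2}-\frac{1}{p'}}$ for $n\geq n_0$. Split $y\in Y_\bv$ as $y^{<n_0}+y^{\geq n_0}$ along the $\ell_p$-direct sum. Lemma \ref{Lemma-5} applies blockwise to the tail; combined with the outer $\ell_q$ (or $c_0$) norm on $Z_q$, this gives $\norm{I_{\bv,q}y^{\geq n_0}}_{Z_q} \leq C\delta^\beta$ for a fixed $\beta\in(0,1]$ whenever $\norm{y}_{Y_\bv}\leq 1$ and $\norm{y}_\infty \leq \delta$ (explicitly, $\beta = \alpha/q$ in the notation of Lemma \ref{Lemma-5} when $q<\infty$, and $\beta=\frac{1}{2}-\frac{1}{p'}$ when $q=\infty$). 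The head $y^{<n_0}$ is trapped in a fixed finite-dimensional subspace, so $P_{<n_0}A_i\sigma_m$ has rank bounded independently of $m$, making its average contribution vanish as $m\to\infty$. Chaining these bounds (Chebyshev on $j$ to isolate indices with small sup-norm, Jensen for the concave exponent $\beta$) yields $\frac{1}{m}\sum_{j=1}^m \norm{B\sigma_m f_j^{(m)}}_Y \to 0$. On the other hand, $I_{\bw,q}\sigma_m f_j^{(m)} = e_j^{(m)}$, the $j$th canonical vector of $\ell_2^m\subseteq Z_q$, and $J$ being an embedding gives $\norm{Je_j^{(m)}}_Y \geq c>0$ uniformly in $j,m$; together with $\norm{B-JI_{\bw,q}}<\epsilon$ this forces $\norm{B\sigma_m f_j^{(m)}}_Y \geq c-\epsilon$. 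Choosing $\epsilon<c$ at the outset yields the contradiction.

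The complexified assertion is then automatic: if $(JI_{\bw,q}P_\bw U\pi)_\mathbb{C}$ lay in $[\mathcal{G}_{(I_{\bv,q})_\mathbb{C}}](X_\mathbb{C}, Y_\mathbb{C})$, Proposition \ref{complexification-ideal} would express it as $\begin{bmatrix} R & -S\\ S & R\end{bmatrix}$ with $R,S\in[\mathcal{G}_{I_{\bv,q}}](X,Y)$; matching this against the block form $\begin{bmatrix} T & 0\\0 & T\end{bmatrix}$ of the complexification forces $R = T = JI_{\bw,q}P_\bw U\pi$, contradicting the real case just established.
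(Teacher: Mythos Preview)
Your overall strategy matches the paper's: reduce to controlling averages $\frac{1}{m}\sum_j\|I_{\bv,q}A_i\sigma_m f_j^{(m)}\|_{Z_q}$, verify the hypotheses of Lemma~\ref{Lemma-4} to get smallness of the coefficientwise sup-norm in average, and then invoke Lemma~\ref{Lemma-5} on the tail blocks. Your initial reduction via right-composition with $V_\bw$ is a clean simplification; the paper instead works with functionals $\Phi_m\in\mathcal{L}(X,Y)^*$ and a weak* accumulation point, but the content is the same. The complexification step is handled correctly via Proposition~\ref{complexification-ideal}.

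There is, however, a genuine gap in your treatment of the head $y^{<n_0}$. The claim that ``$P_{<n_0}A_i\sigma_m$ has rank bounded independently of $m$, making its average contribution vanish'' is a non sequitur: bounded rank alone does not force $\frac{1}{m}\sum_j\|I_{\bv,q}(A_i\sigma_m f_j^{(m)})^{<n_0}\|_{Z_q}\to 0$. (Consider the rank-one map sending every $f_j^{(m)}$ to a fixed unit vector.) What actually kills the head is the elementary coordinate bound
\[
\|I_{\bv,q}y^{<n_0}\|_{Z_q}\;\le\;n_0^{1/2+1/q}\,\|y\|_\infty,
\]
valid for every $y\in Y_\bv$, combined with the conclusion of Lemma~\ref{Lemma-4} that $\frac{1}{m}\sum_j\|A_i\sigma_m f_j^{(m)}\|_\infty\to 0$. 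This is exactly how the paper proceeds: fix $\varrho$, choose $n_0=n_0(\varrho)$, let the head vanish as $m\to\infty$ via this sup-norm bound (with $n_0$ fixed), then send $\varrho\to 0$ to dispose of the tail. Once you replace the bounded-rank sentence with this estimate, your argument goes through.
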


\begin{proof}By Proposition \ref{complexification-ideal}, it is sufficient to consider the real case.  Let $(f_j^{(n)})_{j=1}^n$ denote the canonical basis for $F_n:=F_{p,v_n}^{(n)}$, and let $(g_j^{(n)})_{j=1}^n$ denote the canonical basis for $G_n:=F_{p,w_n}^{(n)}$, which we may view as vectors in $Y_{\bv}$ and $Y_{\bw}$, respectively.  Set $h_j^{(n)}:=JI_{\bw}P_{\bw}g_j^{(n)}=JI_{\bw,q}g_j^{(n)}\in Y$, so that
\[\{h_j^{(n)}:n\in\mathbb{Z}^+,j=1,\cdots,n\}\]
is a copy of the standard basis for $Z_q$ as embedded into $Y$.  Then let
\[\{h_j^{(n)*}:n\in\mathbb{Z}^+,j=1,\cdots,n\}\subseteq Y^*\]
denote their biorthogonal functionals, which are bounded by some constant $K\in[1,\infty)$ since the $h_j^{(n)}$'s are seminormalized.  For each $m\in\mathbb{Z}^+$, define a continuous linear functional $\Phi_m\in\mathcal{L}(X,Y)^*$ by the rule
\[\Phi_m(V)=\frac{1}{m}\sum_{i=1}^mh_i^{(m)*}(VU^{-1}g_i^{(m)}),\;\;\;V\in\mathcal{L}(X,Y),\]
where here we are viewing $Y_{\bw}$ as a subspace of $\ell_p=(\oplus\ell_p^{3^n})_{\ell_p}$ and hence the $g_i^{(m)}$'s as vectors in $\ell_p$.  Notice that these functionals are uniformly bounded by $K\norm{U^{-1}}$ so that they have a weak*-accumulation point $\Phi\in\mathcal{L}(X,Y)^*$.  Since $\Phi_m(JI_{\bw,q}P_{\bw}U\pi)=1$ for all $m\in\mathbb{Z}^+$ we have $\Phi(JI_{\bw,q}P_{\bw}U\pi)=1$ as well.

Now let $A\in\mathcal{L}(Z_q,Y)$ and $B\in\mathcal{L}(X,Y_{\bv})$ with $\norm{A}\leq 1$ and $\norm{B}\leq\frac{1}{\norm{U^{-1}}}$.  Eventually, we will show that
\begin{equation}\label{13}\lim_{m\to\infty}\Phi_m(AI_{\bv,q}B)=0.\end{equation}
From this it will follow that $\Phi(AI_{\bv,q}B)=0$, and hence $JI_{\bw,q}P_{\bw}U\pi\notin[\mathcal{G}_{I_{\bv,q}}](X,Y)$ as desired.

Let $B_m:U^{-1}G_m\to Y_{\bv}$ denote the restriction of $B$ to $U^{-1}G_m$.  We claim that
\begin{equation}\label{14}\lim_{m\to\infty}\frac{1}{m}\sum_{i=1}^m\norm{B_mU^{-1}g_i^{(m)}}_\infty=0,\end{equation}
where, as in Lemma \ref{Lemma-4}, we define
\[\norm{y}_\infty=\sup\left\{|y_{n,j}|:n\in\mathbb{Z}^+,j=1,\cdots,n\right\}\]
for any $y=\sum_{n=1}^\infty\sum_{j=1}^ny_{n,j}f_j^{(n)}\in Y_{\bv}$.  By Corollary \ref{1-iii-corollary} we have $\varphi_{G_m}(k)\leq k^{1/p}$ for all $1\leq k\leq m$, which means condition \eqref{8} in Lemma \ref{Lemma-4} holds.  Notice that $Y_{\bv}$ has a normalized 1-unconditional basis, since it is formed from the $\ell_p$-sum of spaces with normalized 1-unconditional bases.  Furthermore, $\norm{B_mU^{-1}}\leq 1$ for all $m\in\mathbb{Z}^+$.  Thus, if we can verify condition \eqref{9}, then we will be able to apply Lemma \ref{Lemma-4} to get \eqref{14}.  Indeed, by Lemma \ref{Lemma-3} we have
\[\lambda_{Y_{\bv}}(k)\geq\frac{\lfloor\sqrt{k/2}\rfloor}{K_p\sqrt{2}\cdot v_{\lfloor\sqrt{k/2}\rfloor}}\geq\frac{\sqrt{k/2}-1}{v_{\sqrt{k/2}}K_p\sqrt{2}}\]
for all $k\in\mathbb{Z}^+$.  Recall that $\lambda_{Y_{\bv}}$ is a nondecreasing function extended to $[1,\infty)$ via linear interpolation.  Since also for each $x\in[1,\infty)$ we have $\sqrt{\lfloor x\rfloor/2}\geq\lfloor\sqrt{x/2}\rfloor$ and hence
\[v_{\sqrt{\lfloor x\rfloor/2}}\leq v_{\lfloor\sqrt{x/2}\rfloor}=v_{\sqrt{x/2}},\]
this means
\[\lambda_{Y_{\bv}}(x)\geq\lambda_{Y_{\bv}}(\lfloor x\rfloor)\geq\frac{\sqrt{\lfloor x\rfloor/2}-1}{v_{\sqrt{\lfloor x\rfloor/2}}K_p\sqrt{2}}\geq\frac{\sqrt{(x-1)/2}-1}{v_{\sqrt{x/2}}K_p\sqrt{2}}.\]
Notice that
\[\frac{\sqrt{(x-1)/2}-1}{v_{\sqrt{x/2}}K_p\sqrt{2}}\cdot\frac{3K_pv_{\sqrt{x/2}}}{x^{1/2}}=\frac{3\sqrt{(x-1)/2}-3}{\sqrt{2x}}=\frac{3}{2}\sqrt{1-\frac{1}{x}}-\frac{3}{\sqrt{2x}}\to\frac{3}{2}\]
as $x\to\infty$.  Thus, there is $\gamma\in[1,\infty)$ such that
\[\lambda_{Y_{\bv}}(x)\geq\frac{x^{1/2}}{3K_pv_{\sqrt{x/2}}}\]
for all $x\in(\gamma,\infty)$.  We also have, again by Corollary \ref{1-iii-corollary}, that $\varphi_{G_m}(m)\leq w_m^{-1}m^{1/2}$ for all $m\in\mathbb{Z}^+$.  Thus, for any $c>0$ and sufficiently large $m$, we have
\[\frac{\varphi_{G_m}(m)}{\lambda_{Y_{\bv}}(cm)}\leq\frac{3K_pv_{\sqrt{cm/2}}}{(cm)^{1/2}}\cdot w_m^{-1}m^{1/2}=\frac{3K_p}{c^{1/2}}\cdot\frac{v_{\sqrt{cm/2}}}{w_m}.\]
Notice that since $(v_n)$ is nonincreasing, if $c\geq 1$ then $v_{\sqrt{cm/2}}\leq v_{\sqrt{(1/2)m}}$ and hence
\[\frac{\varphi_{G_m}(m)}{\lambda_{Y_{\bv}}(cm)}\leq\frac{3K_p}{c^{1/2}}\cdot\frac{v_{\sqrt{cm/2}}}{w_m}\leq\frac{3K_p}{c^{1/2}}\cdot\frac{v_{\sqrt{(1/2)m}}}{w_m}\to 0\]
as $m\to\infty$, by assumption \eqref{12}.  Otherwise, $c/2\in(0,1)$ and so again by \eqref{12} we have
\[\frac{\varphi_{G_m}(m)}{\lambda_{Y_{\bv}}(cm)}\leq\frac{3K_p}{c^{1/2}}\cdot\frac{v_{\sqrt{cm/2}}}{w_m}=\frac{3K_p}{c^{1/2}}\cdot\frac{v_{\sqrt{(c/2)m}}}{w_m}\to 0.\]
Thus, condition \eqref{9} of Lemma \ref{Lemma-4} is satisfied, and \eqref{14} follows.

Now let us now prove \eqref{13}.  Set $t=\frac{1}{2}-\frac{1}{p'}$, and fix an arbitrary $\varrho\in(0,1)$.  Notice that by \eqref{12} we have $v_n\to 0$, and so we can find $n_0\in\mathbb{N}$ such that $v_n\leq\varrho^t$ for all $n\geq n_0$.  Then
\begin{equation}\label{15}|\Phi_m(AI_{\bv,q}B)|=\frac{1}{m}\left|\sum_{i=1}^mh_i^{(m)*}(AI_{\bv,q}BU^{-1}g_i^{(m)})\right|\leq\frac{K}{m}\sum_{i=1}^m\norm{I_{\bv,q}B_mU^{-1}g_i^{(m)}}_{Z_q}.\end{equation}
Select $1\leq i\leq m$, and then write
\[B_mU^{-1}g_i^{(m)}=\sum_{n=1}^\infty\sum_{j=1}^nx_{n,j}f_j^{(n)}\]
for scalars $x_{n,j}\in\mathbb{R}$.  Due to the fact that the basis $\{f_j^{(n)}:n\in\mathbb{Z}^+,j=1,\cdots,n\}$ is normalized and 1-unconditional, we have
\[\norm{B_mU^{-1}g_i^{(m)}}_\infty\leq\norm{B_mU^{-1}g_i^{(m)}}_{Y_{\bv}}\leq\norm{g_i^{(m)}}_{Y_{\bw}}=1\]
(cf., e.g., \cite[Lemma 1.49]{AA02}).  Now set
\[\sigma_i^{(m)}:=\varrho\vee\norm{B_mU^{-1}g_i^{(m)}}_\infty\leq 1,\]
and observe that $v_n\leq\sigma_i^{(m)t}$ for $n\geq n_0$.  Furthermore, $\max_j|x_{n,j}|\leq\sigma_i^{(m)}$, so that the conditions of Lemma \ref{Lemma-5} are satisfied for $(x_{n,j})_{j=1}^n$ when $n\geq n_0$.

We complete the proof by separately considering two cases, where $q=\infty$ and then where $q\neq\infty$.

{\bf Case $\boldsymbol{q=\infty}$.}

By Lemma \ref{Lemma-5} we have
\[\left(\sum_{j=1}^n|x_{n,j}|^2\right)^{1/2}\leq K_p\sigma_i^{(m)t}\;\;\;\;\text{ for all }n\geq n_0.\]
Thus,
\begin{multline*}\norm{I_{\bv,\infty}B_mU^{-1}g_i^{(m)}}_{Z_\infty}=\sup_{n\in\mathbb{Z}^+}\left(\sum_{j=1}^n|x_{n,j}|^2\right)^{1/2}\\\leq K_p\sigma_i^{(m)t}\vee\sup_{n\leq n_0}\left(\sum_{j=1}^n|x_{n,j}|^2\right)^{1/2}\leq K_p\sigma_i^{(m)t}\vee n_0^{1/2}\norm{B_mU^{-1}g_i^{(m)}}_\infty.\end{multline*}
Combining the above with \eqref{15}, we now have
\begin{multline*}|\Phi_m(AI_{\bv,\infty}B)|\leq\frac{K}{m}\sum_{i=1}^m\norm{I_{\bv,\infty}B_mU^{-1}g_i^{(m)}}_{Z_\infty}\\\leq\frac{K}{m}\sum_{i=1}^m\left[K_p\sigma_i^{(m)t}\vee n_0^{1/2}\norm{B_mU^{-1}g_i^{(m)}}_\infty\right]\\\leq\frac{KK_p}{m}\sum_{i=1}^m\sigma_i^{(m)t}+\frac{Kn_0^{1/2}}{m}\sum_{i=1}^m\norm{B_mU^{-1}g_i^{(m)}}_\infty\end{multline*}
Notice that $t\in(0,1/2)$ and so $\xi\mapsto\xi^t$ is a concave function on $(0,\infty)$.  Thus we have
\begin{multline*}|\Phi_m(AI_{\bv,\infty}B)|\leq\frac{KK_p}{m}\sum_{i=1}^m\sigma_i^{(m)t}+\frac{Kn_0^{1/2}}{m}\sum_{i=1}^m\norm{B_mU^{-1}g_i^{(m)}}_\infty\\\leq KK_p\left(\frac{1}{m}\sum_{i=1}^m\sigma_i^{(m)}\right)^t+\frac{Kn_0^{1/2}}{m}\sum_{i=1}^m\norm{B_mU^{-1}g_i^{(m)}}_\infty.\end{multline*}
Letting $m\to\infty$, condition \eqref{14} now gives us
\begin{multline*}|\Phi_m(AI_{\bv,\infty}B)|\leq KK_p\left(\frac{1}{m}\sum_{i=1}^m\sigma_i^{(m)}\right)^t+\frac{Kn_0^{1/2}}{m}\sum_{i=1}^m\norm{B_mU^{-1}g_i^{(m)}}_\infty\\=KK_p\left(\frac{1}{m}\sum_{i=1}^m\varrho\vee\norm{B_mU^{-1}g_i^{(m)}}_\infty\right)^t+\frac{Kn_0^{1/2}}{m}\sum_{i=1}^m\norm{B_mU^{-1}g_i^{(m)}}_\infty\to KK_p\varrho^t.\end{multline*}
Since $\varrho\in(0,1)$ was arbitrary, we have \eqref{13}.

{\bf Case $\boldsymbol{q\neq\infty}$.}

By Lemma \ref{Lemma-5} we have
\[\left(\sum_{j=1}^n|x_{n,j}|^2\right)^{q/2}\leq N\sigma_i^{(m)r}\norm{\sum_{j=1}^nx_{n,j}f_j^{(n)}}_{F_n}^p\;\;\;\;\text{ for all }n\geq n_0,\]
where $N=\max\{C_p^p,K_p^q\}$ and $r=\min\{\frac{q}{2}-\frac{p}{2},\frac{q}{2}-\frac{q}{p'}\}$.  This gives us
\begin{multline*}\norm{I_{\bv,q}B_mU^{-1}g_i^{(m)}}_{Z_q}=\left(\sum_{n=1}^\infty\left(\sum_{j=1}^n|x_{n,j}|^2\right)^{q/2}\right)^{1/q}\\\leq\left(\sum_{n=1}^{n_0}\left(\sum_{j=1}^n|x_{n,j}|^2\right)^{q/2}\right)^{1/q}+\left(\sum_{n>n_0}\left(\sum_{j=1}^n|x_{n,j}|^2\right)^{q/2}\right)^{1/q}\\\leq\left(\sum_{n=1}^{n_0}\left(\norm{B_mU^{-1}g_i^{(m)}}_\infty n_0^{1/2}\right)^q\right)^{1/q}+\left(\sum_{n>n_0}N\sigma_i^{(m)r}\norm{\sum_{j=1}^nx_{n,j}f_j^{(n)}}_{F_n}^p\right)^{1/q}\\\leq n_0^{1/2+1/q}\norm{B_mU^{-1}g_i^{(m)}}_\infty+N^{1/q}\sigma_i^{(m)r/q}\left(\sum_{n>n_0}\norm{\sum_{j=1}^nx_{n,j}f_j^{(n)}}_{F_n}^p\right)^{1/q}\\\leq n_0^{1/2+1/q}\norm{B_mU^{-1}g_i^{(m)}}_\infty+N^{1/q}\sigma_i^{(m)r/q}\norm{B_mU^{-1}g_i^{(m)}}_{Y_{\bv}}^{p/q}\\\leq n_0^{1/2+1/q}\norm{B_mU^{-1}g_i^{(m)}}_\infty+N^{1/q}\sigma_i^{(m)r/q}.\end{multline*}
Combining this with \eqref{15} and the concavity of $\xi\mapsto\xi^{r/q}$ (which follows from the fact that $r/q<1$), we get
\begin{multline*}|\Phi_m(AI_{\bv,q}B)|\leq\frac{K}{m}\sum_{i=1}^m\norm{I_{\bv,q}B_mU^{-1}g_i^{(m)}}_{Z_q}\\\leq\frac{K}{m}\sum_{i=1}^m\left(n_0^{1/2+1/q}\norm{B_mU^{-1}g_i^{(m)}}_\infty+N^{1/q}\sigma_i^{(m)r/q}\right)\\=\frac{Kn_0^{1/2+1/q}}{m}\sum_{j=1}^m\norm{B_mU^{-1}g_j^{(m)}}_\infty+\frac{KN^{1/q}}{m}\sum_{i=1}^m\sigma_i^{(m)r/q}\\\leq\frac{Kn_0^{1/2+1/q}}{m}\sum_{j=1}^m\norm{B_mU^{-1}g_j^{(m)}}_\infty+KN^{1/q}\left(\frac{1}{m}\sum_{i=1}^m\sigma_i^{(m)}\right)^{r/q}.\end{multline*}
Letting $m\to\infty$, condition \eqref{14} now gives us
\begin{multline*}|\Phi_m(AI_{\bv,q}B)|\leq \frac{Kn_0^{1/2+1/q}}{m}\sum_{j=1}^m\norm{B_mU^{-1}g_j^{(m)}}_\infty+KN^{1/q}\left(\frac{1}{m}\sum_{i=1}^m\sigma_i^{(m)}\right)^{r/q}\\\to KN^{1/q}\varrho^{r/q}.\end{multline*}
Since $\varrho\in(0,1)$ was arbitrary, we have \eqref{13}.\end{proof}

We will also need a basic fact about the existence of preduals.  We provide a short proof in lieu of a direct reference.

\begin{proposition}\label{predual}Let $X$ and $Y$ be Banach spaces, and suppose $Y^*$ is reflexive.  Then every operator in $\mathcal{L}(Y^*,X^*)$ is the dual of an operator in $\mathcal{L}(X,Y)$.  More precisely, if $T\in\mathcal{L}(Y^*,X^*)$ and $Y^*$ is reflexive then $T=S^*$ for some $S\in\mathcal{L}(X,Y)$.\end{proposition}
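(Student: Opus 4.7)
The plan is essentially to dualize and use the fact that $Y$ is reflexive, which I need to extract first. The key observation is that reflexivity of $Y^*$ forces reflexivity of $Y$: the canonical embedding $\iota_Y : Y \to Y^{**}$ is always an isometric linear embedding, and $Y$ is reflexive if and only if $\iota_Y$ is surjective, which is equivalent to $Y^*$ being reflexive (a standard consequence of the fact that $\iota_Y^*$ is a left inverse of $\iota_{Y^*}$). So I may assume $Y = Y^{**}$ via $\iota_Y$.

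Given $T \in \mathcal{L}(Y^*, X^*)$, first I would form the Banach-space adjoint $T^* \in \mathcal{L}(X^{**}, Y^{**})$. Using the identification $\iota_Y : Y \cong Y^{**}$, I can view $T^*$ as an operator into $Y$, and then define
\[
S := \iota_Y^{-1} \circ T^* \circ \iota_X \;\in\; \mathcal{L}(X, Y),
\]
where $\iota_X : X \to X^{**}$ is the canonical embedding. The candidate predual is $S$, and the remaining task is to check that $S^* = T$.

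The verification is a routine unwinding of definitions: for any $y^* \in Y^*$ and $x \in X$,
\[
(S^* y^*)(x) \;=\; y^*(S x) \;=\; (\iota_Y S x)(y^*) \;=\; (T^* \iota_X x)(y^*) \;=\; (\iota_X x)(T y^*) \;=\; (T y^*)(x),
\]
so $S^* y^* = T y^*$ for all $y^* \in Y^*$, giving $S^* = T$ as required. I do not anticipate a genuine obstacle here; the only place that requires any care is being consistent with the identification $Y = Y^{**}$ under $\iota_Y$, which is exactly where the hypothesis that $Y^*$ (equivalently $Y$) is reflexive gets used — without it, $T^* \circ \iota_X$ would only land in $Y^{**}$ and one could not produce an operator into $Y$ itself.
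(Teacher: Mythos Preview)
Your proof is correct. The paper takes a different route: it invokes the standard characterization that an operator $T\in\mathcal{L}(Y^*,X^*)$ has a predual in $\mathcal{L}(X,Y)$ if and only if it is weak*-to-weak* continuous, and then verifies this topological condition by noting that norm continuity gives weak-to-weak continuity, hence weak-to-weak* continuity, and finally that reflexivity of $Y^*$ makes the weak and weak* topologies on $Y^*$ coincide. Your argument is more direct and self-contained: you explicitly write down the candidate $S=\iota_Y^{-1}\circ T^*\circ\iota_X$ and check $S^*=T$ by hand, so you avoid appealing to the weak*-continuity criterion (which the paper cites from an external reference). The paper's approach is slightly more conceptual, while yours is constructive and needs no outside input beyond the fact that $Y^*$ reflexive implies $Y$ reflexive; either is perfectly adequate for this short lemma.
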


\begin{proof}Recall that if an operator $T\in\mathcal{L}(Y^*,X^*)$ is weak*-to-weak* continuous, then it has a predual $S=T_*\in\mathcal{L}(X,Y)$ (cf., e.g., \cite[Theorem 3.1.11]{Me98}).  We will show that this condition is satisfied whenever $Y^*$ is reflexive.  Since $T$ is norm-to-norm continuous, it is therefore weak-to-weak continuous (cf., e.g., \cite[Theorem II.5]{Di84}).  However, every weak*-open set is also weak-open (cf., e.g., \cite[pp12-3]{Di84}), which means $T$ is weak-to-weak* continuous.  Since $Y^*$ is reflexive, the weak and weak* topologies on $Y^*$ coincide.  Thus, $T$ is weak*-to-weak* continuous as desired.\end{proof}

Let $M=\{m_1<m_2<\cdots\}$ be an infinite subset of $\mathbb{Z}^+$.  If $p\in(1,2)$ then we let $\eta=\frac{1}{p}-\frac{1}{2}$, and then define the sequence $\bw_M^p=(w_n)_{n=1}^\infty$ in $(0,1]$ as follows.  We set $w_1=1$ and $w_{2^{3^{m_k}}}=2^{-\eta k}$ for each $k\in\mathbb{Z}^+$, and extend to the rest of $\mathbb{Z}^+$ via linear interpolation.  Now we shall fix a chain $\mathcal{C}$, with cardinality of the continuum, of subsets of $\mathbb{Z}^+$ satisfying the property that if $N$ and $M$ lie in $\mathcal{C}$ then either $N\subseteq M$ and $|M\setminus N|=\infty$, or else $M\subseteq N$ and $|M\setminus N|=\infty$.  This is not hard to achieve.  For instance, given $r\in(0,1)$, let $(t_{r,n})_{n=1}^\infty$ be a strictly increasing sequence of rational numbers such that $t_{r,n}\to r$ as $n\to\infty$.  Let $f:\mathbb{Q}\to\mathbb{Z}^+$ be any injective map, and define $M_r=\{f(t_{s,n}):s\in(0,r),n\in\mathbb{Z}^+\}$ for each $r\in(0,1)$.  Then for any $0<r_1<r_2<1$ we have $M_{r_1}\subseteq M_{r_2}$ and $|M_{r_2}\setminus M_{r_1}|=\infty$.  If necessary, we will delete a maximal and minimal element from $\mathcal{C}$.

Thus, we can state and prove the following result.

\begin{theorem}\label{main1-utility}Let $p\in(1,2)$ and $q\in(p,\infty]$.  Suppose $X$ is a real Banach space containing a complemented copy of $\ell_p$, and that $Y$ is a real Banach space containing a copy of $\ell_q$ if $q\neq\infty$, or of $c_0$ if $q=\infty$.  Let $\mathcal{C}$ be a chain as described above.  Then
\[[\mathcal{G}_{I_{\bw_N^p,q}}](X,Y)\subsetneq[\mathcal{G}_{I_{\bw_M^p,q}}](X,Y)\subsetneq\mathcal{FSS}(X,Y)\]
and
\[[\mathcal{G}_{I_{\bw_N^p,q}}](X_\mathbb{C},Y_\mathbb{C})\subsetneq[\mathcal{G}_{I_{\bw_M^p,q}}](X_\mathbb{C},Y_\mathbb{C})\subsetneq\mathcal{FSS}(X_\mathbb{C},Y_\mathbb{C})\]
for all $M\subseteq N$ lying in $\mathcal{C}$.

If furthermore $X$ is reflexive, then
\[[\mathcal{G}^*_{I_{\bw_N^p,q}}](Y^*,X^*)\subsetneq[\mathcal{G}^*_{I_{\bw_M^p,q}}](Y^*,X^*)\subsetneq\mathcal{SSCS}(Y^*,X^*)\]
and
\[[\mathcal{G}_{I_{\bw_N^p,q}}^*](Y^*_\mathbb{C},X^*_\mathbb{C})\subsetneq[\mathcal{G}^*_{I_{\bw_M^p,q}}](Y^*_\mathbb{C},X^*_\mathbb{C})\subsetneq\mathcal{SSCS}(Y^*_\mathbb{C},X^*_\mathbb{C})\]
for all $M\subseteq N$ lying in $\mathcal{C}$.\end{theorem}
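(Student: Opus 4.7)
The plan is to derive all four chains of strict containments from a single application of Lemma \ref{Theorem-6}, combined with a soft factorization argument for the weak containments and a dualization argument for the dual case. The main obstacle will be the asymptotic verification of the ratio hypothesis \eqref{12} for the pair $(\bw_N^p, \bw_M^p)$ whenever $M \subsetneq N$ in $\mathcal{C}$; the doubly exponential scaling $w_{2^{3^{m_k}}} = 2^{-\eta k}$ is engineered precisely so that the infiniteness of $N \setminus M$ forces $\bw_N^p$ to undergo many extra stepdowns in every window accessed by $\sqrt{c\cdot}$.

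First I would secure the weak containments in the real case. For $M \subseteq N$ in $\mathcal{C}$ the inequality $2^{3^{n_k}} \leq 2^{3^{m_k}}$ gives $(\bw_N^p)_n \leq (\bw_M^p)_n$ for every $n$, so monotonicity of $\norm{\cdot}_{p',v}$ in $v$, together with the duality $F_{p,v}^{(n)} \cong E_{p',v}^{(n)*}$, makes the formal identity $\iota \colon Y_{\bw_N^p} \to Y_{\bw_M^p}$ a contraction. The factorization $I_{\bw_N^p,q} = I_{\bw_M^p,q}\iota$ then gives $[\mathcal{G}_{I_{\bw_N^p,q}}] \subseteq [\mathcal{G}_{I_{\bw_M^p,q}}]$. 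For the ceiling $[\mathcal{G}_{I_{\bw_M^p,q}}] \subseteq \mathcal{FSS}$, the ideal and closure properties of $\mathcal{FSS}$ reduce everything to showing $I_{\bw_M^p,q} \in \mathcal{FSS}$, which follows from Milman's result on $I_{p,q}$ (or $I_{p,0}$) together with the block-diagonal realization of $I_{\bw_M^p,q}$ inside $\ell_p = (\oplus_n \ell_p^{3^n})_{\ell_p}$.

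Next I would obtain the strict inclusions. Fix $M \subsetneq N$ in $\mathcal{C}$ and set $T = JI_{\bw_M^p,q}P_{\bw_M^p}U\pi \in \mathcal{G}_{I_{\bw_M^p,q}}(X,Y)$. Lemma \ref{Theorem-6}, applied with $\bv = \bw_N^p$ and $\bw = \bw_M^p$, will yield $T \notin [\mathcal{G}_{I_{\bw_N^p,q}}](X,Y)$ and simultaneously $T_\mathbb{C} \notin [\mathcal{G}_{I_{\bw_N^p,q}}](X_\mathbb{C},Y_\mathbb{C})$, provided the ratio condition is verified. The verification proceeds as follows: at $n = 2^{3^{m_k}}$ one has $w_n = 2^{-\eta k}$, and writing $m_k = n_{r(k)}$ the quantity $\sqrt{cn}$ falls near the $r(k)$-th stepdown of $\bw_N^p$, so $v_{\sqrt{cn}}$ is of order $2^{-\eta r(k)}$ and the ratio is of order $2^{-\eta(r(k)-k)}$; the infiniteness of $N \setminus M$ forces $r(k) - k \to \infty$, driving the ratio to $0$ (with linear interpolation between thresholds contributing only bounded factors, and similar bookkeeping handling $n$ between consecutive $2^{3^{m_k}}$). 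For the ceiling inclusion $[\mathcal{G}_{I_{\bw_M^p,q}}] \subsetneq \mathcal{FSS}$, I would pick $M' \subsetneq M$ in $\mathcal{C}$ (available since the minimum of $\mathcal{C}$ was removed) and take $T' = JI_{\bw_{M'}^p,q}P_{\bw_{M'}^p}U\pi$: the previous step puts $T'$ in $\mathcal{FSS}$, and a fresh application of Lemma \ref{Theorem-6} with $(\bv, \bw) = (\bw_M^p, \bw_{M'}^p)$ places it outside $[\mathcal{G}_{I_{\bw_M^p,q}}]$. The complex-side chains require no extra work because Lemma \ref{Theorem-6} directly supplies the complex non-membership, and Propositions \ref{complexification-ideal} and \ref{complexification-FSS} transfer the remaining containments.

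Finally, for the $\mathcal{SSCS}$-chains in $\mathcal{L}(Y^*, X^*)$, I would observe that $S \mapsto S^*$ is an isometric linear embedding $\mathcal{L}(X,Y) \hookrightarrow \mathcal{L}(Y^*, X^*)$ with norm-closed image, so that $[\mathcal{G}^*_{I_{\bw,q}}](Y^*,X^*) = \{S^* : S \in [\mathcal{G}_{I_{\bw,q}}](X,Y)\}$; reflexivity of $X$ enters through Proposition \ref{predual} to ensure that $\mathcal{G}^*_{I_{\bw,q}}$ coincides with the class of duals of operators in $\mathcal{G}_{I_{\bw,q}}$. Dualizing the real-case strict chain then produces the desired strict chain in $\mathcal{L}(Y^*, X^*)$: if $T^*$ were to lie in $[\mathcal{G}^*_{I_{\bw_N^p,q}}]$, the unique predual $T$ would contradict the first part, and $(T')^*$ lies in $\mathcal{SSCS}$ by the $\mathcal{FSS}$\text{-}$\mathcal{SSCS}$ duality while escaping $[\mathcal{G}^*_{I_{\bw_M^p,q}}]$ for the same reason. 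The complex dual version follows from the same argument together with Propositions \ref{complexification-ideal} and \ref{complexification-FSS}.
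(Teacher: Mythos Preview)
Your argument for the first two chains (the $\mathcal{FSS}$ side) follows the same route as the paper and is essentially correct, modulo two minor points: the formal identity $\iota:Y_{\bw_N^p}\to Y_{\bw_M^p}$ is only bounded by $K_p$, not a contraction (you need the $K_p$-equivalence \eqref{4}, not an isometry); and your justification that $I_{\bw_M^p,q}\in\mathcal{FSS}$ via ``Milman's result on $I_{p,q}$ together with the block-diagonal realization'' does not go through as stated, since $I_{\bw,q}$ does not visibly factor through $I_{p,q}$ and not every operator $\ell_p\to\ell_q$ is $\mathcal{FSS}$ (e.g.\ the Pe{\l}czy\'nski decomposition operator is not). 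The paper instead cites \cite[Proposition~8]{SZ14} for $q<\infty$ and factors $I_{\bw,\infty}$ through the $\mathcal{FSS}$ map $I_{\bw,2}$ when $q=\infty$.

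The genuine gap is in your dualization step. You assert the identity
\[
[\mathcal{G}^*_{I_{\bw,q}}](Y^*,X^*)=\{S^*:S\in[\mathcal{G}_{I_{\bw,q}}](X,Y)\},
\]
invoking Proposition~\ref{predual} via reflexivity of $X$. But Proposition~\ref{predual} produces a predual of $R\in\mathcal{L}(Y^*,X^*)$ only when $Y$ (not $X$) is reflexive, and here $Y$ need not be: indeed when $q=\infty$ it contains $c_0$. So an element $R\in\mathcal{G}^*_{I_{\bw,q}}(Y^*,X^*)$ has $R^*$ factoring through $I_{\bw,q}$ as a map $X\to Y^{**}$, but there is no reason this lands in $\kappa_Y(Y)$, and hence no reason $R$ is the adjoint of anything in $\mathcal{L}(X,Y)$. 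Consequently, from $T\notin[\mathcal{G}_{I_{\bw_N^p,q}}](X,Y)$ you cannot conclude $T^*\notin[\mathcal{G}^*_{I_{\bw_N^p,q}}](Y^*,X^*)$. The paper avoids this by applying Lemma~\ref{Theorem-6} a second time with $Y^{**}$ in place of $Y$ (legitimate since $Z_q$ embeds in $Y\hookrightarrow Y^{**}$), obtaining directly $\kappa_Y T\notin[\mathcal{G}_{I_{\bw_N^p,q}}](X,Y^{**})$; reflexivity of $X$ then enters only to identify $X^{**}$ with $X$ so that $(T^*)^*=\kappa_Y T$, which is exactly the contradiction needed.
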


\begin{proof}It was shown in the proof of \cite[Theorem A]{SZ14} that if $M\subseteq N\subseteq\mathbb{Z}^+$ with $N\setminus M$ also infinite, then the sequences $\bv=\bw_N^p=(v_n)_{n=1}^\infty$ and $\bw=\bw_M^p=(w_n)_{n=1}^\infty$ satisfy condition \eqref{12} of Lemma \ref{Theorem-6}, as well as the condition that $v_n\geq n^{-\eta}$ and $w_n\geq n^{-\eta}$ for all $n\in\mathbb{Z}^+$, where $\eta=\frac{1}{p}-\frac{1}{2}$.  Recall from Pe\l czy\'{n}ski's Decomposition Theorem  (cf., e.g., \cite[p73]{LT77}) that $Z_q$ is isomorphic to $\ell_q$ when $q\neq\infty$, and if instead $q=\infty$ we have that $Z_\infty$ embeds into $c_0$ via the map $\theta$ defined in the introduction.  Thus, in either case, $Z_q$ embeds into $Y$, and the conditions of Lemma \ref{Theorem-6} are satisfied.

Let us now recall some observations from the proof of \cite[Corollary 7]{SZ14}.  Indeed, from condition \eqref{12} it follows that $v_n\leq w_n$ for sufficiently large $n$.  Together with condition \eqref{4}, this means the canonical basis of $F_{p,\bw}^{(n)}$ is $K_p$-dominated by that of $F_{p,\bv}^{(n)}$ for sufficiently large $n$.  It follows that the formal inclusion map
\[I_{Y_{\bv},Y_{\bw}}:Y_{\bv}\to Y_{\bw}\]
is bounded.  Together with $I_{\bv,q}=I_{\bw,q}I_{Y_{\bv},Y_{\bw}}$, we get
\[[\mathcal{G}_{I_{\bv,q}}](X,Y)\subseteq[\mathcal{G}_{I_{\bw,q}}](X,Y),\]
and, by Proposition \ref{complexification-ideal},
\[[\mathcal{G}_{I_{\bv,q}}](X_\mathbb{C},Y_\mathbb{C})\subseteq[\mathcal{G}_{I_{\bw,q}}](X_\mathbb{C},Y_\mathbb{C}).\]
These inclusions are seen to be strict by applying Lemma \ref{Theorem-6}.

We also need to observe that $I_{\bw,q}$ is class $\mathcal{FSS}$, from which will follow the inclusions
\[[\mathcal{G}_{I_{\bw,q}}](X,Y)\subsetneq\mathcal{FSS}(X,Y),\]
and
\[[\mathcal{G}_{I_{\bw,q}}](X_\mathbb{C},Y_\mathbb{C})\subsetneq\mathcal{FSS}(X_\mathbb{C},Y_\mathbb{C}).\]
(These inclusions will be strict since we have deleted a maximal element from $\mathcal{C}$.)  Indeed, it has already been shown in \cite[Proposition 8]{SZ14} that the real version of $I_{\bw,q}$ is $\mathcal{FSS}$ when $q\neq\infty$, and in case $q=\infty$ then we see that the real version of $I_{\bw,\infty}$ is still $\mathcal{FSS}$ since it factors through the $\mathcal{FSS}$ map $I_{\bw,2}$.  Applying Proposition \ref{complexification-FSS} covers the complexification case.

Finally, let us consider the case where $X$ is reflexive.  Notice that $Y^{**}$ contains a copy of $Y$ and hence of $Z_q$, satisfying the conditions of Lemma \ref{Theorem-6} for $Y^{**}$ in place of $Y$.  Hence, there exist operators
\[T\in\mathcal{G}_{I_{\bw,q}}(X,Y^{**})\setminus[\mathcal{G}_{I_{\bv,q}}](X,Y^{**})\]
and
\[\widehat{T}\in\mathcal{G}_{I_{\bw,q}}(X_\mathbb{C},Y^{**}_\mathbb{C})\setminus[\mathcal{G}_{I_{\bv,q}}](X_\mathbb{C},Y_\mathbb{C}^{**}).\]
Since $X$ and hence also $X_\mathbb{C}$ are reflexive, by Proposition \ref{predual} we have $T=S^*$ for some $S\in\mathcal{L}(Y^*,X^*)$ and $\widehat{T}=\widehat{S}^*$ for some $\widehat{S}\in\mathcal{L}(Y_\mathbb{C}^*,X_\mathbb{C}^*)$.  It follows that
\[S\in\mathcal{G}_{I_{\bw,q}}^*(Y^*,X^*)\setminus[\mathcal{G}_{I_{\bv,q}}]^*(Y^*,X^*)\]
and
\[\widehat{S}\in\mathcal{G}_{I_{\bw,q}}^*(Y^*_\mathbb{C},X^*_\mathbb{C})\setminus[\mathcal{G}_{I_{\bv,q}}]^*(Y_\mathbb{C}^*,X_\mathbb{C}^*).\]
We must also have $S\notin[\mathcal{G}_{I_{\bv,q}}^*](Y^*,X^*)$, otherwise there would be $S_n\in\mathcal{G}_{I_{\bv,q}}^*(Y^*,X^*)$ such that $S_n\to S$ and hence $S_n^*\to S^*$ in norm, contradicting the fact that $S^*=T\notin[\mathcal{G}_{I_{\bv,q}}](X,Y^{**})$.  For the same reason, $\widehat{S}\notin[\mathcal{G}_{I_{\bv,q}}^*](Y^*_\mathbb{C},X^*_\mathbb{C})$.  It follows that the inclusions
\[[\mathcal{G}_{I_{\bv,q}}^*](Y^*,X^*)\subseteq[\mathcal{G}_{I_{\bw,q}}^*](Y^*,X^*),\]
and
\[[\mathcal{G}_{I_{\bv,q}}^*](Y^*_\mathbb{C},X^*_\mathbb{C})\subseteq[\mathcal{G}_{I_{\bw,q}}^*](Y^*_\mathbb{C},X^*_\mathbb{C})\]
are both strict.  The proof is then complete as we consider the full duality between $\mathcal{FSS}$ and $\mathcal{SSCS}$ (cf., e.g., \cite[Theorem 4]{Pl04}), together with the fact that we deleted a maximal and minimal element from $\mathcal{C}$.\end{proof}

Before proving the main Theorem \ref{main1}, let us study the relationship between closed subideals of $\mathcal{L}(X,Y)$ and closed ideals in $\mathcal{L}(X\oplus Y)$.  Recall again that if $\mathcal{A}$ is a subset of $\mathcal{L}(W,Z)$ then we denote by $\mathcal{G}_\mathcal{A}(X,Y)$ the set of all operators in $\mathcal{L}(X,Y)$ factoring through some operator in $\mathcal{A}$.

\begin{proposition}\label{order-isomorphism}Let $X$ and $Y$ denote Banach spaces.  For each closed subideal $\mathcal{I}$ in $\mathcal{L}(X,Y)$, we define
\[\Psi(\mathcal{I}):=[\mathcal{G}_\mathcal{I}](X\oplus Y),\]
the closed linear span of operators acting on $X\oplus Y$ and factoring through elements of $\mathcal{I}$.  Then $\Psi$ is an order isomorphism between the closed subideals in $\mathcal{L}(X,Y)$ and the closed ideals in $\mathcal{L}(X\oplus Y)$ of the form $\Psi(\mathcal{I})$.\end{proposition}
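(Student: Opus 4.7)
The plan is to produce an explicit candidate inverse for $\Psi$ and show the composition is the identity on closed subideals. Let $i_X,i_Y$ denote the canonical inclusions of $X$ and $Y$ into $X\oplus Y$ and $\pi_X,\pi_Y$ the canonical projections. For a subset $\mathcal{J}\subseteq\mathcal{L}(X\oplus Y)$, define
\[\Phi(\mathcal{J}):=\{T\in\mathcal{L}(X,Y):i_YT\pi_X\in\mathcal{J}\}.\]
I aim to show $\Phi(\Psi(\mathcal{I}))=\mathcal{I}$ for every closed subideal $\mathcal{I}$ of $\mathcal{L}(X,Y)$; from this the order isomorphism follows at once.

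First I would verify that $\Psi(\mathcal{I})$ is a genuine closed ideal of $\mathcal{L}(X\oplus Y)$. For any $V=BSA\in\mathcal{G}_\mathcal{I}(X\oplus Y)$, with $S\in\mathcal{I}$, $A\in\mathcal{L}(X\oplus Y,X)$, $B\in\mathcal{L}(Y,X\oplus Y)$, and any $C,D\in\mathcal{L}(X\oplus Y)$, the product $DVC=(DB)S(AC)$ factors through $S\in\mathcal{I}$ in the same way, so $\mathcal{G}_\mathcal{I}(X\oplus Y)$ is stable under left and right multiplication by $\mathcal{L}(X\oplus Y)$; its linear span and norm-closure inherit this stability, making $\Psi(\mathcal{I})$ a closed two-sided ideal. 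Monotonicity of $\Psi$ and of $\Phi$ is then immediate.

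For the key recovery statement, one direction is easy: if $T\in\mathcal{I}$ then $i_YT\pi_X\in\mathcal{G}_\mathcal{I}(X\oplus Y)\subseteq\Psi(\mathcal{I})$. For the reverse inclusion, suppose $i_YT\pi_X\in\Psi(\mathcal{I})$. Then there exist operators $V_n\in\mathrm{span}\,\mathcal{G}_\mathcal{I}(X\oplus Y)$ with $V_n\to i_YT\pi_X$ in norm, say
\[V_n=\sum_{k=1}^{N_n}\alpha_{n,k}B_{n,k}S_{n,k}A_{n,k},\qquad S_{n,k}\in\mathcal{I}.\]
Applying $\pi_Y$ on the left and $i_X$ on the right, and using $\pi_Y(i_YT\pi_X)i_X=T$, we obtain
\[T=\lim_{n\to\infty}\sum_{k=1}^{N_n}\alpha_{n,k}(\pi_YB_{n,k})S_{n,k}(A_{n,k}i_X)\]
in norm. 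Since $\pi_YB_{n,k}\in\mathcal{L}(Y)$ and $A_{n,k}i_X\in\mathcal{L}(X)$, the subideal property of $\mathcal{I}$ guarantees that each summand lies in $\mathcal{I}$, and as $\mathcal{I}$ is norm-closed we conclude $T\in\mathcal{I}$. Hence $\Phi(\Psi(\mathcal{I}))=\mathcal{I}$, which yields injectivity, and combined with monotonicity in both directions it shows that $\Psi$ is an order isomorphism onto its image.

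The main obstacle is the bookkeeping in the recovery step: one must take care that after applying the ``sandwich'' $\pi_Y(\cdot)i_X$ termwise inside the limit, the pieces $\pi_YB_{n,k}$ and $A_{n,k}i_X$ land in $\mathcal{L}(Y)$ and $\mathcal{L}(X)$ respectively, so that the subideal hypothesis on $\mathcal{I}$ can be invoked to keep each summand in $\mathcal{I}$ rather than merely in some larger factorization class on $X\oplus Y$.
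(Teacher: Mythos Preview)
Your proof is correct and follows essentially the same approach as the paper: both arguments show order-reflection by taking $T$, forming $i_YT\pi_X$ (the paper writes $QTP$), approximating it by finite sums $\sum B_{n,k}S_{n,k}A_{n,k}$, and then sandwiching with $\pi_Y(\cdot)i_X$ to recover $T$ as a norm limit of elements of the closed subideal. The only cosmetic difference is that you package the argument via an explicit left inverse $\Phi$ and verify that $\Psi(\mathcal{I})$ is an ideal, whereas the paper directly proves $\Psi(\mathcal{I})\subseteq\Psi(\mathcal{J})\Rightarrow\mathcal{I}\subseteq\mathcal{J}$ and leaves the ideal property implicit.
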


\begin{proof}Let $\mathcal{I}$ and $\mathcal{J}$ be closed subideals in $\mathcal{L}(X,Y)$.  Clearly, if $\mathcal{I}\subseteq\mathcal{J}$, then $\Psi(\mathcal{I})\subseteq\Psi(\mathcal{J})$.  Now let us suppose instead that $\Psi(\mathcal{I})\subseteq\Psi(\mathcal{J})$.  Pick any $T\in\mathcal{I}$.  Let $P:X\oplus Y\to X$ and $R:X\oplus Y\to Y$ denote the canonical projections onto $X$ and $Y$, respectively, and let $J:X\to X\oplus Y$ and $Q:Y\to X\oplus Y$ denote the canonical embeddings.  Then $QTP\in\Psi(\mathcal{I})\subseteq\Psi(\mathcal{J})$, and so we can find a sequence of finite sums satisfying
\[\lim_{n\to\infty}\sum_{j=1}^{m_n}B_{n,j}T_{n,j}A_{n,j}=QTP,\]
where $A_{n,j}\in\mathcal{L}(X\oplus Y,X)$, $B_{n,j}\in\mathcal{L}(Y,X\oplus Y)$, and $T_{n,j}\in\mathcal{J}$ for all $n$ and $j$.  Let us set
\[S_n:=\sum_{j=1}^{m_n}RB_{n,j}T_{n,j}A_{n,j}J\in\mathcal{J}.\]
Then $S_n\to RQTPJ=T$, and since $\mathcal{J}$ is closed we get $T\in\mathcal{J}$, showing that $\mathcal{I}\subseteq\mathcal{J}$.\end{proof}

We also need the following result on finding copies of $\ell_q$ or $c_0$ in a decomposed space.

\begin{proposition}\label{copy-of-Z}Let $1\leq p<q<\infty$.  Suppose $Y$ is a (real or complex) Banach space such that $\ell_p\oplus Y$ contains a copy of $\ell_q$ (resp. $c_0$).  Then $Y$ contains a copy of $\ell_q$ (resp. $c_0$).\end{proposition}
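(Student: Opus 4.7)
The plan is to use Pitt's theorem together with a standard small-perturbation / gliding-hump argument. Let $T : \ell_q \to \ell_p \oplus Y$ (or $T : c_0 \to \ell_p \oplus Y$, in the $c_0$ case) be a linear isomorphism onto its range, and let $P : \ell_p \oplus Y \to \ell_p$ and $Q : \ell_p \oplus Y \to Y$ denote the canonical coordinate projections, so that $T = PT \oplus QT$ in the obvious sense. Let $(e_n)$ be the canonical basis of $\ell_q$ (respectively $c_0$); it is seminormalized, weakly null, and $(Te_n)$ is a seminormalized basic sequence in $\ell_p \oplus Y$ equivalent to $(e_n)$.

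The first main step is to apply Pitt's theorem to $PT$. Because $1 \le p < q < \infty$ (respectively because $c_0$ does not embed into any $\ell_p$, $1\le p<\infty$), every bounded operator from $\ell_q$ to $\ell_p$ (respectively from $c_0$ to $\ell_p$) is compact. Hence $PT$ is compact, and since compact operators send weakly null sequences to norm null sequences, $\|PTe_n\|_{\ell_p} \to 0$. Consequently
\[
\|QTe_n - Te_n\|_{\ell_p \oplus Y} \leq \|PTe_n\|_{\ell_p} \to 0.
\]

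The second main step is a routine perturbation argument. Since $(Te_n)$ is basic and seminormalized in $\ell_p \oplus Y$, its biorthogonal functionals (restricted to the closed span) are uniformly bounded. Passing to a subsequence $(e_{n_k})$ we may arrange that $\sum_k \|PTe_{n_k}\|$ is as small as we please, so that by the classical small perturbation principle for basic sequences (see, e.g., \cite[Theorem 1.3.9]{AK06}) the sequence $(QTe_{n_k})$ is basic in $Y$ and equivalent to $(Te_{n_k})$, which in turn is equivalent to the canonical basis of $\ell_q$ (respectively $c_0$). Therefore $[QTe_{n_k}] \subseteq Y$ is isomorphic to $\ell_q$ (respectively $c_0$), giving the desired copy inside $Y$.

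No step is really a significant obstacle; the only thing to be a little careful about is ensuring Pitt's theorem in the form needed ($c_0 \to \ell_p$ compact as well as $\ell_q \to \ell_p$ compact) and handling both real and complex scalars uniformly, both of which are standard. The same proof goes through verbatim in the complex case since Pitt's theorem and the perturbation lemma hold for complex scalars.
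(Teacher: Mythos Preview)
Your proof is correct and takes a genuinely different route from the paper. The paper argues directly on a sequence $(x_n\oplus y_n)$ equivalent to the canonical basis, and shows by hand that $(x_n)$ has a subsequence converging to $0$: it invokes Rosenthal's $\ell_1$ theorem, splits into the cases where $(x_n)$ has an $\ell_1$-subsequence (forcing $p=1$) and where it has a weakly Cauchy subsequence, passes to differences, applies Bessaga--Pe\l czy\'nski to get a seminormalized block equivalent to the $\ell_p$ basis, and then derives a contradiction with the $\ell_q$ (resp.\ $c_0$) growth of partial sums; a second estimate is then needed to show the limit is actually $0$. Only then is the Principle of Small Perturbations applied. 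Your approach replaces all of this with a single appeal to Pitt's theorem (and its $c_0$ variant): compactness of $PT$ plus the fact that $(e_n)$ is weakly null immediately gives $\|PTe_n\|\to 0$, after which the same perturbation principle finishes the job. Your argument is shorter and more conceptual; the paper's argument is more self-contained in that it avoids citing Pitt's theorem, but at the cost of a longer case analysis.
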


\begin{proof}Let $(x_n\oplus y_n)_{n=1}^\infty$ be a seminormalized basic sequence which is $K$-equivalent, for some $1\leq K<\infty$, to the canonical basis of $\ell_q$ (resp. $c_0$), where $x_n\in\ell_p$ and $y_n\in Y$ for all $n\in\mathbb{Z}^+$.  We claim that $(x_n)_{n=1}^\infty$ has a convergent subsequence.  Otherwise, by Rosenthal's $\ell_1$ Theorem, we consider separately the case where $(x_n)_{n=1}^\infty$ contains a subsequence equivalent to the canonical basis of $\ell_1$, which can only be true if $p=1$ since $\ell_p$ contains no copy of $\ell_1$ for $1<p<\infty$.  Pass to this subsequence, and set $x'_n=x_{2n+1}-x_{2n}$ so that $(x'_n)_{n=1}^\infty$ is a seminormalized basic sequence and equivalent to the canonical basis of $\ell_p=\ell_1$ in this case.  Next, consider the case where $(x_n)_{n=1}^\infty$ fails to contain a subsequence equivalent to the canonical basis of $\ell_1$.  Then we can pass to a subsequence if necessary and again define $x'_n=x_{2n+1}-x_{2n}$ so that $(x'_n)_{n=1}^\infty$ is seminormalized and weakly null.  By the Bessaga-Pe\l czy\'{n}ski Selection Principle together with \cite[Lemma 2.1.1 and Remark 2.1.2]{AK06}, we can pass to a further subsequence if necessary so that $(x'_n)_{n=1}^\infty$ is again equivalent to the canonical basis of $\ell_p$.  Thus in either case, we have passed to a subsequence so that $(x'_n)_{n=1}^\infty$ is $C$-equivalent to the canonical $\ell_p$ basis for some $1\leq C<\infty$.  Now set $y'_n:=y_{2n+1}-y_{2n}$.  Then $(x'_n\oplus y'_n)_{n=1}^\infty$ is $C'$-equivalent to the canonical $\ell_p$ basis for some $1\leq C'<\infty$.  Without loss of generality we may assume that $\ell_p\oplus Y$ is endowed with the $\ell_1$ norm, i.e. $\norm{x\oplus y}=\norm{x}+\norm{y}$ whenever $x\in\ell_p$ and $y\in Y$.  In the $\ell_q$ case we now have
\[C'N^{1/q}\geq\norm{\sum_{n=1}^N x'_n\oplus y'_n}\geq\norm{\sum_{n=1}^Nx'_n}\geq C^{-1}N^{1/p}\]
for all $N\in\mathbb{Z}^+$, which is impossible.  Similarly, in the $c_0$ case we have
\[C'\geq\norm{\sum_{n=1}^N x'_n\oplus y'_n}\geq\norm{\sum_{n=1}^Nx'_n}\geq C^{-1}N^{1/p}\]
for all $N\in\mathbb{Z}^+$, which is again impossible.  Thus, $(x_n)_{n=1}^\infty$ contains a convergent subsequence as claimed.  Pass to it, and let $x\in\ell_p$ be such that $\norm{x_n-x}\leq 2^{-n}$ for all $n\in\mathbb{Z}^+$.  Then
\[\norm{\sum_{n=1}^Nx_n\oplus y_n}\geq\norm{\sum_{n=1}^Nx_n}\geq\norm{\sum_{n=1}^Nx}-\sum_{n=1}^N\norm{x_n-x}\geq\norm{\sum_{n=1}^Nx}-1=N\norm{x}-1.\]
In the $\ell_q$ case this means
\[KN^{1/q}\geq\norm{\sum_{n=1}^Nx_n\oplus y_n}\geq N\norm{x}-1\]
for all $N\in\mathbb{Z}^+$, and in the $c_0$ case we have
\[K\geq\norm{\sum_{n=1}^Nx_n\oplus y_n}\geq N\norm{x}-1\]
for all $N\in\mathbb{Z}^+$.  Either way, we must have $x=0$ or face a contradiction as $N\to\infty$.  Thus, by the Principle of Small Perturbations, we can pass to a subsequence if necessary so that $(x_n\oplus y_n)_{n=1}^\infty$ is equivalent to $(0\oplus y_n)_{n=1}^\infty$.  It follows that $(y_n)_{n=1}^\infty$ is equivalent to the canonical basis of $\ell_q$ (resp. $c_0$), and hence that $Y$ contains a copy of $\ell_q$ (resp. $c_0$).\end{proof}

Now we can proceed with the proof of the main Theorem.

\begin{proof}[Proof of Theorem \ref{main1}]Let $X$ be a real Banach space containing a complemented copy of $\ell_p$, and a copy of either $\ell_q$ or $c_0$.  Let us decompose $X=\ell_p\oplus Y$ for some subspace $Y$ of $X$.  Notice that by Proposition \ref{copy-of-Z}, $Y$ contains a copy of either $\ell_q$ or $c_0$.

Next, let $\Psi$ be as defined in Proposition \ref{order-isomorphism}.  Since $\mathcal{FSS}$ and $\mathcal{SSCS}$ are closed operator ideals, we have
\[\Psi(\mathcal{FSS}(\ell_p,Y))\subseteq\mathcal{FSS}(\ell_p\oplus Y)=\mathcal{FSS}(X)\]
and
\[\Psi(\mathcal{SSCS}(\ell_p,Y))\subseteq\mathcal{SSCS}(\ell_p\oplus Y)=\mathcal{SSCS}(X).\]
Similarly, since $(\ell_p)_\mathbb{C}\oplus Y_\mathbb{C}=(\ell_p\oplus Y)_\mathbb{C}$, we have
\[\Psi(\mathcal{FSS}((\ell_p)_\mathbb{C},Y_\mathbb{C}))\subseteq\mathcal{FSS}(X_\mathbb{C})\;\;\;\text{ and }\;\;\;\Psi(\mathcal{SSCS}((\ell_p)_\mathbb{C},Y_\mathbb{C}))\subseteq\mathcal{SSCS}(X_\mathbb{C}).\]
Due to $p\in(1,2)$, the space $\ell_p$ is reflexive.  Applying Theorem \ref{main1-utility} and Proposition \ref{order-isomorphism} therefore completes the proof.\end{proof}

\section{Incomparable ideals in $\mathcal{L}(X)$ and $\mathcal{L}(X^*)$}

In this section we prove Theorem \ref{main2}.  Note once more that we will very closely follow the proof of \cite[Theorem 5.4]{SSTT07}, except making certain modifications where necessary.  We will need the following preliminary, which was given in \cite{SSTT07} as a Corollary to \cite[Theorem 9.13]{DJT95}.  As a matter of notation, if $A$ is an $n\times n$ matrix, then we let $\norm{A}_{p,q}$, $1\leq p,q\leq\infty$, denote the operator norm when $A$ is viewed as an operator $A:\ell_p^n\to\ell_q^n$.

\begin{proposition}[{\cite[Corollary 5.2]{SSTT07}}]\label{5.2}Let $m\in\mathbb{Z}^+$, and let $1\leq p<r<q\leq\infty$.  Suppose $U$ is an invertible $m\times m$ matrix satisfying $\norm{U}_{p,q}\leq 1$ and $\norm{U}_{r',r'}\leq\delta$.  Then for any factorization $U=AB$ we must have $\norm{A}_{r,q}\norm{B}_{p,r}\geq\delta^{-1}$.  For $i=1,\cdots,m$, let $e_i\in\mathbb{K}^m$ denote the $i$th coordinate vector.  If $\widetilde{U}$ is another $m\times m$ matrix satisfying
\[\norm{U-\widetilde{U}}_{p,q}\leq\left(2\max_{1\leq i\leq m}\norm{U^{-1}e_i}_p\right)^{-1}\]
then for any factorization $\widetilde{U}=\widetilde{A}\widetilde{B}$ we must have
\[\norm{\widetilde{A}}_{r,q}\norm{\widetilde{B}}_{p,r}\geq(2\delta)^{-1}.\]\end{proposition}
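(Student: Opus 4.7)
The first inequality is essentially the content of Theorem~9.13 of DJT95 applied to the specific operator $U$. My plan is: given any factorization $U=AB$ through some $\ell_r^k$, use the invertibility of $U$ to rewrite the identity matrix as $I_m=U^{-1}AB$, and take the trace to obtain
\[
m=\operatorname{tr}(BU^{-1}A),
\]
where $BU^{-1}A$ is viewed as an operator on $\ell_r^k$. The DJT trace/factorization duality then packages this trace identity, together with the bound $\norm{U}_{r',r'}\leq\delta$, into the lower estimate $\norm{A}_{r,q}\norm{B}_{p,r}\geq\delta^{-1}$. Equivalently, the $r$-factorization norm $\gamma_r(U):=\inf_{U=AB}\norm{A}_{r,q}\norm{B}_{p,r}$ is at least $\norm{U}_{r',r'}^{-1}$, so the hypothesis on $\delta$ gives the desired conclusion.

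For the perturbation part, the strategy is to convert any factorization of $\widetilde U$ into a factorization of $U$ at the cost of at most a factor of $2$. I would set $V:=\widetilde U U^{-1}=I+(\widetilde U-U)U^{-1}$. Provided $V$ is invertible with $\norm{V^{-1}}_{q,q}\leq 2$, one has $U=V^{-1}\widetilde U=(V^{-1}\widetilde A)\widetilde B$, which is a factorization of $U$ through $\ell_r^k$. Applying the first part of the statement to this factorization of $U$ then yields
\[
\delta^{-1}\leq\norm{V^{-1}\widetilde A}_{r,q}\norm{\widetilde B}_{p,r}\leq\norm{V^{-1}}_{q,q}\norm{\widetilde A}_{r,q}\norm{\widetilde B}_{p,r}\leq 2\norm{\widetilde A}_{r,q}\norm{\widetilde B}_{p,r},
\]
which rearranges to the required $\norm{\widetilde A}_{r,q}\norm{\widetilde B}_{p,r}\geq(2\delta)^{-1}$.

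To establish the Neumann-series bound $\norm{V^{-1}}_{q,q}\leq 2$, I would expand $V-I=(\widetilde U-U)U^{-1}$ and use the column decomposition $U^{-1}y=\sum_jy_jU^{-1}e_j$ to estimate $\norm{U^{-1}y}_p\leq\max_j\norm{U^{-1}e_j}_p\cdot\sum_j|y_j|$; combining with $\norm{(\widetilde U-U)(U^{-1}y)}_q\leq\norm{\widetilde U-U}_{p,q}\norm{U^{-1}y}_p$ and the hypothesis $\norm{U-\widetilde U}_{p,q}\leq(2\max_j\norm{U^{-1}e_j}_p)^{-1}$ then gives $\norm{V-I}\leq 1/2$ in the appropriate operator norm, whence $\norm{V^{-1}}\leq 2$ by the standard Neumann expansion.

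\textbf{Main obstacle.} The deepest ingredient is Theorem~9.13 of DJT95, which encapsulates a nontrivial piece of operator-ideal theory (trace duality between the $r$-factorization norm $\gamma_r$ and the $r'$-summing ideal, via Pietsch factorization); reproducing it from scratch would be a substantial detour. The perturbation step is more mechanical in spirit, but the real care lies in matching the operator norms in the Neumann estimate so that the column-wise bound on $U^{-1}$ and the $(p,q)$-norm hypothesis on $\widetilde U-U$ combine into a genuine contraction of $V-I$, without introducing a dimension-dependent constant that would break the intended factor-of-two loss.
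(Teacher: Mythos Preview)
The paper does not prove this proposition; it is quoted from \cite[Corollary~5.2]{SSTT07} as a corollary of \cite[Theorem~9.13]{DJT95}, so there is no in-paper argument to compare against. Your plan for the first assertion---invoke the trace duality behind DJT~9.13---is indeed how the cited source proceeds. (A side remark: the printed hypothesis $\|U\|_{r',r'}\le\delta$ appears to be a misprint for $\|U^{-1}\|_{r',r'}\le\delta$; the later application takes $\delta=\|(U_n^{(p)})^{-1}\|_{2,2}$, and your restatement $\gamma_r(U)\ge\|U\|_{r',r'}^{-1}$ already fails for $U=\epsilon I$.)

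The perturbation step, however, has a genuine gap, and it is exactly the obstacle you flag at the end. Your column estimate gives
\[
\|(V-I)y\|_q\le\|\widetilde U-U\|_{p,q}\cdot\max_j\|U^{-1}e_j\|_p\cdot\|y\|_1\le\tfrac12\|y\|_1,
\]
which bounds $\|V-I\|_{1,q}$, not $\|V-I\|_{q,q}$. Converting $\|y\|_1$ to $\|y\|_q$ costs a factor $m^{1/q'}$, so the Neumann series does not close, and no alternative choice of norm rescues it: the factorization $U=(V^{-1}\widetilde A)\widetilde B$ really requires $\|V^{-1}\|_{q,q}\le 2$. The worry you raise is not a technicality but a fatal defect of this route.

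The argument in \cite{SSTT07} bypasses inversion of $V$ and stays with traces. The perturbation hypothesis is tailored so that each diagonal entry of $(\widetilde U-U)U^{-1}$ satisfies
\[
|e_i^*(\widetilde U-U)U^{-1}e_i|\le\|(\widetilde U-U)U^{-1}e_i\|_q\le\|\widetilde U-U\|_{p,q}\,\|U^{-1}e_i\|_p\le\tfrac12,
\]
whence $|\operatorname{tr}(U^{-1}\widetilde U)|\ge m-\tfrac{m}{2}=\tfrac{m}{2}$. One then feeds $\widetilde U$ into the same trace-duality inequality from DJT~9.13 that drives the first part, with $U^{-1}$ kept as the fixed test operator on the dual side; since the bound on $U^{-1}$ is unchanged, the conclusion $\gamma_r(\widetilde U)\ge(2\delta)^{-1}$ follows directly, with no Neumann series and no need for $\widetilde U$ to be invertible.
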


\noindent This is enough to prove the next result.

\begin{theorem}\label{Hadamard}Let $1<p<2$, and let $p'$ be its conjugate, i.e. $\frac{1}{p}+\frac{1}{p'}=1$.  Suppose $X$ is a (real or complex) Banach space containing a complemented copy of $\ell_p$, and let $P:X\to\ell_p$ denote a projection onto $\ell_p$.
\begin{itemize}\item[(i)]  Suppose $Y$ is a (real or complex) Banach space containing a copy of $c_0$, and let $J:c_0\to Y$ be any bounded linear embedding.  Then there exists an operator $U\in(\mathcal{FSS}\cap\mathcal{SSCS})(\ell_p,c_0)$ such that
\[JUP\notin[\mathcal{G}_{\ell_2}](X,Y).\]
Furthermore, if $X$ is reflexive then there exists an operator
\[V\in(\mathcal{FSS}\cap\mathcal{SSCS})(Y^*,X^*)\setminus[\mathcal{G}_{\ell_2}](Y^*,X^*).\]
\item[(ii)]  Suppose $\widehat{Y}$ is a (real or complex) Banach space containing a copy of $\ell_q$, $q\in[p',\infty)$, and let $\widehat{J}:\ell_q\to\widehat{Y}$ be any bounded linear embedding.  Then there exists an operator $\widehat{U}\in(\mathcal{FSS}\cap\mathcal{SSCS})(\ell_p,\ell_q)$ such that
\[\widehat{J}\widehat{U}P\notin[\mathcal{G}_{\ell_2}](X,\widehat{Y}).\]
Furthermore, if $X$ is reflexive then there exists an operator
\[\widehat{V}\in(\mathcal{FSS}\cap\mathcal{SSCS})(\widehat{Y}^*,X^*)\setminus[\mathcal{G}_{\ell_2}](\widehat{Y}^*,X^*).\]\end{itemize}\end{theorem}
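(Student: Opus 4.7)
The plan is to construct, separately for $(i)$ and $(ii)$, a block-diagonal operator $U = \bigoplus_m U_m:\ell_p\to c_0$ (respectively $\widehat{U}:\ell_p\to\ell_q$), where each $U_m$ is a rescaled $m\times m$ Hadamard matrix $H_m/m^{1/p'}$ placed on pairwise disjoint blocks of the canonical bases. Since $\|H_m\|_{p,\infty} = m^{1/p'}$ and $\|H_m\|_{p,q}\leq m^{1/p'}$ for $q\geq p'$ (by Riesz--Thorin from $\|H_m\|_{1,\infty}=1$ and $\|H_m\|_{2,2}=\sqrt{m}$, followed by the contractive inclusion $\ell_{p'}^m\hookrightarrow\ell_q^m$), the scaling gives $\|U_m\|_{p,q}\leq 1$ uniformly in $m$, and the direct sum is a bounded operator whose range lies in $c_0$ (resp.\ $\ell_q$).

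Membership of $U$ in $\mathcal{FSS}$ should follow from a pigeonhole argument on Hadamard blocks: for an $n$-dimensional subspace of $\ell_p$ with $n$ large, some unit vector must be essentially supported in tail blocks, where the Hadamard rescaling sends it to an $\epsilon$-small vector in $c_0$. Membership in $\mathcal{SSCS}$ then follows from the full duality between $\mathcal{FSS}$ and $\mathcal{SSCS}$ recalled in the introduction. The heart of the proof is showing $JUP\notin[\mathcal{G}_{\ell_2}](X,Y)$. Assume otherwise and approximate $JUP$ within $\epsilon$ by a finite sum $\sum_i A^{(i)}B^{(i)}$ with $A^{(i)}:\ell_2\to Y$ and $B^{(i)}:X\to\ell_2$. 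Composing with the inclusion $\iota_m:\ell_p^m\hookrightarrow X$ arising from $P$ on the domain, and with Hahn--Banach extensions of the biorthogonal functionals of the canonical basis of $c_0$ (transported via $J$) on the codomain, extract from the sum a factorization through $\ell_2^{k_m}$ (with $k_m\leq m$) of a matrix $\widetilde{U}_m$ close to $U_m$ in the $(p,q)$-operator norm. The perturbation form of Proposition~\ref{5.2}, applied with a suitable choice of $r$ strictly between $p$ and $q$, then yields $\|\widetilde{A}_m\|_{r,q}\|\widetilde{B}_m\|_{p,r}\geq(2\delta_m)^{-1}$ where $\delta_m=\|U_m\|_{r',r'}$. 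Converting the intermediate norm from $\ell_r^{k_m}$ to $\ell_2^{k_m}$ (at the cost of a factor at most $k_m^{|1/r-1/2|}\leq m^{|1/r-1/2|}$) produces a divergent lower bound on $\|\widetilde{A}_m\|_{2,q}\|\widetilde{B}_m\|_{p,2}$, contradicting the uniform boundedness of the approximating finite sum.

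The dual statements about $V$ and $\widehat{V}$ follow from Proposition~\ref{predual}: since $X$ (and hence $X^*$) is reflexive, every operator $Y^*\to X^*$ is the dual of one in $\mathcal{L}(X,Y)$. Take $V:=U^*$; then the $\mathcal{FSS}$--$\mathcal{SSCS}$ duality places $V$ in $(\mathcal{FSS}\cap\mathcal{SSCS})(Y^*,X^*)$, and $V\in[\mathcal{G}_{\ell_2}](Y^*,X^*)$ would force $U\in[\mathcal{G}_{\ell_2}](\ell_p,c_0)$ by taking adjoints (using reflexivity), contradicting the previous step. The principal obstacle is the careful choice of $r$ in Proposition~\ref{5.2} so that the decay of $\delta_m$ dominates the $m^{|1/r-1/2|}$ cost of the norm conversion; in particular, the boundary case $q=p'$ in part $(ii)$, where the natural range for $r$ degenerates, may require a modified block construction (e.g.\ tensoring the Hadamard blocks with small identity factors, or using non-square blocks) to carry the argument through.
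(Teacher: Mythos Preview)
Your construction of $U$ via rescaled Hadamard blocks is essentially the same as the paper's, but your application of Proposition~\ref{5.2} is misdirected in a way that manufactures your ``principal obstacle.'' Since $1<p<2<p'\leq q$, the value $r=2$ lies strictly between $p$ and $q$ in every case, including the supposed boundary case $q=p'$. The factorization you extract is already through $\ell_2^{k_m}$, so with $r=2$ there is no conversion cost at all: Proposition~\ref{5.2} applies directly with $\delta_m=\|(U_m)^{-1}\|_{2,2}=m^{1/p'-1/2}\to 0$, and the contradiction is immediate. No ``modified block construction'' is needed. The paper does exactly this, and in fact reduces first to the target space $\ell_\infty$ (using its injectivity) so that the finite-block extraction is completely clean; your Hahn--Banach route on the codomain is a workable substitute but less transparent.

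Two further points need repair. First, your claim that $U\in\mathcal{SSCS}$ ``follows from the full duality'' is not correct as stated: the duality says $U\in\mathcal{SSCS}$ iff $U^*\in\mathcal{FSS}$, which is a separate fact you must establish. The paper handles this by observing that $U^*$ factors through $I_{1,p}$ (hence is $\mathcal{FSS}$), and for $\widehat{U}$ it invokes the theorem that on a $B$-convex domain, $\mathcal{FSS}$ passes to the adjoint. Second, your $\mathcal{FSS}$ argument (``pigeonhole on tail blocks'') is too vague to be a proof; the paper instead notes that $U$ factors through the formal identity $I_{p',0}$, which is known to be $\mathcal{FSS}$, and cites \cite{SSTT07} for $\widehat{U}$. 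Finally, for the dual operator you should write $V=(JUP)^*$ rather than $V=U^*$ (the domains do not match otherwise), and verify $V\notin[\mathcal{G}_{\ell_2}]$ by noting that $V^*$ equals the composition of $JUP$ with the canonical embedding $Y\hookrightarrow Y^{**}$, to which the first part applies.
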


\begin{proof}Let us inductively define a sequence $(H_n)_{n=1}^\infty$ of $2^{n-1}\times 2^{n-1}$ matrices.  Set $H_1:=[1]$, and if $H_n$ has been defined for $n\in\mathbb{Z}^+$, set
\[H_{n+1}:=\begin{bmatrix}H_n&H_n\\H_n&-H_n\end{bmatrix}.\]
Note that, in the literature, each $H_n$, $n\in\mathbb{Z}^+$ is called the $n$th {\it Hadamard matrix}, and for any $1\leq r\leq s\leq\infty$ it can be viewed as an operator $H_n\in\mathcal{L}(\ell_r^{2^{n-1}},\ell_s^{2^{n-1}})$.

It was observed in \cite[Remark 5.3]{SSTT07} (and is routine to verify) that each
\[H_n^2=2^{n-1}I_{2^{n-1}},\]
where $I_{2^{n-1}}$ denotes the $2^{n-1}\times 2^{n-1}$ identity matrix, and that
\[\norm{H_n}_{2,2}=2^{(n-1)/2}\;\;\;\text{ and }\;\;\;\norm{H_n}_{1,\infty}=1.\]
Thus, if $r\in(1,2)$ then we can apply the Riesz-Thorin Interpolation Theorem with $\phi=2/r'\in[0,1]$, to obtain
\[\norm{H_n}_{r,r'}\leq\norm{H_n}_{2,2}^\phi\norm{H_n}_{1,\infty}^{1-\phi}=2^{(n-1)/r'}.\]
We can therefore define, for any $r\in[1,2]$,
\[U_n^{(r)}:=2^{-(n-1)/r'}H_n\]
so that $\norm{U_n^{(p)}}_{p,p'}\leq 1$, and hence also $\norm{U_n^{(p)}}_{p,\infty}\leq 1$.  Due to these facts, we can define the norm-1 linear operator
\[U^{(p)}=\bigoplus_{n=1}^\infty U_n^{(p)}:\left(\bigoplus_{n=1}^\infty\ell_p^{2^{n-1}}\right)_{\ell_p}\to\left(\bigoplus_{n=1}^\infty\ell_{p'}^{2^{n-1}}\right)_{\ell_{p'}}.\]
Next, define
\[\widehat{U}:=I_{p',q}U^{(p)}\;\;\;\text{ and }\;\;\;U:=I_{p',0}U^{(p)}.\]
Since $I_{p',0}$ is $\mathcal{FSS}$, it follows that $U$ is as well.  That $\widehat{U}$ is $\mathcal{FSS}$ has already been shown in \cite[Theorem 6.8]{SSTT07}.

Since $\ell_\infty$ is injective, it will help to first consider that space.  Suppose towards a contradiction that $I_{q,\infty}\widehat{U}\in[\mathcal{G}_{\ell_2}](\ell_p,\ell_\infty)$.  Recall that if $X$, $Y$, and $Z$ are Banach spaces with $Z\cong Z\oplus Z$, then $\mathcal{G}_Z(X,Y)$ is always a linear space (cf., e.g., \cite[eq.(2),p313]{Sc12}).  In particular, $\mathcal{G}_{\ell_2}(\ell_p,\ell_\infty)$ is a linear space, and so there is $\widetilde{U}\in\mathcal{G}_{\ell_2}(\ell_p,\ell_\infty)$ with $\norm{I_{q,\infty}\widehat{U}-\widetilde{U}}_{p,\infty}<\frac{1}{2}$.  Write $\widetilde{U}=AB$ for $A\in\mathcal{L}(\ell_2,\ell_\infty)$ and $B\in\mathcal{L}(\ell_p,\ell_2)$, and set $C:=\norm{A}\norm{B}$.  Due to $2<p'<\infty$, we can pick $n\in\mathbb{Z}^+$ such that
\[C<\left(2\cdot 2^{(n-1)(\frac{1}{p'}-\frac{1}{2})}\right)^{-1}\]
Denote by $J_n:\ell_p^{2^{n-1}}\to(\bigoplus_{n=1}^\infty\ell_p^{2^{n-1}})_{\ell_p}$ and $R_n:(\bigoplus_{n=1}^\infty\ell_\infty^{2^{n-1}})_{\ell_\infty}\to\ell_\infty^{2^{n-1}}$ the canonical embedding and projection, which are both norm-1.  Now let $E$ be any $2^{n-1}$-dimensional subspace of $\ell_2$ containing $BJ_n\ell_p^{2^{n-1}}=\text{Im}(BJ_n)$.  Recall that a closed subspace of a Hilbert space is again a Hilbert space, and so by Parseval's identity we can now see that $E$ is isometrically isomorphic to $\ell_2^{2^{n-1}}$.  Hence, $R_n\widetilde{U}J_n=(R_nA|_E)(BJ_n)$ factors through $\ell_2^{2^{n-1}}$, and
\begin{equation}\label{factor-estimate}\norm{R_nA|_E}\norm{BJ_n}\leq\norm{A}\norm{B}=C<\left(2\cdot 2^{(n-1)(\frac{1}{p'}-\frac{1}{2})}\right)^{-1}.\end{equation}
Due to $H_n^2=2^{n-1}I_{2^{n-1}}$, we have $(U_n^{(p)})^{-1}=2^{-(n-1)}2^{(n-1)/p'}H_n$.  Recall also that $\norm{H_n}_{1,\infty}=1$, so that for each $i$th coordinate vector $e_i\in\mathbb{K}^{2^{n-1}}$ we have
\begin{multline*}\norm{(U_n^{(p)})^{-1}e_i}_p=2^{-(n-1)}2^{(n-1)/p'}\norm{H_ne_i}_p\leq 2^{-(n-1)}2^{(n-1)/p'}2^{(n-1)/p}\norm{H_ne_i}_\infty\\=\norm{H_ne_i}_\infty\leq \norm{H_n}_{1,\infty}\norm{e_i}_1=1,\end{multline*}
and hence
\begin{multline*}\norm{U_n^{(p)}-R_n\widetilde{U}J_n}_{p,\infty}=\norm{R_n(I_{q,\infty}\widehat{U}-\widetilde{U})J_n}_{p,\infty}\\\leq\norm{I_{q,\infty}\widehat{U}-\widetilde{U}}_{p,\infty}<\frac{1}{2}\leq\left(2\max_{1\leq i\leq 2^{n-1}}\norm{(U_n^{(p)})^{-1}e_i}_p\right)^{-1}.\end{multline*}
This gives us
\begin{multline*}\delta:=\norm{(U_n^{(p)})^{-1}}_{2,2}=2^{-(n-1)}2^{(n-1)/p'}\norm{H_n}_{2,2}\\=2^{-(n-1)}2^{(n-1)/p'}2^{(n-1)/2}=2^{(n-1)(\frac{1}{p'}-\frac{1}{2})}.\end{multline*}
Thus we have $\norm{U_n^{(p)}}_{p,\infty}\leq 1$, $\norm{(U_n^{(p)})^{-1}}_{2,2}=\delta$, $R_n\widetilde{U}J_n=(R_nA|_E)(BJ_n)$, and
\[\norm{U_n^{(p)}-R_n\widetilde{U}J_n}_{p,\infty}\leq\left(2\max_{1\leq i\leq 2^{n-1}}\norm{(U_n^{(p)})^{-1}e_i}_p\right)^{-1},\]
so that we can apply Proposition \ref{5.2} to obtain
\[\norm{R_nA|_E}_{2,\infty}\norm{BJ_n}_{p,2}\geq(2\delta)^{-1}=\left(2\cdot 2^{(n-1)(\frac{1}{p'}-\frac{1}{2})}\right)^{-1}.\]
However, this contradicts \eqref{factor-estimate}.  This proves that $I_{q,\infty}\widehat{U}\notin[\mathcal{G}_{\ell_2}](\ell_p,\ell_\infty)$.

Next, suppose towards a contradiction that $\widehat{J}\widehat{U}P\in[\mathcal{G}_{\ell_2}](X,\widehat{Y})$.  Then we can find operators $(\widehat{A}_n)_{n=1}^\infty\subseteq\mathcal{G}_{\ell_2}(X,Y)$ with $\widehat{A}_n\to\widehat{J}\widehat{U}P$ in norm.  Set $\widetilde{Y}:=\widehat{J}\ell_q\subseteq\widehat{Y}$, and let $\widetilde{J}:\ell_q\to\widetilde{Y}$ be the isomorphism induced by $\widehat{J}$, i.e. $\widetilde{J}x=\widehat{J}x$ for all $x\in\ell_q$.  Now let $Q:\ell_p\to X$ be an embedding satisfying $PQ=I_{p,p}$.  Via injectivity of $\ell_\infty$ we may extend $R:=I_{q,\infty}\widetilde{J}^{-1}$ to $\widetilde{R}:\widehat{Y}\to\ell_\infty$.  Then
\[\widetilde{R}\widehat{A}_nQ\to\widetilde{R}\widehat{J}\widehat{U}PQ=I_{q,\infty}\widetilde{J}^{-1}\widetilde{J}\widehat{U}PQ=I_{q,\infty}\widehat{U},\]
contradicting the fact that $I_{q,\infty}\widehat{U}\notin[\mathcal{G}_{\ell_2}](\ell_p,\ell_\infty)$.  It follows that $\widehat{J}\widehat{U}P\notin[\mathcal{G}_{\ell_2}](X,\widehat{Y})$.

Notice that $I_{0,\infty}U=I_{q,\infty}\widehat{U}$ so that $I_{0,\infty}U\notin[\mathcal{G}_{\ell_2}](\ell_p,\ell_\infty)$.  This way, we can run a similar argument as above, by supposing towards a contradiction that $JUP\in[\mathcal{G}_{\ell_2}](X,Y)$.  Then we can find operators $A_n\in\mathcal{G}_{\ell_2}(X,Y)$ such that $A_n\to JUP$.  Let $S:Jc_0\to c_0$ be an isomorphism satisfying $SJ=I_{0,0}$.  Via injectivity we may extend $M:=I_{0,\infty}S$ to $\widetilde{M}:Y\to\ell_\infty$.  Then
\[\widetilde{M}A_nQ\to\widetilde{M}JUPQ=I_{0,\infty}SJUPQ=I_{0,\infty}U,\]
contradicting the fact that $I_{0,\infty}U\notin[\mathcal{G}_{\ell_2}](\ell_p,\ell_\infty)$.  It follows that $JUP\notin[\mathcal{G}_{\ell_2}](X,Y)$.

Now let's suppose $X$ is reflexive.  Denote by $K:Y\to Y^{**}$ and $\widehat{K}:\widehat{Y}\to\widehat{Y}^{**}$ the canonical embeddings.  By Proposition \ref{predual} there exist operators $V^{(p)}\in\mathcal{L}(\ell_p,\ell_{p'})$ and $P_*\in(\ell_{p'},X^*)$ such that $V^{(p)*}=U^{(p)}$ and $(P_*)^*=P$.  Let us define
\[V:=P_*V^{(p)}I_{1,p}J^*\;\;\;\text{ and }\;\;\;\widehat{V}:=P_*V^{(p)}I_{q',p}\widehat{J}^*.\]
Notice that we have $J^{**}|_{c_0}=KJ$, and hence
\[V^*=J^{**}I_{p',\infty}U^{(p)}P=J^{**}|_{c_0}I_{p',0}U^{(p)}P=KJUP.\]
Similarly,
\[\widehat{V}^*=\widehat{J}^{**}I_{p',q}U^{(p)}P=\widehat{J}^{**}\widehat{U}P=\widehat{K}\widehat{J}\widehat{U}P.\]

It was shown in \cite[Theorem 4]{Pl04} that an operator $T$ is class $\mathcal{FSS}$ (resp. $\mathcal{SSCS}$) if and only if $T^*$ is $\mathcal{SSCS}$ (resp. $\mathcal{FSS}$).  In particular, this means $V$ and $\widehat{V}$ are both $\mathcal{SSCS}$.  Also, since $U^*$ and $V$ both factor through $I_{1,p}$, they are each class $\mathcal{FSS}$, and furthermore $U$ is class $\mathcal{SSCS}$.  Now, $\widehat{V}$ factors through a predual $\widehat{U}_*$ of $\widehat{U}$, with $\widehat{U}_*:\ell_{q'}\to\ell_{p'}$ and hence $\widehat{V}$ both $\mathcal{SSCS}$.  Recall that an operator is said to be {\it B-convex} just in case it fails to contain uniformly isomorphic copies of $\ell_1^n$, $n\in\mathbb{Z}^+$.  This is equivalent to having type $r$ for some $r>1$ (cf., e.g., \cite[Remark 2.7]{Pis82}).  Of course, $\ell_p$ has type $p>1$ (cf., e.g., \cite[Theorem 6.2.14]{AK06}) and hence is $B$-convex.  It was also shown in \cite[Theorem 3]{Pl04} that if $X$ is a $B$-convex Banach space, $Y$ is an arbitrary Banach space, and $T:X\to Y$ is $\mathcal{FSS}$, then $T^*$ is $\mathcal{FSS}$.  In particular, $\widehat{U}^*=\widehat{U}_*$ is $\mathcal{FSS}$, and it follows therefore that $\widehat{U}$ is $\mathcal{SSCS}$ and $\widehat{V}$ is $\mathcal{FSS}$.

It remains to show that neither  $V$ nor $\widehat{V}$ are class $[\mathcal{G}_{\ell_2}]$.  First, notice that we have already proved the first part of (i), so that, since $KJ: c_0\to Y^{**}$ is a bounded linear embedding with $Y^{**}$ containing a copy of $c_0$, therefore $KJUP\notin[\mathcal{G}_{\ell_2}](X,Y^{**})$.  Let us assume towards a contradiction that $V\in[\mathcal{G}_{\ell_2}](Y^*,X^*)$.  Then for each $\gamma>0$, there exists $\widetilde{V}\in\mathcal{G}_{\ell_2}(Y^*,X^*)$ such that $\norm{V-\widetilde{V}}<\gamma$.  This gives us $(\widetilde{V})^*\in\mathcal{G}_{\ell_2}(X,Y^{**})$, with
\[\norm{V^*-(\widetilde{V})^*}=\norm{(V-\widetilde{V})^*}=\norm{V-\widetilde{V}}<\gamma.\]
It follows that $KJUP=V^*\in[\mathcal{G}_{\ell_2}](X,Y^{**})$, which contradicts our conclusion from above.  A nearly identical argument shows that $\widehat{V}\notin[\mathcal{G}_{\ell_2}](\widehat{Y}^*,X^*)$.\end{proof}

\begin{corollary}\label{general-diagram}Let $1<p<2<p'\leq q\leq\infty$.  Suppose $X$ is a (real or complex) Banach space containing a complemented copy of $\ell_p$, and that $Y$ is a (real or complex) Banach space containing a copy of either $\ell_q$ if $q\neq\infty$ or $c_0$ if $q=\infty$ (not necessarily complemented).  Then the closed subideals of $\mathcal{L}(X,Y)$ are not linearly ordered.  In particular, we have the following diagram.

\noindent
{\footnotesize
\xymatrix@R=0pt@C=23pt{    
 & & & \boxed{\mathcal{FSS}} \ar@{-->}[dr] & \\
\boxed{\{0\}}\ar@{-->}[r] & \boxed{\mathcal{K}} \ar@{-->}[r] &
  \boxed{\mathcal{FSS}\cap[\mathcal{G}_{\ell_2}]} \ar@{-->}[ur] \ar@{-->}[dr] & \bneq &
   \boxed{\mathcal{L}}\\
 & & & \boxed{[\mathcal{G}_{\ell_2}]} \ar@{-->}[ur] & 
}}

\noindent Here, the dashed arrows ($--\!\!\!>$) all represent proper inclusions, and ``$\neq$'' represents incomparable subsets, i.e. that neither set is a subset of the other.

If furthermore $X$ is reflexive, then the closed subideals of $\mathcal{L}(Y^*,X^*)$ are not linearly ordered, and we have the following diagram.

\noindent
{\footnotesize
\xymatrix@R=0pt@C=23pt{    
 & & & \boxed{\mathcal{SSCS}} \ar@{-->}[dr] & \\
\boxed{\{0\}}\ar@{-->}[r] & \boxed{\mathcal{K}} \ar@{-->}[r] &
  \boxed{\mathcal{SSCS}\cap[\mathcal{G}_{\ell_2}]} \ar@{-->}[ur] \ar@{-->}[dr] & \bneq &
   \boxed{\mathcal{L}}\\
 & & & \boxed{[\mathcal{G}_{\ell_2}]} \ar@{-->}[ur] & 
}}\end{corollary}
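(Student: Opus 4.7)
The plan is to derive the diagram from two witnesses: Theorem \ref{Hadamard} already supplies an operator in $(\mathcal{FSS}\cap\mathcal{SSCS})\setminus[\mathcal{G}_{\ell_2}]$, so the remaining work is to produce a bounded non-$\mathcal{FSS}$ operator $X\to Y$ that factors through $\ell_2$. I would build this by realizing uniform copies of $\ell_2^n$ disjointly inside the embedded $\ell_q$ (or $c_0$) in $Y$, and pulling back through the Khintchine-complemented copy of $\ell_2$ sitting in $\ell_p\subseteq X$.

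First, invoke Theorem \ref{Hadamard}: in case $q<\infty$ take $T:=\widehat{J}\widehat{U}P$, and in case $q=\infty$ take $T:=JUP$. Either way $T\in(\mathcal{FSS}\cap\mathcal{SSCS})(X,Y)\setminus[\mathcal{G}_{\ell_2}](X,Y)$, giving $\mathcal{FSS}\not\subseteq[\mathcal{G}_{\ell_2}]$, and hence $\mathcal{FSS}\cap[\mathcal{G}_{\ell_2}]\subsetneq\mathcal{FSS}$ together with $[\mathcal{G}_{\ell_2}]\subsetneq\mathcal{L}$.

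Second, I would construct a bounded but non-$\mathcal{FSS}$ operator $B_0$ from $\ell_2$ into $\ell_q$ (for $q<\infty$) or into $c_0$ (for $q=\infty$). In the first case, the normalized Rademacher vectors $\tilde{r}_1,\dots,\tilde{r}_n\in\ell_q^{2^n}$ span, by Khintchine's inequality in $\ell_q$, a $K_q$-isomorphic copy $F_n\cong\ell_2^n$; placing the $F_n$'s into disjoint coordinate blocks of $\ell_q\cong(\bigoplus_n\ell_q^{2^n})_{\ell_q}$ and defining $B_0:(\bigoplus_n\ell_2^n)_{\ell_2}\to\ell_q$ as the block-diagonal map yields, via $\ell_2\hookrightarrow\ell_q$ applied to the scalar sequence of summand norms, a bounded operator with $b_n(B_0)\geq 1/K_q$. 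In the second case, use the fixed $2$-embeddings $\theta_n:\ell_2^n\to\ell_\infty^{k_n}$ from the introduction, arranged into disjoint blocks of $c_0$; the same computation gives $\|B_0\|\leq 2$ and $b_n(B_0)\geq 1/2$. Composing with $J$ yields a bounded $B:\ell_2\to Y$ that is not $\mathcal{FSS}$. Now let $P:X\to\ell_p$ be the given projection, $\iota_K:\ell_2\to\ell_p$ a Khintchine embedding, and $\pi:\ell_p\to\ell_2$ the corresponding projection (so $\pi\iota_K=I_{\ell_2}$); set $A:=\pi P$ and $T':=BA$. Then $T'\in\mathcal{G}_{\ell_2}(X,Y)\subseteq[\mathcal{G}_{\ell_2}](X,Y)$, and on each $\iota_K(E_n)\subseteq X$ (where $E_n\subseteq\ell_2$ witnesses $b_n(B)\geq c$) we have $T'\iota_K=B\pi\iota_K=B$, so $T'\notin\mathcal{FSS}(X,Y)$.

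The two witnesses together yield the incomparability $\mathcal{FSS}\bneq[\mathcal{G}_{\ell_2}]$, the remaining strict inclusions, and the non-linear-orderedness of the closed subideals of $\mathcal{L}(X,Y)$; the separation $\mathcal{K}\subsetneq\mathcal{FSS}\cap[\mathcal{G}_{\ell_2}]$ is witnessed by $JI_{2,q}A$ (or $JI_{2,0}A$), which is $\mathcal{FSS}$ by Milman but non-compact. For reflexive $X$, Theorem \ref{Hadamard} supplies $V$ (or $\widehat{V}$) in $(\mathcal{FSS}\cap\mathcal{SSCS})(Y^*,X^*)\setminus[\mathcal{G}_{\ell_2}]$, while the adjoint $(T')^*\in\mathcal{G}_{\ell_2}(Y^*,X^*)\setminus\mathcal{SSCS}$ by the $\mathcal{FSS}\leftrightarrow\mathcal{SSCS}$ duality of \cite[Theorem 4]{Pl04}, so the dual diagram follows identically. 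The main obstacle is engineering $B$ so that the block-diagonal $\ell_2$-sum stays bounded while the uniform lower bounds $b_n(B_0)\geq c$ survive simultaneously on every $\ell_2^n$ summand; once this is in place, the pull-back through the Khintchine projection and the adjoint passage are essentially bookkeeping.
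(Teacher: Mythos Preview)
Your overall architecture is right—invoke Theorem \ref{Hadamard} for the $\mathcal{FSS}\setminus[\mathcal{G}_{\ell_2}]$ witness, build an $\ell_2$-factorable non-$\mathcal{FSS}$ witness for the other direction, and dualize—but the construction of the second witness contains a genuine error. You write ``let $\iota_K:\ell_2\to\ell_p$ a Khintchine embedding, and $\pi:\ell_p\to\ell_2$ the corresponding projection (so $\pi\iota_K=I_{\ell_2}$).'' No such maps exist: for $p\neq 2$ the space $\ell_p$ is prime (every infinite-dimensional complemented subspace is isomorphic to $\ell_p$), so there is no complemented copy of $\ell_2$ in $\ell_p$. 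In fact $\ell_2$ does not even embed isomorphically into $\ell_p$, since every infinite-dimensional closed subspace of $\ell_p$ contains a further subspace isomorphic to $\ell_p$, and $\ell_p$ does not embed in $\ell_2$. The Khintchine/Rademacher phenomenon you have in mind lives in $L_p$, not $\ell_p$. Consequently your operator $A=\pi P$ is undefined, and both $T'$ and your witness $JI_{2,q}A$ for $\mathcal{K}\subsetneq\mathcal{FSS}\cap[\mathcal{G}_{\ell_2}]$ collapse.

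The fix is precisely the machinery set up in the introduction: instead of a single complemented copy of $\ell_2$, use the Pe\l czy\'nski isomorphism $D_p:\ell_p\to Z_p=(\bigoplus_n\ell_2^n)_{\ell_p}$, which gives \emph{uniformly} complemented copies of $\ell_2^n$ inside $\ell_p$. The $(p,q)$- or $(p,0)$-left Pe\l czy\'nski decomposition operator ($D_q^{-1}I_{2,p,q}D_p$ or $\theta I_{2,p,0}D_p$) then factors through $Z_2\cong\ell_2$ via $I_{2,p,q}=I_{2,2,q}I_{2,p,2}$, and is visibly not $\mathcal{FSS}$ because it acts as the identity on each $\ell_2^n$ summand. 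Pre- and post-composing with $A\pi$ and the embedding $J$ (or $\widehat{J}$) into $Y$ gives the required $\mathcal{G}_{\ell_2}\setminus\mathcal{FSS}$ witness in $\mathcal{L}(X,Y)$. For the $\mathcal{K}\subsetneq\mathcal{FSS}\cap[\mathcal{G}_{\ell_2}]$ arrow, simply use the formal identity $I_{p,q}=I_{2,q}I_{p,2}$ (or $I_{p,0}$), which is $\mathcal{FSS}$ by Milman, $\ell_2$-factorable, and non-compact. Your treatment of the dual diagram via \cite[Theorem 4]{Pl04} is fine once the primal witnesses are corrected.
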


\begin{proof}Let $\pi:X\to W$ be a projection onto a subspace $W$ which is isomorphic to $\ell_p$, and let $A:W\to\ell_p$ be an isomorphism.  Consider the case where $q\neq\infty$, and let $\widehat{J}:\ell_q\to Y$ be an embedding.  Then $\widehat{J}I_{p,q}A\pi$ is class $\mathcal{FSS}\cap\mathcal{G}_{\ell_2}$ but not $\mathcal{K}$.  Next, consider the case where $q=\infty$, and let $J:c_0\to Y$ be an embedding.  Then $JI_{p,0}A\pi$ is $\mathcal{FSS}\cap\mathcal{G}_{\ell_2}$ but not $\mathcal{K}$.  Since $\mathcal{FSS}$ is in full duality with $\mathcal{SSCS}$ and each of $\mathcal{K}$ and $\mathcal{G}_{\ell_2}$ is in full duality with itself, this gives us the first two arrows in each diagram.

In case $q\neq\infty$, let $\widehat{T}=D_q^{-1}I_{2,p,q}D_p$ be a $(p,q)$-Pe\l czy\'{n}ski operator.  Then $\widehat{J}\widehat{T}A\pi$ is class $\mathcal{G}_{\ell_2}$ but not $\mathcal{FSS}$ since it uniformly fixes copies of $\ell_2^n$ for all $n\in\mathbb{Z}^+$.  Due to the duality between $\mathcal{FSS}$ and $\mathcal{SSCS}$, its dual $(\widehat{J}\widehat{T}A\pi)^*$ is $\ell_2$-factorable but not $\mathcal{SSCS}$.  
 
Similarly, in case $q=\infty$, let $T=\theta I_{2,p,0}D_p$ be a $(p,0)$-left Pe\l czy\'{n}ski operator.  Then for the same reasons, $JTA\pi$ is class $\mathcal{G}_{\ell_2}$ but not $\mathcal{FSS}$, and its dual $(JTA\pi)^*$ is class $\mathcal{G}_{\ell_2}$ but not $\mathcal{SSCS}$.  Applying Theorem \ref{Hadamard} now completes the proof.\end{proof}

The second main Theorem follows straightforwardly from the above Corollary.

\begin{proof}[Proof of Theorem \ref{main2}]Let $X$ be a (real or complex) Banach space containing a complemented copy of $\ell_p$ and a copy of either $\ell_q$ or $c_0$.  We decompose $X=\ell_p\oplus Y$ for some subspace $Y$ of $X$, so that by Proposition \ref{copy-of-Z}, $Y$ contains a copy of either $\ell_q$ or $c_0$.  Since $p\in(1,2)$, the space $\ell_p$ is reflexive, and so by Corollary \ref{general-diagram} we can find incomparable closed subideals in $\mathcal{L}(\ell_p,Y)$ and $\mathcal{L}(Y^*,\ell_p^*)$.   By Proposition \ref{order-isomorphism}, this means $\mathcal{L}(X)$ and $\mathcal{L}(X^*)$ admit incomparable closed ideals.\end{proof}

\section{Closed ideals in $\mathcal{L}(\ell_p\oplus c_0)$ and $\mathcal{L}(\ell_1\oplus\ell_q)$, $1<p<2<q<\infty$}

In this section we will study the special cases of $\mathcal{L}(\ell_p\oplus c_0)$ and $\mathcal{L}(\ell_1\oplus\ell_q)$ for $1<p<2<q<\infty$.  We shall begin by summarizing what is currently known about the closed ideals in these algebras.  It was proved in \cite[Theorem 5.3.2]{Pi78} that $\mathcal{L}(\ell_p\oplus\ell_q)$, $1\leq p<q<\infty$, has exactly two maximal ideals, and that the lattice of further closed ideals is order-isomorphic to the lattice of closed subideals in $\mathcal{L}(\ell_p,\ell_q)$.  According to Proposition \ref{order-isomorphism}, we also have an injective and order-preserving relationship between the closed subideals in $\mathcal{L}(\ell_p,c_0)$, $1\leq p<\infty$, and the closed ideals in $\mathcal{L}(\ell_p\oplus c_0)$.  In \cite[Proposition 3.1]{SSTT07}, the authors showed that any ideal of $\mathcal{L}(\ell_1,\ell_q)$ containing a noncompact operator must also contain $I_{1,q}$.  By replacing ``$\ell_q$'' with ``$c_0$'' as needed in their proof, we obtain the analogous conclusion that any ideal of $\mathcal{L}(\ell_p,c_0)$ containing a noncompact operator must also contain $I_{p,0}$.

It was shown in \cite[Proposition 4]{Pl04} that $\mathcal{FSS}(\ell_p,c_0)$ is a proper closed subideal of $\mathcal{L}(\ell_p,c_0)$ for $1<p<\infty$.  In contrast, in \cite[Remark 5]{Pl04} the author observed that $\mathcal{FSS}(\ell_1,\ell_q)=\mathcal{L}(\ell_1,\ell_q)$.  However, we can consider superstrictly cosingular operators in place of $\mathcal{FSS}$.  Indeed, by duality we have that $\mathcal{SSCS}(\ell_1,\ell_q)$ is a proper closed subideal in $\mathcal{L}(\ell_1,\ell_q)$.

The following diagram captures the facts we have summarized so far regarding the lattice of closed subideals in $\mathcal{L}(\ell_p,c_0)$, $1<p<\infty$.

{\footnotesize \xymatrix@R=0pt@C=23pt{ \boxed{\{0\}} \ar@{=>}[r] & \boxed{\mathcal K} \ar@{=>}[r] &    \boxed{[\mathcal{G}_{I_{p,0}}]} \ar@{.>}[r] &   \boxed{\mathcal{FSS}} \ar@{-->}[r] & \boxed{\mathcal{L}} }}  
\noindent The notation comes from \cite{SSTT07}.  As in that paper, the various types of arrows represent inclusions.  A solid single-bar arrow ($\to$) is an immediate successor (i.e., no ideals sitting in between), while a double solid arrow ($\Rightarrow$) denotes a unique immediate successor.  A hyphenated arrow ($--\!\!\!>$) represents a proper inclusion, and a dotted arrow ($\cdots\!\!\!>$) is an inclusion which we do not know whether it is proper.

Let us also give a diagram of the facts so far regarding the closed subideal structure of $\mathcal{L}(\ell_1,\ell_q)$, $1<q<\infty$.

{\footnotesize \xymatrix@R=0pt@C=23pt{ \boxed{\{0\}} \ar@{=>}[r] & \boxed{\mathcal K} \ar@{=>}[r] &    \boxed{[\mathcal{G}_{I_{1,q}}]} \ar@{.>}[r] &   \boxed{\mathcal{SSCS}} \ar@{-->}[r] & \boxed{\mathcal{L}} }} 

Fix $1<p<2$.  Let $I_{\bv,\infty}$ be any of the operators from Theorem \ref{main1-utility} (with $q=\infty$), and let $T$ be a $(p,0)$-left Pe\l czy\'{n}ski operator.  We will show in this section that the following diagram represents part of the closed subideal structure of $\mathcal{L}(\ell_p,c_0)$.

\noindent
{\footnotesize
\xymatrix@R=0pt@C=23pt{    
\boxed{\{0\}} \ar@{=>}[r] & \boxed{\mathcal K} \ar@{=>}[r] & 
  \boxed{[\mathcal{G}_{I_{p,0}}]} \ar@{-->}[r] &
  \boxed{[\mathcal{G}_{I_{\bv,\infty}}]}
}}

\noindent
{\footnotesize
\xymatrix@R=0pt@C=23pt{    
 \;\;\;\;\;\;\;\;\;\;\;\;\;\;\;\;\;\;\;\;\;\;\;\;\;\;\;\;\;\;\;\;\;\;\;\;\;\;\;\;\; & & & \boxed{\mathcal{FSS}} \ar@{=>}[dr] & & \\
 \;\;\;\;\;\;\;\;\;\;\;\;\;\;\;\;\;\;\;\;\;\;\;\;\;\;\;\;\;\;\;\;\;\;\;\;\;\;\;\;\; & \ar@{-->}[r] & 
  \boxed{\mathcal{FSS}\cap[\mathcal{G}_{\ell_2}]} \ar@{-->}[ur] \ar[dr] & \bneq &
  \boxed{[\mathcal{FSS}+\mathcal{G}_T]} \ar@{.>}[r] & \boxed{\mathcal{L}}
  \\
 \;\;\;\;\;\;\;\;\;\;\;\;\;\;\;\;\;\;\;\;\;\;\;\;\;\;\;\;\;\;\;\;\;\;\;\;\;\;\;\;\; & & & \boxed{[\mathcal{G}_T]} \ar@{-->}[ur] & &
}}

\noindent Here, the ``not equal to'' symbol ($\neq$) means that neither subideal is a subspace of the other.  Note again that the new diagram for $\mathcal{L}(\ell_p,c_0)$ is only proved for $1<p<2$.

Using duality, we will also prove the following structure for the closed subideals of $\mathcal{L}(\ell_1,\ell_q)$ for all $2<q<\infty$.

\noindent
{\footnotesize
\xymatrix@R=0pt@C=23pt{    
& & & & & \boxed{\mathcal{SSCS}} \ar@{-->}[dr] & \\
\boxed{\{0\}} \ar@{=>}[r] & \boxed{\mathcal K} \ar@{=>}[r] & 
  \boxed{[\mathcal{G}_{I_{1,q}}]} \ar@{-->}[r] & \boxed{[\mathcal{G}_{I_{\bv,\infty}}^*]}\ar@{-->}[r] &
  \boxed{\mathcal{SSCS}\cap[\mathcal{G}_{\ell_2}]} \ar@{-->}[ur] \ar@{-->}[dr] & \bneq & \boxed{\mathcal{L}}\\
& & & & & \boxed{[\mathcal{G}_{\ell_2}]} \ar@{-->}[ur] &
}}

\noindent Again, note that this diagram only holds for $2<q<\infty$.

We begin our proof by giving some basic norm estimates, which we will need momentarily.  Recall that if $X$ is a Banach space with a basis $(x_n)_{n=1}^\infty$, and $k\in\mathbb{Z}^+$, then the {\bf $\boldsymbol{k}$th partial sum projection} with respect to $(x_n)_{n=1}^\infty$ is the continuous linear operator $S_k\in\mathcal{L}(X)$ defined by
\[S_k\sum_{n=1}^\infty a_nx_n=\sum_{n=1}^k a_nx_n\;\;\;\text{ for all }\;\;\;x=\sum_{n=1}^\infty a_nx_n\in X.\]
Where convenient, we shall define $S_0=0$.

\begin{proposition}\label{truncate-1}Let $X$ be a (real or complex) Banach space with a basis and corresponding and partial sum projections $(S_k)_{k=1}^\infty$, and let $E$ be finite-dimensional subspace of $X$.  For every $\delta>0$ there exists $N\in\mathbb{Z}^+$ such that $\norm{e-S_ke}\leq\delta\norm{e}$ for all $e\in E$ and $k\geq N$.\end{proposition}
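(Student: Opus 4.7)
The plan is a standard compactness argument exploiting the uniform boundedness of the partial sum projections. Let $K$ denote the basis constant of $(x_n)$, so that $\sup_k \norm{S_k}\leq K$ and hence $\sup_k \norm{I - S_k}\leq K+1$. For any fixed $x\in X$ we have $S_k x\to x$ as $k\to\infty$; the goal is to upgrade this pointwise convergence to uniform convergence on the unit sphere $S_E=\{e\in E:\norm{e}=1\}$.

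First, I would fix $\delta>0$ and choose a finite $\frac{\delta}{2(K+1)}$-net $\{e_1,\dots,e_m\}$ for $S_E$, which exists because $E$ is finite-dimensional and hence $S_E$ is compact. Next, using pointwise convergence, I would pick $N\in\mathbb{Z}^+$ large enough that
\[\norm{e_i - S_k e_i}\leq\frac{\delta}{2}\qquad\text{for all }k\geq N\text{ and all }i=1,\dots,m.\]
Then for an arbitrary $e\in S_E$ and $k\geq N$, choose $e_i$ with $\norm{e-e_i}\leq\frac{\delta}{2(K+1)}$ and estimate
\[\norm{e-S_ke}\leq\norm{(I-S_k)(e-e_i)}+\norm{e_i-S_ke_i}\leq(K+1)\cdot\frac{\delta}{2(K+1)}+\frac{\delta}{2}=\delta.\]
Homogeneity then yields $\norm{e-S_ke}\leq\delta\norm{e}$ for all $e\in E$, which is the desired conclusion.

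There is no real obstacle here; the only point requiring care is the uniform boundedness of $(I-S_k)$, which is immediate from the definition of the basis constant. The argument works verbatim whether $X$ is real or complex. Note that if one only assumes $X$ has a Schauder basis (not necessarily normalized or monotone), the constant $K$ still exists by the Banach--Steinhaus theorem applied to the sequence $(S_k)$, since $S_k x\to x$ for every $x\in X$ forces $\sup_k\norm{S_k}<\infty$.
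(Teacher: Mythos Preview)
Your proof is correct, and it proceeds by a genuinely different (though equally elementary) route from the paper's argument. The paper fixes a normalized basis $\{e_1,\dots,e_n\}$ for $E$, uses the fact that any such basis is $K$-equivalent to the canonical $\ell_1^n$ basis for some constant $K$, picks $N$ so that $\norm{e_i-S_ke_i}\leq\delta/K$ for each $i$ and all $k\geq N$, and then concludes via
\[\norm{e-S_ke}\leq\frac{\delta}{K}\sum_{i=1}^n|a_i|\leq\delta\norm{e}\]
for $e=\sum_ia_ie_i$. Thus the paper exploits linearity directly and the constant that enters is the equivalence constant of the chosen basis of $E$ to $\ell_1^n$, whereas you use compactness of the unit sphere of $E$ and the constant that enters is the basis constant of the ambient Schauder basis (via $\sup_k\norm{I-S_k}\leq K+1$). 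Your argument has the minor advantage of being the generic ``pointwise convergence of uniformly bounded operators is uniform on compacta'' template, so it applies verbatim to any equicontinuous family converging pointwise; the paper's argument is slightly more explicit about how $N$ depends on $E$. Either way the result is routine.
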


\begin{proof}Let $\{e_1,\cdots,e_n\}$, $n=\dim(E)$, be a normalized basis for $E$, and let $K>0$ be such that it is $K$-equivalent to the canonical basis of $\ell_1^n$.  Notice that for each $i=1,\cdots,n$, we can find $k_i\in\mathbb{Z}^+$ such that $\norm{e_i-S_ke_i}\leq\delta/K$ for all $k\geq k_i$.  Let $N:=\max\{k_1,\cdots,k_n\}$.  Then for any $e=\sum_{i=1}^na_ie_i\in E$ and $k\geq N$ we have
\[\norm{e-S_ke}=\norm{\sum_{i=1}^na_i(e_i-S_ke_i)}\leq\frac{\delta}{K}\sum_{i=1}^n|a_i|\leq\delta\norm{\sum_{i=1}^na_ie_i}=\delta\norm{e}.\]\end{proof}

\begin{proposition}\label{truncate-2}Let $X$ be a (real or complex) Banach space with a basis and corresponding partial sum projections $(S_k)_{k=1}^\infty\subseteq\mathcal{L}(X)$, and let $E$ be a $n$-dimensional subspace, $n\in\mathbb{Z}^+$, of $X$.  Let $T\in\mathcal{L}(X,Y)$ for some Banach space $Y$, and such that $T|_E$ is bounded below by $\epsilon>0$, i.e. $\norm{Te}\geq\epsilon\norm{e}$ for all $e\in E$.  Then for every $\delta\in(0,\epsilon)$, there exists $N\in\mathbb{Z}^+$ such that $S_NE$ is $n$-dimensional and $T|_{(S_NE)}$ is bounded below by $\delta$, i.e. $\norm{TS_Ne}\geq\delta\norm{S_Ne}$ for all $e\in E$.\end{proposition}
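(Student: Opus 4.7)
The plan is to deduce Proposition \ref{truncate-2} directly from Proposition \ref{truncate-1} via a perturbation argument. First I would fix an auxiliary parameter $\eta\in(0,1)$ to be chosen at the end, and apply Proposition \ref{truncate-1} to $E$ and $\eta$ to obtain $N\in\mathbb{Z}^+$ such that
\[\norm{e-S_Ne}\leq\eta\norm{e}\quad\text{for all }e\in E.\]
From this estimate I would extract two immediate consequences: the reverse triangle inequalities
\[(1-\eta)\norm{e}\leq\norm{S_Ne}\leq(1+\eta)\norm{e}\qquad(e\in E),\]
so that $S_N|_E$ is injective (hence $S_NE$ is $n$-dimensional).

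Next, for any $e\in E$ I would estimate
\[\norm{TS_Ne}\geq\norm{Te}-\norm{T(e-S_Ne)}\geq\epsilon\norm{e}-\eta\norm{T}\cdot\norm{e}=(\epsilon-\eta\norm{T})\norm{e},\]
and then use $\norm{e}\geq\norm{S_Ne}/(1+\eta)$ to obtain
\[\norm{TS_Ne}\geq\frac{\epsilon-\eta\norm{T}}{1+\eta}\norm{S_Ne}.\]
It then suffices to choose $\eta\in(0,1)$ small enough that $(\epsilon-\eta\norm{T})/(1+\eta)\geq\delta$, i.e.
\[\eta\leq\frac{\epsilon-\delta}{\norm{T}+\delta},\]
which is possible since $\epsilon>\delta$. (The degenerate case $T=0$ on $E$ cannot occur, since $T|_E$ is bounded below by $\epsilon>0$ and $n\geq 1$.)

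There is no real obstacle here; the entire argument reduces to choosing $\eta$ in terms of the prescribed tolerances and invoking Proposition \ref{truncate-1}. The only mild subtlety is being careful to let $\eta$ depend on $\norm{T}$ as well as on $\epsilon-\delta$, so that both the injectivity of $S_N|_E$ and the quantitative lower bound are achieved simultaneously for the same $N$.
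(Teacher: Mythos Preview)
Your proof is correct and follows essentially the same approach as the paper: apply Proposition \ref{truncate-1} with a small parameter, use the resulting two-sided estimate $(1-\eta)\norm{e}\leq\norm{S_Ne}\leq(1+\eta)\norm{e}$ to get injectivity of $S_N|_E$, and then transfer the lower bound via the triangle inequality. The only cosmetic difference is that the paper converts both the $\epsilon\norm{e}$ term and the $\norm{T}\gamma\norm{e}$ term separately into multiples of $\norm{S_Ne}$ (using the two sides of the estimate), arriving at the constraint $\delta\leq\frac{\epsilon}{1+\gamma}-\frac{\norm{T}\gamma}{1-\gamma}$, whereas you bundle them first and use only $\norm{e}\geq\norm{S_Ne}/(1+\eta)$; your constraint $\eta\leq(\epsilon-\delta)/(\norm{T}+\delta)$ is in fact slightly less restrictive, but the arguments are the same in substance.
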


\begin{proof}Since $\delta\in(0,\epsilon)$, we can find $\gamma\in(0,1)$ such that
\[\delta\leq\frac{\epsilon}{1+\gamma}-\frac{\norm{T}\gamma}{1-\gamma}.\]
By Proposition \ref{truncate-1}, c $N\in\mathbb{Z}^+$ such that $\norm{e-S_Ne}\leq\gamma\norm{e}$ for all $e\in E$.  Thus,
\[\norm{e}=\frac{1}{1-\gamma}(\norm{e}-\gamma\norm{e})\leq\frac{1}{1-\gamma}\left(\norm{S_Ne}+\norm{e-S_Ne}-\gamma\norm{e}\right)\leq\frac{1}{1-\gamma}\norm{S_Ne}.\]
This shows that $\dim(S_NE)=\dim(E)$.  Together with
\[\norm{S_Ne}\leq\norm{e}+\norm{e-S_Ne}\leq(1+\gamma)\norm{e},\]
it also gives us
\begin{multline*}\frac{\epsilon}{1+\gamma}\norm{S_Ne}\leq\epsilon\norm{e}\leq\norm{Te}\leq\norm{TS_Ne}+\norm{T}\norm{e-S_Ne}\\\leq\norm{TS_Ne}+\norm{T}\gamma\norm{e}\leq\norm{TS_Ne}+\frac{\norm{T}\gamma}{1-\gamma}\norm{S_Ne},\end{multline*}
and finally
\[\delta\norm{S_Ne}\leq\left(\frac{\epsilon}{1+\gamma}-\frac{\norm{T}\gamma}{1-\gamma}\right)\norm{S_Ne}\leq\norm{TS_Ne}.\]\end{proof}

We will also need to recall the following obvious consequence of the Second Isomorphism Theorem for modules.

\begin{proposition}\label{intersection-dimension}Let $E$ be a finite-dimensional subspace and let $Z$ be an infinite-dimensional subspace of a vector space $X$.  Then
\[\dim(E)-\dim(X/Z)\leq\dim(E\cap Z).\]\end{proposition}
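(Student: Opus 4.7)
The plan is to invoke the Second Isomorphism Theorem exactly as the statement hints. Consider the sum $E+Z$, which is a subspace of $X$ containing $Z$. The Second Isomorphism Theorem provides a natural vector space isomorphism
\[
E/(E\cap Z)\;\cong\;(E+Z)/Z,
\]
where the right-hand side is a subspace of the quotient $X/Z$.

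From this isomorphism I would deduce that $\dim(E/(E\cap Z))\leq\dim(X/Z)$. Since $E$ is finite-dimensional, $\dim(E/(E\cap Z))=\dim(E)-\dim(E\cap Z)$, and rearranging gives the desired inequality $\dim(E)-\dim(X/Z)\leq\dim(E\cap Z)$.

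The only mild subtlety is what to do when $\dim(X/Z)=\infty$; in that case the stated inequality is trivially satisfied (the right-hand side is a nonnegative integer and the left-hand side is $-\infty$, or one simply notes the claim becomes vacuous). So the real content is the finite-codimensional case, where the Second Isomorphism Theorem gives everything in one line. There is no genuine obstacle here — this is a one-step bookkeeping proof, included only because it will be applied in the sequel.
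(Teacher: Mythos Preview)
Your proposal is correct and follows exactly the same approach as the paper: apply the Second Isomorphism Theorem to obtain $E/(E\cap Z)\cong(E+Z)/Z$, bound the dimension of the right-hand side by $\dim(X/Z)$, and rearrange using finite-dimensionality of $E$. Your remark on the infinite-codimension case is a harmless addition the paper omits.
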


\begin{proof}By the Second Isomorphism Theorem for modules we have
\[E/(E\cap Z)\cong(E+Z)/Z\]
so that
\[\dim(E)-\dim(E\cap Z)=\dim(E/(E\cap Z))=\dim((E+Z)/Z)\leq\dim(X/Z)\]
and hence
\[\dim(E)-\dim(X/Z)\leq\dim(E\cap Z).\]\end{proof}

Now let us prove the following Theorem, which is very closely analogous to \cite[Theorem 4.11]{SSTT07}.  Our methods are almost identical to theirs, and so we will not deviate too far from their proof.  Let us use the following notation in our proof.  If $X$ is a Banach space with a basis $(x_n)$, then for any $x\in X$ we write
\[\text{supp}(x)=\left\{n\in\mathbb{Z}^+:a_n\neq 0,x=\sum_{n=1}^\infty a_nx_n\right\},\]
and if $E$ is a subset of $X$ then we define
\[\text{supp}(E)=\bigcup_{e\in E}\text{supp}(e).\]

\begin{theorem}\label{LPDO}Let $1<p<\infty$, and let $T\in\mathcal{L}(\ell_p,c_0)$ be any $(p,0)$-left Pe\l czy\'{n}ski decomposition operator.  If $R\in\mathcal{L}(\ell_p,c_0)\setminus\mathcal{FSS}(\ell_p,c_0)$ then $T$ factors through $R$.\end{theorem}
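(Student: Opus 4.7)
The plan is to adapt the block-construction style of proof used in \cite[Theorem 4.11]{SSTT07} to the present setting. Write $T=\theta I_{2,p,0}D_p$, and let $(e_{n,j})_{n\in\mathbb{Z}^+,\,1\leq j\leq n}$ denote the basis of $\ell_p$ obtained by pulling back the canonical basis of $Z_p$ through $D_p$, so that $Te_{n,j}=\theta_n f_{n,j}$ lies in the $n$th $\ell_\infty^{k_n}$-block of $c_0$. Since $R\notin\mathcal{FSS}$, fix $\epsilon>0$ witnessing this. The goal is to produce $A\in\mathcal{L}(\ell_p)$ and $B\in\mathcal{L}(c_0)$ with $T=BRA$ by building, inside $\ell_p$, block subspaces $W_n$ mirroring the $\ell_2^n$-structure of $Z_p$, and arranging that $R$ sends these to disjointly supported $\ell_2^n$-like subspaces of $c_0$, mimicking the pattern of $\theta I_{2,p,0}$.

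Concretely, I would construct inductively disjointly supported block subspaces $W_n\subseteq\ell_p$ (in the standard $\ell_p$-basis), supported on intervals $[m_{n-1}+1,m_n]$, with $\dim W_n$ as large as we please, $\|Rw\|\geq(\epsilon/2)\|w\|$ for $w\in W_n$, and $RW_n$ essentially supported on an interval $[\ell_{n-1}+1,\ell_n]$ of $c_0$. The inductive step goes as follows: having chosen $m_{n-1},\ell_{n-1}$, set $R_{n-1}=(I-S_{\ell_{n-1}}^{c_0})R(I-S_{m_{n-1}}^{\ell_p})$, which differs from $R$ by a finite-rank operator and hence is still not $\mathcal{FSS}$; pick a subspace $V$ on which $R$ is bounded below by $\epsilon$ with $\dim V$ much larger than $m_{n-1}+\mathrm{rank}(R-R_{n-1})$; by Proposition \ref{intersection-dimension}, intersect $V$ with $\ker S_{m_{n-1}}^{\ell_p}$ and $\ker(R-R_{n-1})$ to obtain a large subspace on which $R=R_{n-1}$ and $v$ is supported beyond $m_{n-1}$, so that $Rv$ is \emph{exactly} supported on coordinates past $\ell_{n-1}$; then apply Proposition \ref{truncate-2} with $S_{m_n}^{\ell_p}$ to truncate to $[m_{n-1}+1,m_n]$ while preserving a lower bound $\epsilon/2$, and Proposition \ref{truncate-1} on the image side to choose $\ell_n$ so that $RW_n$ is within $\eta_n$ of its projection onto $[\ell_{n-1}+1,\ell_n]$ in $c_0$, with $\sum_n n\eta_n$ summable. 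Since $W_n$ is a block subspace of $\ell_p$ of large dimension, one finds inside it, via Khintchine's inequality applied to a Rademacher system in $\ell_p^{m_n-m_{n-1}}$, an $n$-dimensional subspace $Z_n$ with a basis $(z_{n,j})_{j=1}^n$ that is $C$-equivalent to the canonical $\ell_2^n$-basis, with $C$ independent of $n$.

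Now define $A\in\mathcal{L}(\ell_p)$ by $Ae_{n,j}=z_{n,j}$; boundedness follows because both $(e_{n,j})$ (via $D_p$) and $(z_{n,j})$ (via the disjoint $\ell_p$-supports of the $W_n$'s together with the $\ell_2^n$-behavior inside each block) satisfy $\|\sum a_{n,j}e_{n,j}\|\sim(\sum_n(\sum_j a_{n,j}^2)^{p/2})^{1/p}$. Define $B\in\mathcal{L}(c_0)$ blockwise: on the $n$th $c_0$-block $\ell_\infty^{\ell_n-\ell_{n-1}}$, use the injectivity of $\ell_\infty^{k_n}$ to extend the finite-dimensional map sending the $[\ell_{n-1}+1,\ell_n]$-portion of $Rz_{n,j}$ to $\theta_n f_{n,j}$ (which has norm uniformly controlled by the $\ell_2^n$-to-$\ell_2^n$ comparison between $(Rz_{n,j})_j$ and $(\theta_nf_{n,j})_j$) to the whole block, and assemble the pieces into $B$ using that both the sources and targets of the blockwise maps are disjointly supported in $c_0$.

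The main obstacle is Step 4: verifying $T=BRA$ exactly rather than only approximately. The ``lower'' support separation of $Rw$ is exact by construction (via $\ker(R-R_{n-1})$), but the ``upper'' separation is only up to the error $\eta_n$; this produces a perturbation $T-BRA$ whose action on the basis $(e_{n,j})$ has norm bounded by $C\eta_n$. By the summability of $n\eta_n$, this perturbation is a compact operator in $\mathcal{L}(\ell_p,c_0)$, and since compact operators lie in the closed ideal generated by any non-$\mathcal{FSS}$ operator (via the analog of \cite[Proposition 3.1]{SSTT07} discussed at the start of Section 5), one absorbs it by adjusting $A$ and $B$, producing a genuine single-step factorization $T=BRA$.
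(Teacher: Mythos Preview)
Your overall strategy matches the paper's --- both adapt \cite[Theorem~4.11]{SSTT07} --- but there is a genuine gap at the final step. You obtain only $T=BRA+K$ with $K$ compact, and then propose to ``absorb'' $K$ using $\mathcal{K}\subseteq[\mathcal{G}_R]$. That yields at best $T\in[\mathcal{G}_R](\ell_p,c_0)$, whereas the theorem asserts $T\in\mathcal{G}_R(\ell_p,c_0)$: a \emph{single} factorization $T=\widetilde B R\widetilde A$. Even if you could show that $K$ itself factors through $R$ as $K=B'RA'$, you would only get $T=BRA+B'RA'$, and there is no reason $\mathcal{G}_R$ is closed under addition here --- that would require $R\oplus R$ to factor through $R$, which is not available for an arbitrary non-$\mathcal{FSS}$ operator. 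So the ``adjusting $A$ and $B$'' step, as stated, does not deliver the conclusion. (There is also a smaller slip: after you truncate via $S_{m_n}^{\ell_p}$, the resulting vectors need no longer lie in $\ker(R-R_{n-1})$, so the ``exact'' lower support separation of $Rw$ is lost; this is fixable by reordering the truncation and intersection, but it should be noted.)

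The paper avoids the whole issue by using the \emph{separable injectivity of $c_0$} in place of the blockwise injectivity of $\ell_\infty^{k_n}$. Having built $E_n\subseteq\ell_p$ with $R_n:=R|_{E_n}$ an isomorphism onto $F_n:=RE_n$, and $2$-isomorphisms $U_n:\ell_2^n\to E_n$, one defines the left factor $J$ directly on $\operatorname{span}\bigcup_nF_n$ by $J|_{F_n}=Q_n\theta_nU_n^{-1}R_n^{-1}$. The approximate disjointness of the $F_n$'s is used only to show that any normalized selection $(y_n)$ with $y_n\in F_n$ is $2$-equivalent to the $c_0$ basis, which gives boundedness of $J$. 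One then extends $J$ to $\widetilde J\in\mathcal{L}(c_0)$ by Sobczyk's theorem, obtaining the \emph{exact} identity $\theta I_{2,p,0}=\widetilde J\,R\,U$, where $U=\bigoplus_nU_n:Z_p\to\ell_p$. No compact error term ever appears, so no absorption argument is needed.
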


\begin{proof}Let $(k_n)_{n=1}^\infty$, $\theta=\bigoplus_{n=1}^\infty\theta_n$, and $I_{2,p,0}$ be as in the definition of the $(p,0)$-left Pe\l czy\'{n}ski decomposition operator $T=\theta I_{2,p,0}D_p$.  It is enough to show that $\theta I_{2,p,0}$ factors through $R$.  To do it, we will closely follow the proof of \cite[Theorem 4.11]{SSTT07}, only making some crucial modifications along the way.

Since $R$ is not $\mathcal{FSS}$, there exists $\epsilon>0$ and a sequence $(E_n)_{n=0}^\infty$ of subspaces of $\ell_p$ such that $\dim(E_n)=n$ for all $n\in\mathbb{Z}^+$, and such that $\norm{Rx}\geq\epsilon\norm{x}$ for all $x\in\bigcup_{n=1}^\infty E_n$.  Due to Proposition \ref{truncate-2}, we can assume $\text{supp}(E_n)<\infty$ for all $n\in\mathbb{N}$, adjusting $\epsilon$ if necessary.  Let $(F_n)_{n=1}^\infty$ be defined by setting $F_n:=RE_n$ for each $n\in\mathbb{N}$, and let $(S_k)_{k=0}^\infty$ be the canonical partial sum projections lying in $\mathcal{L}(c_0)$.  Choose $\gamma\in(0,1/2)$ to satisfy $(1+\gamma)/(1-\gamma)\leq 2$.

Let's inductively construct sequences $(\widehat{E}_n)_{n=0}^\infty$ and $(\widehat{F}_n)_{n=0}^\infty$, and a strictly increasing sequence $(m_n)_{n=0}^\infty\subseteq\mathbb{N}$, such that the following conditions are satisfied for all $n\in\mathbb{Z}^+$.
\begin{itemize}\item[(i)]  $m_{n-1}<\text{supp}(\widehat{E}_n)$;
\item[(ii)]  $m_{n-1}<\text{supp}(\widehat{F}_n)$;
\item[(iii)]  $\text{supp}(\widehat{E}_n)\leq m_n$;
\item[(iv)]  $\widehat{F}_n=R\widehat{E}_n$;
\item[(v)]  $\norm{Rx}\geq\epsilon\norm{x}$ for all $x\in\widehat{E}_n$;
\item[(vi)]  $\norm{y-S_{m_n}y}\leq\gamma 2^{-n}\norm{y}$ for all $y\in\widehat{F}_n$; and
\item[(vii)]  $\dim(\widehat{E}_n)=\dim(\widehat{F}_n)=n$.\end{itemize}
First, set $m_0=0$, $\widehat{E}_0=\{0\}$, and $\widehat{F}_0=\{0\}$, and suppose that we have constructed $\widehat{E}_i$, $\widehat{F}_i$, and $m_i$, for all $i<n$ and some $n\in\mathbb{Z}^+$.  Let $G$ and $G'$ be the subspaces of $\ell_p$ and $c_0$, respectively, consisting of all the vectors whose first $m_{n-1}$ coordinates are zero.  Put $k:=2m_{n-1}+n$.  Due to Proposition \ref{intersection-dimension}, we now have
\begin{multline*}m_{n-1}+n=(2m_{n-1}+n)-m_{n-1}=\dim(F_k)-\dim(c_0/G')\\\leq\dim(F_k\cap G')=\dim(R|_{E_k}^{-1}(F_k\cap G')).\end{multline*}
Again due to Proposition \ref{intersection-dimension}, we get
\begin{multline*}n=(m_{n-1}+n)-m_{n-1}\leq\dim(R|_{E_k}^{-1}(F_k\cap G'))-\dim(\ell_p/G)\\\leq\dim(R|_{E_k}^{-1}(F_k\cap G')\cap G)\end{multline*}
Let $\widehat{E}_n$ be an $n$-dimensional subspace of $R|_{E_k}^{-1}(F_k\cap G')\cap G$, and set $\widehat{F}_n=R\widehat{E}_n$.  Then (i), (ii), (iv), and (vii) are all satisfied for this $n$.   Notice that $\widehat{E}_n\subseteq E_k$, so that (v) is also satisfied.  By Proposition \ref{truncate-1}, we can find $N\in\mathbb{Z}^+$ such that $\norm{y-S_ky}\leq\gamma 2^{-n}\norm{y}$ for all $y\in F_n$ and $k\geq N$.  If we pick $m_n=\max\{\max\text{supp}(\widehat{E}_n),N\}$, this satisfies (iii) and (vi), and the construction is complete.

For convenience, let us relabel $E_n=\widehat{E}_n$ and $F_n=\widehat{F}_n$ for all $n\in\mathbb{Z}^+$.  Recall that for each $k\in\mathbb{Z}^+$ there exists $n_k\in\mathbb{Z}^+$ such that every $n_k$-dimensional subspace of $\ell_p$ contains an $k$-dimensional subspace which is 2-isomorphic to $\ell_2^k$ (cf., e.g., \cite[Theorem 12.3.3]{AK06}).  Thus, by passing to subspaces of a suitable subsequence, we can assume that each $E_n$ is 2-isomorphic to $\ell_2^n$.  Then pass to a matching subsequence of $(m_n)$, and relabel each $F_n=RE_n$, so that properties (i)-(vii) above are preserved.  In addition to these properties, for each $n\in\mathbb{Z}^+$, there now exists an isomorphism $U_n:\ell_2^n\to E_n$ such that $\norm{U_n}\leq 2$ and $\norm{U_n^{-1}}\leq 2$.

We claim that any normalized sequence $(y_n)_{n=1}^\infty\subseteq c_0$ such that $y_n\in F_n$ for each $n\in\mathbb{Z}^+$ is basic and 2-equivalent to the canonical basis $(f_n)_{n=1}^\infty$ of $c_0$; in particular this means the $F_n$'s are all linearly independent.  Indeed, due to property (ii) above, the sequence $(S_{m_n}y_n)_{n=1}^\infty$ is a block sequence of $(f_n)_{n=1}^\infty$.  Also, by definition of the $c_0$-norm together with property (vi), we have
\[1=\norm{y_n}=\max\{\norm{S_{m_n}y_n},\norm{y_n-S_{m_n}y_n}\}\leq\max\{\norm{S_{m_n}y_n},\gamma 2^{-n}\}.\]
Since $\gamma 2^{-n}<1$ and $\norm{S_{m_n}y_n}\leq\norm{y_n}=1$, this means $\norm{S_{m_n}y_n}=1$.  Thus, $(S_{m_n}y_n)_{n=1}^\infty$ is a normalized block basis of $(f_n)_{n=1}^\infty$, so that it is 1-equivalent to $(f_n)$ (cf., e.g., \cite[Lemma 2.1.1]{AK06}).  On the other hand, notice that
\[2\sum_{n=1}^\infty\frac{\norm{y_n-S_{m_n}y_n}}{\norm{S_{m_n}y_n}}\leq 2\sum_{n=1}^\infty 2^{-n}\gamma=2\gamma<1\]
so that by the Principle of Small Perturbations (cf., e.g., \cite[Theorem 1.3.9]{AK06}), $(y_n)_{n=1}^\infty$ is $(1+\gamma)/(1-\gamma)$-equivalent to $(S_{m_n}y_n)_{n=1}^\infty$ and hence, due to $(1+\gamma)/(1-\gamma)\leq 2$, it is 2-equivalent to $(f_n)_{n=1}^\infty$.

For each $n\in\mathbb{Z}^+$, define $R_n:E_n\to F_n$ by the rule $R_nx=Rx$ for all $x\in E_n$.  Then each $R_n$ is an invertible operator satisfying $\norm{R_n}\leq\norm{R}$ and $\norm{R_n^{-1}}\leq 1/\epsilon$.  For each $n\in\mathbb{Z}^+$, let us also define an operator
\[J_n=\theta_nU_n^{-1}R_n^{-1}:F_n\to\ell_\infty^{k_n}.\]
Notice that this means $\norm{J_n}\leq 4/\epsilon$ for all $n\in\mathbb{Z}^+$.  Let us also, for each $n\in\mathbb{Z}^+$, denote by
\[Q_n:\ell_\infty^{k_n}\to\left(\bigoplus_{n=1}^\infty\ell_\infty^{k_n}\right)_{c_0}\]
the canonical norm-1 embedding.  Due to the linear independence of the $F_n$'s, we can now define a linear map
\[J:\text{span}\bigcup_{n=1}^\infty F_n\to\left(\bigoplus_{n=1}^\infty\ell_\infty^{k_n}\right)_{c_0}\]
by the rule $Jy=Q_nJ_ny$ for all $y\in F_n$ and $n\in\mathbb{Z}^+$.  Let us show that $J$ is bounded.  For any nonzero $y\in\text{span}\bigcup_{n=1}^\infty F_n$, we can write $y=\sum_{k=1}^jy_k$ for some $j\in\mathbb{Z}^+$, where $(F_{i_k})_{k=1}^j$ is a subsequence and $y_k\in F_{i_k}\setminus\{0\}$ for each $k=1,\cdots,j$.  Since every normalized basic sequence formed by single elements in each $F_n$ is 2-equivalent to $(f_n)_{n=1}^\infty$, this gives us
\[\norm{Jy}=\norm{J\sum_{k=1}^jy_k}=\sup_{1\leq k\leq j}\norm{Q_{i_k}J_{i_k}y_k}\leq\frac{4}{\epsilon}\sup_{1\leq k\leq j}\norm{y_k}\leq\frac{8}{\epsilon}\norm{\sum_{k=1}^j\norm{y_k}\frac{y_k}{\norm{y_k}}}=\frac{8}{\epsilon}\norm{y}.\]
Thus, $J$ extends to
\[\widetilde{J}:c_0\to\left(\bigoplus_{n=1}^\infty\ell_\infty^{k_n}\right)_{c_0}\]
via the separable injectivity of $\left(\bigoplus_{n=1}^\infty\ell_\infty^{k_n}\right)_{c_0}\cong c_0$ (cf., e.g., \cite[VII, p72]{Di84}).

Next, define an operator
\[U:\left(\bigoplus_{n=1}^\infty\ell_2^n\right)_{\ell_p}\to\ell_p\]
by the rule
\[U\bigoplus_{n=1}^\infty x_n=\sum_{n=1}^\infty U_nx_n.\]
Actually, it is not yet clear that it is possible to define $U$, except of finitely-supported $\oplus_{\ell_p}$-sums.  Let us show that when restricted to finite support, $U$ is bounded, and hence well-defined on the whole space via continuous extension.  Since the $E_n$'s are disjointly supported in $\ell_p$, and $\norm{U_n}\leq 2$ for all $n\in\mathbb{Z}^+$, we have
\[\norm{U\bigoplus_{n=1}^\infty x_n}=\norm{\sum_{n=1}^\infty U_nx_n}=\left(\sum_{n=1}^\infty\norm{U_nx_n}^p\right)^{1/p}\leq 2\left(\sum_{n=1}^\infty\norm{x_n}^p\right)^{1/p}=2\norm{\bigoplus_{n=1}^\infty x_n}.\]
It follows, as claimed, that $U$ is bounded on finite $\oplus_{\ell_p}$-sums, and hence is a well-defined bounded operator on the whole space.

Observe that we now have $\theta I_{2,p,0}=\widetilde{J}RU$, which completes the proof.\end{proof}

Let us now show how to deduce the new diagrams above for $\mathcal{L}(\ell_p,c_0)$ and $\mathcal{L}(\ell_1,\ell_q)$, $1<p<2<q<\infty$.  We don't need to prove the first two arrows in each diagram, since they are already known.  Each third and fourth arrows follow from Theorem \ref{main1-utility} together with the fact that the operators $I_{\bv,\infty}$ are all $\ell_2$-factorable.  The rest of the diagram for $\mathcal{L}(\ell_1,\ell_q)$ follows from Corollary \ref{general-diagram}.  Now let $T$ be a $(p,0)$-left Pe\l czy\'{n}ski operator, which we have already observed is not $\mathcal{FSS}$.  Thus, by Theorem \ref{LPDO} together with the fact that $T$ is $\ell_2$-factorable, $[\mathcal{G}_T](\ell_p,c_0)$ is an immediate successor to $\mathcal{FSS}\cap[\mathcal{G}_{\ell_2}](\ell_p,c_0)$, and $[\mathcal{FSS}+\mathcal{G}_T](\ell_p,c_0)$ is the only immediate successor to $\mathcal{FSS}(\ell_p,c_0)$.  Theorem \ref{Hadamard} together with the fact that $T$ is $\ell_2$-factorable but not $\mathcal{FSS}$ shows us that $\mathcal{FSS}(\ell_p,c_0)$ and $[\mathcal{G}_T](\ell_p,c_0)$ are incomparable, i.e. neither one is a subset of the other.  It is also clear from these facts that $\mathcal{FSS}\cap[\mathcal{G}_{\ell_2}](\ell_p,c_0)$ is a proper subset of $\mathcal{FSS}(\ell_p,c_0)$ and $[\mathcal{G}_T](\ell_p,c_0)$ is a proper subset of $[\mathcal{FSS}+\mathcal{G}_T](\ell_p,c_0)$.  Thus, the new diagrams are proved.

\end{document}